\renewcommand{\@thesubfigure}{\hskip\subfiglabelskip}
\def\@begintheorem#1#2{\par\bgroup{\scshape #1\ #2. }\it\ignorespaces}
\def\@opargbegintheorem#1#2#3{\par\bgroup%
   {\scshape #1\ #2\ ({\upshape #3}). }\it\ignorespaces}
\def\@endtheorem{\egroup}
  \newtheorem{theorem}{Theorem}
  \newtheorem{lemma}[theorem]{Lemma}
  \newtheorem{corollary}[theorem]{Corollary}
  \newtheorem{proposition}[theorem]{Proposition}
  \newtheorem{definition}[theorem]{Definition}
  \newtheorem{remark}[theorem]{Remark}
  \newtheorem{example}[theorem]{Example}
  \newtheorem{theorem}{Theorem}[section]
  \newtheorem{lemma}[theorem]{Lemma}
  \newtheorem{definition}[theorem]{Definition}
  \newtheorem{remark}[theorem]{Remark}
\title{Homotopy Methods for Eigenvector-Dependent Nonlinear Eigenvalue Problems
\thanks{The research was supported in part by
the National Natural Science Foundation of China (11971092)}}
\author{Xuping Zhang\thanks{School of Mathematical Sciences,
Dalian University of Technology, Dalian, Liaoning 116025, P. R.
China ({\tt zhangxp@dlut.edu.cn}).}
        \and Haimei Huo\thanks{School of Mathematical Sciences,
Dalian University of Technology, Dalian, Liaoning 116025, P. R.
China ({\tt ab1234@mail.dlut.edu.cn}).}}
\begin{document}

\maketitle

\begin{abstract}
Eigenvector-dependent nonlinear eigenvalue problems are considered which arise from the finite difference discretizations of the Gross-Pitaevskii equation. Existence and uniqueness of positive eigenvector for both one and two dimensional cases and existence of antisymmetric eigenvector for one dimensional case are proved. In order to compute eigenpairs corresponding to excited states as well as ground state, homotopies for both one and two dimensional problems are constructed respectively and the homotopy paths are proved to be regular and bounded. Numerical results are presented to verify the theories derived for both one and two dimensional problems.
\end{abstract}

%

{\bf Key Words} eigenvector-dependent nonlinear eigenvalue problem, Gross-Pitaevskii equation, homotopy continuation method

{\bf Subject Classification(AMS):}65H17, 65H20, 65N06, 65N25

\pagestyle{myheadings} \thispagestyle{plain} \markboth{XUPING ZHANG,
HAIMEI HUO}{HOMOTOPY METHODS FOR NONLINEAR EIGENVALUE PROBLEM}

\section{Introduction }
In this paper, we are concerned with the eigenvector-dependent nonlinear eigenvalue problems resulting from the finite difference discretizations of the Gross-Pitaevskii equation (GPE) describing Bose-Einstein condensates (BEC). BEC are clouds of ultracold alkali-metal atoms or molecules that occupy a single quantum state \cite{BT,CCJ}. The properties of a BEC at temperature $T$ much smaller than the critical condensation temperature $T_c$ are usually described by the nonlinear Schr\"{o}dinger equation (NLS) for the macroscopic wave function known as the Gross-Pitaevskii equation
\begin {equation} \label{sec 1:continuous nonlinear problem}
\begin{array}{lcl}
                {i \psi_t = -\frac{1}{2}\Delta \psi + V(x) \psi + \beta \ |\psi|^2 \psi},&& {t > 0 ,~x \in \Omega},\\
                {\psi(x,t) = 0},&& {t \ge 0,~x \in \partial \Omega},\end{array}\\
\end{equation}
where $ \psi = \psi(x,t)$ is the macroscopic wave function of the BEC, $V(x) = \frac{1}{2}(x_1^2+x_2^2+\cdots+x_N^2)$ is a typical trapping potential, $\Omega$ is a bounded domain in $\mathbb{R}^N$, $N \le 3$, and $\beta $ positive or negative corresponds to the defocusing or focusing NLS. Two important invariants of GPE are the
normalization of the wave function
\begin{equation}  \label{sec 1:constraint condition}
\begin{array}{lcl}
N(\psi) = \int_\Omega |\psi(x,t)|^2 dx = 1,&& t\ge 0,
\end{array}
\end{equation}
and the energy
\begin{align}  \label{sec 1:the-energy}
E(\psi(x,t)) = \int_\Omega \left[ \frac{1}{2}|\nabla \psi|^2 + V(x)|\psi|^2 + \frac{\beta}{2}|\psi|^4 \right] dx = E(\psi(x,0)).
\end{align}
To find stationary solution of (\ref{sec 1:continuous nonlinear problem}), we substitute the formula $\psi(x,t) = e^{-i \lambda t }\phi(x)$ into (\ref{sec 1:continuous nonlinear problem}) and (\ref{sec 1:constraint condition}) and obtain the time-independent Schr\"{o}dinger equation with Dirichlet boundary condition and the normalized condition
\begin{align}
\label{sec 1: transformed nonlinear problem}
\lambda\phi(x) & = -\frac{1}{2}\Delta \phi(x) + V(x)\phi(x) +\beta \phi^3(x),\quad {x\in \Omega},\\
\phi(x) & = 0, \quad x\in \partial \Omega,\\
\label{sec 1:transformed constraint}
\int_\Omega |\phi(x)|^2 dx & = 1,
\end{align}
where $\lambda$ is the chemical potential of the condensate and $\phi(x)$ is a real function independent of $t$ \cite{BD}. (\ref{sec 1: transformed nonlinear problem})-(\ref{sec 1:transformed constraint}) is a nonlinear eigenvalue problem. The eigenfunction corresponding to the minimum energy is called ground state and other eigenfunctions corresponding to larger energy are called excited states in the literature.

There have been many theoretical studies as well as numerical studies for the time-independent Schr\"{o}dinger equation. Bao and Cai \cite{B} pointed out when $\beta>0$, the positive ground state is unique, and if $V(x)$ is radially symmetric in 2D, the positive ground state must be radially symmetric. Bao and Tang \cite{BT} proposed methods by directly minimizing the energy functional via finite element approximation to obtain the ground state and by continuation method to obtain excited states. Edwards and Burnett \cite{MK} presented a Runge-Kutta type method and employed it to solve the spherically symmetric time-independent GPE. Adhikari \cite{A} used this approach to get the ground state solution of GPE in 2D with radial symmetry. Chang and Chien \cite{CC} and Chang, Chien and Jeng \cite{SCB} investigated stationary state solutions of $(\ref{sec 1: transformed nonlinear problem})$ using numerical continuation method, where $\lambda$ was treated as a continuation parameter. The solution curves branching from the first few bifurcation points of $(\ref{sec 1: transformed nonlinear problem})$ were numerically traced using continuation method under the normalization condition $(\ref{sec 1:transformed constraint})$.

Since nonlinearity rather than discretization method is our main concern and finite difference discretization will lead to a simpler nonlinear structure, finite difference discretization is adopted in this paper. The finite difference discretization of (\ref{sec 1: transformed nonlinear problem})-(\ref{sec 1:transformed constraint}) is the following eigenvector-dependent nonlinear eigenvalue problem,
\begin{equation}\label{sec 1: discretize nonlinear problem}
\begin{array}{c}
D\varphi + \beta \varphi^3 = \lambda\varphi,\\
h\varphi^{\mathrm{T}}\varphi - 1 = 0,
\end{array}
\end{equation}
where $D = \frac{1}{2}D_1 + V$, $D_1$ is the coefficient matrix corresponding to $-\Delta$, $V$ is the diagonal matrix corresponding to the potential $V(x)$, $\lambda$ and $\varphi$ are the unknowns, and $h$ is a constant related to mesh size. $\varphi^3$ represents the vector with elements being the corresponding elements of $\varphi$ to the power 3. This convention will be used throughout this paper.

With respect to the theoretical aspects of eigenvector-dependent nonlinear eigenvalue problem, \cite{CKMe} and \cite{CKM} studied the following general nonlinear eigen-value problem
\begin{equation}\label{6}
Ax + F(x) = \lambda x,
\end{equation}
where $A$ is an $n\times n$ irreducible Stieltjes matrix, i.e, an irreducible symmetric positive definite matrix with off-diagonal entries nonpositive, $F(x) = (f_1(x_1),\ldots,f_n(x_n))^\mathrm{T}$ and $x=(x_1,\cdots,x_n)^\mathrm{T}$. The functions $f_i(x_i)$ are assumed to have the property that $f_i(x_i) > 0$, when $x_i > 0$, $i=1,\ldots,n$. It is shown that under certain conditions on $F(x)$, there exists a positive eigenvector $x(\lambda)$ if and only if $\lambda > \mu $, where $ \mu $ is the smallest eigenvalue of $A$, and for every $\lambda > \mu $, the positive eigenvector is unique. Moreover, such a solution is a monotone increasing function of $\lambda$. The most popular numerical method to the eigenvector-dependent nonlinear eigenvalue problems is the self-consistent field (SCF) iteration, which is suitable for computing the ground state; for instance, see \cite{YLZ,SCF2} and the references therein. In \cite{JES}, inverse iteration method was applied to solve eigenvector-dependent nonlinear eigenvalue problems. Most of the above papers concentrate on the ground state and the first excited state. As far as we know, there are only a few numerical works on other excited states, such as \cite{BT,Yao-Zhou,Yang-Huang-Liu,Xie}. The main purpose of this paper is to design algorithms for computing excited states of high energy.

Homotopy method is one of the effective methods for solving eigenvalue problems. A great advantage of the homotopy method is that it is to a large degree parallel, in the sense that each eigenpath is traced independently of the others. There are several works on homotopy methods for linear eigenvalue problems. Remarkable numerical results have been obtained by using homotopy algorithm on eigenvalue problems of tridiagonal symmetric matrices \cite{TYA,TZS}. Solving eigenvalue problems of real nonsymmetric matrices with real homotopy was developed in \cite{LZC,Lui-Keller-Kwok}. The homotopy method is also used to solve the generalized eigenvalue problem \cite{TAT}. For eigenvalue-dependent nonlinear eigen-problems such as $\lambda$-matrix problems, a homotopy was given by Chu, Li and Sauer \cite{CT}.

The major part of this paper is the construction of homotopy for computing many eigenpairs of the eigenvector-dependent nonlinear eigen-problem. Key issues encountered in constructing the homotopy are the selection of the homotopy parameter and that of an appropriate initial eigenvalue problem so that the homotopy paths determined by the homotopy equation are regular and the numerical work in following these paths is at reasonable cost. The parameter $\beta$ in the original problem seems to be a natural choice for the homotopy parameter. However, it seems difficult to prove that 0 is a regular value for such homotopy. In fact, 0 is probably not a regular value of the natural homotopy with parameter $\beta$. Instead, an artificial parameter $t$ is chosen as the homotopy parameter to connect a constructed initial eigenvalue problem and the target one. As for the selection of an initial eigenvalue problem, random matrix with certain sparse structure is designed, which guarantees that 0 is a regular value of the homopoty with probability one and which renders the initial problem and the target problem possess similar structures.

The rest of this paper is organized as follows. In Section 2, the time-independent GPE Dirichlet problem (\ref{sec 1: transformed nonlinear problem})-(\ref{sec 1:transformed constraint}) is discretized by finite difference method and existence of certain types of solution of the discretized problems is derived. In Section 3, homotopies for (\ref{sec 1: discretize nonlinear problem}) are constructed with $\Omega\subset \mathbb{R}$ and $\Omega\subset \mathbb{R}^2$ respectively, and regularity and boundedness of the homotopy paths are proved. In Section 4, numerical results are presented to verify the theoretical results derived for $\Omega\subset \mathbb{R}$ and $\Omega\subset \mathbb{R}^2$ respectively. Conclusions are drawn in the last section.

\section{Discretizations of the nonlinear eigenvalue problem}

\subsection{Finite difference discretizations}
For one dimensional problem (\ref{sec 1: transformed nonlinear problem})-(\ref{sec 1:transformed constraint}) with $\Omega=[a,b]\subset \mathbb{R}$, the grid points are $x_j = a + jh$, $j = 0,\ldots, n+1$, where
$n\in \mathbb{N}^{+}$ and $h=\frac{b-a}{n+1}$ is the mesh size. The finite difference discretization of the differential equation and a simple quadrature of the normalization condition lead to the following system of algebraic equations,
\begin{equation}\label{sec 2: finite difference - one dimen}
\begin{array}{c}
D\varphi + \beta \varphi^3 - \lambda\varphi = 0,\\
\frac{1}{2} \left( \frac{1}{h} - \varphi^{\mathrm{T}}\varphi \right) = 0,
\end{array}
\end{equation}
where $\varphi=(\varphi_1,\cdots,\varphi_n)^{\mathrm{T}}$, $\varphi_j$ are the approximations of $\phi(x_j)$, $v_j = V(x_j)$, $j = 1,\ldots,n$, and $D = \frac{1}{2}D_1 + V$ with
\begin{align}
D_1=
\frac{1}{h^2}\left(
\begin{array}{cccc}
               2&-1&\\
                -1&2&\ddots&\\
           &\ddots&\ddots&-1\\
               &&-1&2
\end{array}\right), \quad
V=\left(
\begin{array}{ccc}
     v_1&&\\
     &\ddots&\\
     &&v_n
\end{array}\right).
\end{align}
The discretization of the normalized condition is rewritten so that the Jacobian matrix of the nonlinear mapping with respect to $(\varphi,\lambda)$ is symmetric, as will be seen below.

For two dimensional problem (\ref{sec 1: transformed nonlinear problem})-(\ref{sec 1:transformed constraint}) with $\Omega=[a,b]\times [c,d]$, the domain is divided into a $(m+1)\times(n+1)$ mesh with step size $h_1 = \frac{b-a}{m+1}$ in $x$-direction, $h_2 = \frac{d-c}{n+1}$ in $y$-direction. The grid points $(x_i,y_j)$ are
$x_i = a + ih_1$, $i = 0,\ldots,m+1$, and $y_j = c + jh_2$, $j = 0,\ldots,n+1$. Using central difference, we get
\begin{equation}\label{sec 2: finite difference - two dimen}
\begin{array}{c}
D\varphi + \beta \varphi^3 - \lambda\varphi = 0,\\
\frac{1}{2} \left( \frac{1}{h_1 h_2} - \varphi^{\mathrm{T}}\varphi \right) = 0,
\end{array}
\end{equation}
where $\varphi=(\varphi_{11},\cdots,\varphi_{1n},\cdots,\varphi_{m1},\cdots,\varphi_{mn})^{\mathrm{T}}$, $\varphi_{ij}$ are the approximations of $\phi(x_i,y_j)$, $v_{ij} = V(x_i,y_j)$, $i=1,\ldots,m$, $j = 1,\ldots,n$ and D is a block tridiagonal matrix
\begin{equation*}
D=
 \setlength{\arraycolsep}{2.5pt}
 \renewcommand{\arraystretch}{0.8}
\frac{1}{2}\left(
\begin{array}{cccc}
      D_{11} & D_{12} &\\
      D_{21} & D_{22} &\ddots&\\
      &\ddots&\ddots& D_{{m-1}, m}\\
     && D_{m, {m-1}} & D_{mm}
\end{array}
\right)
+
\left(
\begin{array}{cccc}
      V_{1} & &\\
      & V_{2}&&\\
      &&\ddots&\\
     &&&V_{m}
\end{array}
\right),
\end{equation*}
where
\begin{align}
D_{i i}=
\left(
\begin{array}{cccc}
    \frac{2}{h_1^2}+ \frac{2}{h_2^2}&-\frac{1}{h_2^2}&\\
     -\frac{1}{h_2^2}&\frac{2}{h_1^2}+\frac{2}{h_2^2}&\ddots&\\
      &\ddots&\ddots&-\frac{1}{h_2^2}\\
     &&-\frac{1}{h_2^2}& \frac{2}{h_1^2}+\frac{2}{h_2^2}
\end{array}\right) \in \mathbb{R}^{n \times n},
\end{align}

\begin{align}
D_{i-1, i}=
\left(
\begin{array}{cccc}
            -\frac{1}{h_1^2}&&&\\
            &-\frac{1}{h_1^2}&&\\
            &&\ddots&\\
            &&&-\frac{1}{h_1^2}
\end{array}\right)\in \mathbb{R}^{n \times n}, \quad D_{{i-1}, i}=D_{i, {i-1}},
\end{align}

\begin{align}
V_{i}=
\left(
\begin{array}{cccc}
            v_{i1}&&&\\
            &v_{i2}&&\\
            &&\ddots&\\
            &&&v_{in}
\end{array}\right), \quad i = 1,\ldots,m.
\end{align}

\begin{remark} \label{sec 2: differential-matrix-positive-definite-1D2D}
For both one and two dimensional cases, D is an irreducible symmetric diagonal dominant matrix and the diagonal entries of D are all positive. Therefore D is positive definite.
\end{remark}

\subsection{Existence of certain types of solution}

In this subsection, we will study the existence of solution for the discretized nonlinear eigenvalue problem. From \cite{B}, we know for (\ref{sec 1: transformed nonlinear problem})-(\ref{sec 1:transformed constraint}), when $\beta>0$, the positive ground state is unique, and if $V(x)$ is radially symmetric in 2D, the positive ground state must be radially symmetric. We will prove the existence of positive solution and the existence of antisymmetric solution for discretized nonlinear eigenvalue problem. For convenient reading, two underlying theorems from \cite{CKMe} are quoted as underlying lemmas.

\begin{lemma}(\cite{CKMe})
\label{sec 2: existence of solution}
Let A be an irreducible Stieltjes matrix and $\mu$ be the smallest positive eigenvalue of A. Let $\lambda > \mu$ and let
\begin {equation}
F(x) = \left(\begin{array}{c}
       f_1(x_1)\\
       \vdots \\
       f_n(x_n)\end{array}\right),
\end{equation}
where for $i = 1,\ldots,n$, $f_i(x):[0,\infty)\rightarrow[0,\infty)$ are $C^1$ functions satisfying the conditions:
\begin{equation}\label{sec 2: limit conditon}
\lim\limits_{t\rightarrow 0 }\frac{f_i(t)}{t} = 0,\qquad \lim\limits_{t\rightarrow \infty }\frac{f_i(t)}{t} = \infty.
\end{equation}
Then $Ax + F(x) = \lambda x$ has a positive solution. If, in addition, for $i = 1,\ldots,n$,
\begin{equation} \label{sec 2: inequality condition}
\frac{f_i(s)}{s} < \frac{f_i(t)}{t}
\end{equation}
whenever $0< s < t$, then the solution is unique.
\end{lemma}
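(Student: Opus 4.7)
To prove the lemma I would separate existence from uniqueness. The plan for existence is to construct a sub-solution and a super-solution from the positive Perron eigenvector of $A$ and to run a monotone iteration between them; the plan for uniqueness is a maximum-ratio argument that leverages the off-diagonal sign pattern of $A$ together with the strict monotonicity of $s\mapsto f_i(s)/s$.

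For existence, since $A$ is an irreducible Stieltjes matrix, $A^{-1}$ is entrywise positive, so Perron--Frobenius produces a strictly positive eigenvector $u$ with $Au=\mu u$. I would try $\underline{x}=\epsilon u$ as a sub-solution: componentwise the requirement $A\underline{x}+F(\underline{x})\le \lambda \underline{x}$ reduces to $f_i(\epsilon u_i)/(\epsilon u_i)\le \lambda-\mu$, which holds for all $\epsilon>0$ small by $\lim_{t\to 0}f_i(t)/t=0$. Symmetrically, $\bar{x}=Ru$ is a super-solution for $R$ large by $\lim_{t\to\infty}f_i(t)/t=\infty$. Pick $M>0$ so large that $x\mapsto (\lambda+M)x-F(x)$ is componentwise monotone nondecreasing on $[\underline{x},\bar{x}]$; this is possible because $F$ is $C^1$ and the interval is compact, and since $F$ is diagonal the only condition is $M+\lambda\ge \max_i\sup_{t\in[\epsilon u_i,Ru_i]}f_i'(t)$. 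Then the operator $T(x):=(A+MI)^{-1}\bigl[(\lambda+M)x-F(x)\bigr]$ is monotone, because $(A+MI)^{-1}$ is entrywise positive as the inverse of a non-singular M-matrix, and its fixed points coincide with solutions of $Ax+F(x)=\lambda x$. Since $T(\underline{x})\ge \underline{x}$ and $T(\bar{x})\le \bar{x}$, iteration starting from $\underline{x}$ yields an increasing sequence confined to $[\underline{x},\bar{x}]$ which converges to a positive fixed point $x^{\ast}$.

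For uniqueness under the extra hypothesis, suppose $x,y$ are two positive solutions, set $\alpha=\max_i x_i/y_i$, and choose an index $i_0$ attaining the maximum; after possibly swapping $x$ and $y$ we may assume $\alpha\ge 1$. Using $a_{i_0 j}\le 0$ for $j\ne i_0$ together with $x_j\le \alpha y_j$ gives $(Ax)_{i_0}\ge \alpha(Ay)_{i_0}$. Substituting the two equations and cancelling the $\lambda\alpha y_{i_0}$ terms yields $f_{i_0}(\alpha y_{i_0})\le \alpha f_{i_0}(y_{i_0})$, equivalently $f_{i_0}(\alpha y_{i_0})/(\alpha y_{i_0})\le f_{i_0}(y_{i_0})/y_{i_0}$; the strict monotonicity (\ref{sec 2: inequality condition}) then forces $\alpha\le 1$, hence $\alpha=1$, so $x_i\le y_i$. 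Swapping the roles of $x$ and $y$ gives the reverse inequality, so $x=y$.

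The main obstacle is the existence step, specifically coordinating the three ingredients: one must choose $M$ large enough to make the auxiliary map monotone on $[\underline{x},\bar{x}]$, simultaneously verify that $T$ sends this order interval into itself, and confirm that the limit of the iteration stays strictly positive rather than collapsing to the zero vector (here the sub-solution bound $\underline{x}=\epsilon u>0$ is essential). By contrast the uniqueness argument is essentially algebraic once one notices that the off-diagonal sign pattern of $A$ combined with the choice of $i_0$ delivers the one-sided estimate $(Ax)_{i_0}\ge \alpha(Ay)_{i_0}$; this is the only place where the Stieltjes structure enters, with everything else following from the scalar inequality for $f_i(s)/s$.
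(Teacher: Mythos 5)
Your proposal is correct, but note that the paper itself offers no proof to compare against: Lemma \ref{sec 2: existence of solution} is quoted verbatim from \cite{CKMe} as a known result, so the authors simply cite it. What you have written is a complete, self-contained argument in the standard sub/super-solution style. The existence half is sound: irreducibility plus the Stieltjes structure gives $A^{-1}>0$, Perron--Frobenius supplies $u>0$ with $Au=\mu u$, and the two limit conditions in (\ref{sec 2: limit conditon}) make $\epsilon u$ and $Ru$ a sub- and super-solution for $\epsilon$ small and $R$ large; the shift by $M$ works because $A+MI$ is again an irreducible Stieltjes matrix (same nonpositive off-diagonal pattern, still positive definite), so $(A+MI)^{-1}>0$ and the iteration operator $T$ is order-preserving on $[\epsilon u,Ru]$, with $C^1$-smoothness of the $f_i$ guaranteeing a finite bound on $f_i'$ over the compact order interval; the increasing iterates then converge to a fixed point bounded below by $\epsilon u>0$. (Implicitly you use $f_i(0)=0$, which indeed follows from the first limit condition, so the sub-solution inequality is legitimate.) The uniqueness half via the maximum ratio $\alpha=\max_i x_i/y_i$ is also correct: at a maximizing index the sign pattern $a_{i_0 j}\le 0$ yields $(Ax)_{i_0}\ge\alpha(Ay)_{i_0}$, and the resulting inequality $f_{i_0}(\alpha y_{i_0})/(\alpha y_{i_0})\le f_{i_0}(y_{i_0})/y_{i_0}$ contradicts (\ref{sec 2: inequality condition}) whenever $\alpha>1$, so $x\le y$, and symmetry finishes. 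Your ``after possibly swapping assume $\alpha\ge 1$'' step is redundant --- the argument already shows $\alpha\le 1$ unconditionally --- but it is harmless. This matches in spirit the monotone-iteration framework of the cited reference \cite{CKMe}, whose whole point is monotone convergence for this eigenproblem, so your route is the natural one even though the paper under review does not reproduce it.
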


\begin{lemma}(\cite{CKMe})
\label{sec2: normalization solution}
Let the conditions $(\ref{sec 2: limit conditon})$ and $(\ref{sec 2: inequality condition})$ of Lemma $\ref{sec 2: existence of solution}$ be satisfied and let $x(\lambda)$ denote the unique positive eigenvector corresponding
to $\lambda \in (\mu,\infty)$. Then:
\begin{enumerate}[(i)]
\item $x(\lambda_1) < x(\lambda_2)$, if $\mu < \lambda_1 < \lambda_2 < \infty$;
\item $x(\lambda)$ is continuous on $(\mu,\infty)$;
\item $\lim\limits_{\lambda \rightarrow \infty }{x_i(\lambda)} = \infty$, $i=1,\cdots,n$;
\item $\lim\limits_{\lambda \rightarrow \mu^{+} }{x_i(\lambda)} = 0$, $i=1,\cdots,n$.
\end{enumerate}
\end{lemma}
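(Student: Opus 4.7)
The plan is to prove the four items in order, using the first as the workhorse. For the monotonicity statement (i) I would argue by direct comparison rather than by differentiating. Suppose $\mu < \lambda_1 < \lambda_2$ and let $\beta := \min_i x_i(\lambda_2)/x_i(\lambda_1)$, attained at some index $j$. The inequalities $A_{ji} \leq 0$ for $i\ne j$ and $x_i(\lambda_2) \geq \beta x_i(\lambda_1)$ give $(Ax(\lambda_2))_j \leq \beta (Ax(\lambda_1))_j$. Substituting this into the $j$-th component of the $\lambda_2$ eigenequation, replacing $(Ax(\lambda_1))_j$ by $\lambda_1 x_j(\lambda_1) - f_j(x_j(\lambda_1))$ from the $\lambda_1$ eigenequation, and dividing by $\beta x_j(\lambda_1)$, yields
\[
\lambda_1 + g_j(\beta x_j(\lambda_1)) - g_j(x_j(\lambda_1)) \geq \lambda_2,
\]
where $g_j(t) := f_j(t)/t$. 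Hypothesis $(\ref{sec 2: inequality condition})$ says $g_j$ is strictly increasing, so if $\beta \leq 1$ the left-hand side is at most $\lambda_1 < \lambda_2$, a contradiction. Hence $\beta > 1$, i.e., $x(\lambda_1) < x(\lambda_2)$ componentwise.

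For continuity (ii), fix $\lambda_0 \in (\mu,\infty)$ and a sequence $\lambda_n \to \lambda_0$. By (i) the vectors $x(\lambda_n)$ are sandwiched componentwise between $x(\lambda_0-\epsilon)$ and $x(\lambda_0+\epsilon)$ for large $n$, so they are bounded above and bounded below by a positive vector. Any subsequential limit $y$ satisfies $Ay + F(y) = \lambda_0 y$ with $y > 0$, and the uniqueness part of Lemma $\ref{sec 2: existence of solution}$ forces $y = x(\lambda_0)$, proving $x(\lambda_n) \to x(\lambda_0)$.

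For the behaviour at infinity (iii), monotonicity from (i) lets each $x_i(\lambda)$ converge to a limit in $(0,\infty]$ as $\lambda \to \infty$. If $x_{i_0}(\lambda)$ stayed bounded for some $i_0$, the $i_0$-th row of $Ax + F(x) = \lambda x$ combined with the bound $(Ax(\lambda))_{i_0} \leq A_{i_0 i_0} x_{i_0}(\lambda)$ (nonpositive off-diagonal entries of $A$ acting on a positive $x$) would give $\lambda x_{i_0}(\lambda) \leq A_{i_0 i_0} x_{i_0}(\lambda) + f_{i_0}(x_{i_0}(\lambda))$, a bounded upper bound on a quantity that tends to infinity. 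For part (iv), monotonicity again produces a componentwise limit $x^* \geq 0$ as $\lambda \to \mu^+$. Since $\lim_{t\to 0}f_i(t)/t = 0$ together with $f_i \in C^1$ forces $f_i(0)=0$, passing to the limit gives $Ax^* + F(x^*) = \mu x^*$. A standard argument using irreducibility of $A$ (any zero component would force zero at every neighbour, hence everywhere) shows $x^*$ is either $0$ or strictly positive; in the latter case, pairing the limit equation with the Perron eigenvector $v>0$ of $A$ at $\mu$ and using $v^{\mathrm T}A = \mu v^{\mathrm T}$ gives $v^{\mathrm T} F(x^*) = 0$, contradicting $v>0$ componentwise and $f_i(x_i^*)>0$ for every $i$. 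Hence $x^*=0$.

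The main obstacle throughout is the scalar comparison in (i): one has to locate the extremal index, exploit the sign structure of $A$ to bound $(Ax(\lambda_2))_j$ by a multiple of $(Ax(\lambda_1))_j$, and then massage the resulting inequality into a form where the strict monotonicity of $g_j$ closes the argument. Once (i) is in hand, parts (ii)--(iv) reduce to routine monotone-limit and Perron--Frobenius considerations.
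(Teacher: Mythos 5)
The paper does not actually prove this lemma: it is quoted verbatim from \cite{CKMe} as a known result, so there is no in-paper argument to compare yours against. Judged on its own, your proof is correct and self-contained. The minimum-ratio comparison in (i) is sound: with $\beta=\min_i x_i(\lambda_2)/x_i(\lambda_1)$ attained at $j$, the sign structure of the off-diagonal entries together with the equality $x_j(\lambda_2)=\beta x_j(\lambda_1)$ gives $(Ax(\lambda_2))_j\le\beta (Ax(\lambda_1))_j$, and the strict monotonicity of $g_j(t)=f_j(t)/t$ from $(\ref{sec 2: inequality condition})$ rules out $\beta\le 1$; parts (ii)--(iv) then follow by the standard sandwich/compactness, divergence, and Perron--Frobenius arguments you give. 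Two facts you use implicitly deserve a sentence each: first, $f_i(x_i^*)>0$ whenever $x_i^*>0$ is not an explicit hypothesis of Lemma \ref{sec 2: existence of solution}, but it does follow from $(\ref{sec 2: inequality condition})$ together with $f_i\ge 0$ (if $f_i(t_0)=0$ for some $t_0>0$, then $f_i(s)/s<0$ for $0<s<t_0$, impossible); second, the existence of a strictly positive eigenvector $v$ with $Av=\mu v$ needs the observation that $A$ is an irreducible Stieltjes matrix, so $sI-A$ is nonnegative and irreducible for large $s$ and Perron--Frobenius applies, with symmetry giving $v^{\mathrm T}A=\mu v^{\mathrm T}$. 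A cosmetic point: rename your ratio $\beta$, since $\beta$ already denotes the nonlinearity coefficient throughout the paper.
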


\begin{remark} \label{sec 2: unique normalization solution}
Lemma $\ref{sec2: normalization solution}$ indicates for any given normalization $r > 0$, there exist a unique $\lambda > \mu$ and unique positive $x(\lambda)$
such that $||x(\lambda)|| = r$.
\end{remark}

\begin{theorem}\label{sec 2: thm-positive-eigenvector}
If $\beta > 0$, there exist unique positive eigenvectors for problem $(\ref{sec 2: finite difference - one dimen})$ and $(\ref{sec 2: finite difference - two dimen})$ respectively.
\end{theorem}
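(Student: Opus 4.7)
The plan is to reduce both discretized systems to the hypotheses of Lemma \ref{sec 2: existence of solution} and Lemma \ref{sec2: normalization solution}, then use the normalization condition to pin down the eigenvalue. I would proceed in four steps.

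First I would verify that the matrix $D$ arising in both (\ref{sec 2: finite difference - one dimen}) and (\ref{sec 2: finite difference - two dimen}) is an irreducible Stieltjes matrix. Symmetry is immediate from the construction; irreducibility and positive definiteness are already recorded in Remark \ref{sec 2: differential-matrix-positive-definite-1D2D}. What remains is to inspect the off-diagonal entries: in 1D they are $-\frac{1}{2h^2}\le 0$, and in 2D the off-diagonal entries of the blocks $\frac{1}{2}D_{ii}$ and $\frac{1}{2}D_{i\pm 1,i}$ are $-\frac{1}{2h_2^2}$ and $-\frac{1}{2h_1^2}$ respectively, all nonpositive. The diagonal potential term $V$ (or the $V_i$ blocks) only affects the diagonal. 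Hence $D$ is Stieltjes.

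Second, I would set $f_i(t)=\beta t^3$ for $i=1,\dots,n$ (or $i=1,\dots,mn$ in the 2D case). For $\beta>0$ these are $C^1$ functions from $[0,\infty)$ to $[0,\infty)$, and they satisfy the limit conditions (\ref{sec 2: limit conditon}) since $f_i(t)/t=\beta t^2\to 0$ as $t\to 0^+$ and $\to\infty$ as $t\to\infty$, as well as the strict monotonicity (\ref{sec 2: inequality condition}) since $\beta s^2<\beta t^2$ whenever $0<s<t$. Thus the hypotheses of Lemma \ref{sec 2: existence of solution} are met, so for every $\lambda>\mu$ (with $\mu$ the smallest eigenvalue of $D$) the equation $D\varphi+\beta\varphi^3=\lambda\varphi$ has a unique positive solution $\varphi(\lambda)$.

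Third, I would invoke Lemma \ref{sec2: normalization solution}, which guarantees that the map $\lambda\mapsto\varphi(\lambda)$ is componentwise continuous and strictly increasing on $(\mu,\infty)$, with componentwise limits $0$ at $\mu^+$ and $\infty$ at $\infty$. Consequently $\lambda\mapsto\varphi(\lambda)^{\mathrm T}\varphi(\lambda)$ is a continuous, strictly increasing surjection of $(\mu,\infty)$ onto $(0,\infty)$, as noted in Remark \ref{sec 2: unique normalization solution}. Choosing the prescribed normalization value $1/h$ in the 1D case (or $1/(h_1h_2)$ in the 2D case) therefore determines a unique $\lambda^\ast>\mu$ for which $\varphi(\lambda^\ast)$ satisfies the corresponding normalization equation, yielding a unique positive eigenpair for each of (\ref{sec 2: finite difference - one dimen}) and (\ref{sec 2: finite difference - two dimen}).

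The only real work in the argument is step one, where one must carefully recognize both the 1D tridiagonal structure and the 2D block-tridiagonal structure as Stieltjes; once that is in place, steps two through four are essentially a direct translation of the cited lemmas to the cubic nonlinearity $f_i(t)=\beta t^3$ combined with the surjectivity of the norm on positive solution branches.
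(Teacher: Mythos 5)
Your proposal is correct and follows essentially the same route as the paper: verify that $D$ is an irreducible Stieltjes matrix, check that $f_i(t)=\beta t^3$ satisfies the hypotheses of Lemma \ref{sec 2: existence of solution}, and then use Lemma \ref{sec2: normalization solution} (Remark \ref{sec 2: unique normalization solution}) to select the unique $\lambda$ matching the normalization. Your write-up simply spells out the verification steps that the paper leaves implicit.
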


\begin{proof}
From Remark \ref{sec 2: differential-matrix-positive-definite-1D2D}, we know $D$ in both $(\ref{sec 2: finite difference - one dimen})$ and $(\ref{sec 2: finite difference - two dimen})$ is an irreducible Stieltjes matrix. In addition it can be verified that $\beta\varphi^3$ in both $(\ref{sec 2: finite difference - one dimen})$ and $(\ref{sec 2: finite difference - two dimen})$ satisfies the conditions of $F(x)$ in Lemma $\ref{sec 2: existence of solution}$. From Remark \ref{sec 2: unique normalization solution}, the claim is proved.
\end{proof}

\begin{theorem} \label{sec 2: certain solution}
Let $\beta > 0$ and $\Omega=[-a,a]$ for problem $(\ref{sec 2: finite difference - one dimen})$. 
\begin{enumerate}[(i)]
\item When $n$ is odd and the grid points $x_i$, $i=1,\ldots,n$, satisfy
\begin{equation}
\label{sec 2: odd condition}
\nonumber
x_1 = - x_n,~x_2 = - x_{n-1},~\cdots,~x_{\frac{n-1}{2}} = - x_{\frac{n+3}{2}},~x_{\frac{n+1}{2}} = 0,
\end{equation}
there exists a unique solution $\varphi = (\varphi_1,\varphi_2,\cdots,\varphi_n)^{\mathrm{T}}$ with $\varphi_1 = -\varphi_n$, $\varphi_2 = - \varphi_{n-1}, \cdots$, $\varphi_{\frac{n-1}{2}} = - \varphi_{\frac{n+3}{2}}$, $\varphi_{\frac{n+1}{2}} = 0$, $\varphi_j > 0$, $j=1,\ldots,n$.
\item When $n$ is even and the grid points $x_i$, $i=1,\ldots,n$, satisfy
\begin{equation}
\label{sec 2: even condition}
\nonumber
x_1 = - x_n,~x_2 = - x_{n-1},~\cdots,~x_{\frac{n}{2}} = - x_{\frac{n}{2}+1},
\end{equation}
there exists a unique solution $\varphi = (\varphi_1,\varphi_2,\cdots,\varphi_n)^{\mathrm{T}}$ with $\varphi_1 = -\varphi_n$, $\varphi_2 = - \varphi_{n-1}$, $\cdots$, $\varphi_{\frac{n}{2}} = - \varphi_{\frac{n}{2}+1}$, $\varphi_j > 0$, $j=1,\ldots,n$.
\end{enumerate}
\end{theorem}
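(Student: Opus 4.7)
The plan is to exploit the grid symmetry to fold the full $n$-dimensional system into a half-sized one whose hypotheses then match those of Theorem \ref{sec 2: thm-positive-eigenvector} (via Lemma \ref{sec 2: existence of solution} and Remark \ref{sec 2: unique normalization solution}).

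First I would observe that $x_j = -x_{n+1-j}$ together with $V(x)=\tfrac{1}{2}x^2$ forces $v_j = v_{n+1-j}$, so the antisymmetric subspace $\{\varphi : \varphi_{n+1-j} = -\varphi_j\}$ is invariant under $\varphi \mapsto D\varphi + \beta\varphi^3 - \lambda\varphi$: the equations at indices $j$ and $n+1-j$ agree up to sign, and in the odd case the middle equation reduces to $0=0$ because $\varphi_{(n+1)/2}=0$. It is therefore enough to solve the equations on the right half. Writing $\tilde\varphi_i = \varphi_{k+1+i}$ in the odd case $n=2k+1$ and $\tilde\varphi_i = \varphi_{k+i}$ in the even case $n=2k$, for $i=1,\ldots,k$, and substituting the antisymmetry into the rows that couple the two halves, the system reduces to
\[
\tilde D\,\tilde\varphi + \beta\tilde\varphi^3 = \lambda\tilde\varphi, \qquad 2h\,\tilde\varphi^{\mathrm T}\tilde\varphi = 1,
\]
where $\tilde D$ is symmetric tridiagonal with off-diagonals $-\tfrac{1}{2h^2}$ and diagonal $\tfrac{1}{h^2}+v_{k+1+i}$ in the odd case. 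In the even case the first diagonal entry changes to $\tfrac{3}{2h^2}+v_{k+1}$, because $\varphi_k = -\tilde\varphi_1$ contributes $+\tfrac{1}{2h^2}\tilde\varphi_1$ to that row; all other entries are unchanged.

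Next I would check that $\tilde D$ is an irreducible Stieltjes matrix: symmetry and nonpositive off-diagonals are immediate, irreducibility follows from the nonvanishing sub- and super-diagonals, and strict row diagonal dominance holds in every row (the diagonal is at least $\tfrac{1}{h^2} + v_i > 0$ while the off-diagonal sum is at most $\tfrac{1}{h^2}$, with extra slack at the two boundary rows, and still more in the even case thanks to the $\tfrac{3}{2h^2}$ entry). The cubic nonlinearity $\beta\tilde\varphi^3$ satisfies the limit and monotonicity hypotheses \eqref{sec 2: limit conditon}--\eqref{sec 2: inequality condition} of Lemma \ref{sec 2: existence of solution}, and the normalization $h\varphi^{\mathrm T}\varphi = 1$ translates through antisymmetry to $\|\tilde\varphi\|^2 = \tfrac{1}{2h}$.

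Finally I would invoke Remark \ref{sec 2: unique normalization solution} to obtain a unique $\lambda$ and a unique componentwise positive $\tilde\varphi$ satisfying the reduced problem with this norm. Extending by $\varphi_{n+1-j} = -\varphi_j$, and setting $\varphi_{(n+1)/2}=0$ in the odd case, produces the required antisymmetric eigenpair of the original problem, with strictly positive entries on one half and strictly negative entries on the other. Uniqueness within the antisymmetric class is immediate: any second such solution restricts to a positive eigenvector of the reduced problem with norm $1/\sqrt{2h}$, and Remark \ref{sec 2: unique normalization solution} forces it to coincide with $\tilde\varphi$. The only slightly delicate point I anticipate is the bookkeeping of the modified boundary row in the even case, to confirm that irreducibility, sign pattern, and strict diagonal dominance all survive the reduction; once that is established, the remainder is a direct translation of hypotheses into the already-proved Lemma \ref{sec 2: existence of solution}.
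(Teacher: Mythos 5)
Your proposal is correct and follows essentially the same route as the paper: fold the antisymmetric system onto a half-grid (the paper uses the left half, you the right, which is immaterial), note the single modified diagonal entry $\tfrac{3}{2h^2}+v$ in the even case, verify the reduced matrix is an irreducible Stieltjes matrix so Lemma \ref{sec 2: existence of solution} and Remark \ref{sec 2: unique normalization solution} give a unique positive eigenvector with norm $1/\sqrt{2h}$, and extend by antisymmetry. Your explicit check that the middle equation reduces to $0=0$ in the odd case is a detail the paper leaves implicit, but the argument is the same.
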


\begin{proof}
(i). When $n$ is odd, consider the following equations
\begin{equation}
\label{sec2:half-equations-n-odd}
\begin{array}{c}
D_2\varphi + \beta \varphi^3 = \lambda\varphi,\\
\varphi^\mathrm{T} \varphi -\frac{1}{2h} = 0,
\end{array}
\end{equation}
where
\begin{align}
D_2=
\left(\begin{array}{cccc}
      \frac{1}{h^2}+\frac{1}{2}x_1^2&-\frac{1}{2h^2}&&\\
      -\frac{1}{2h^2} &\frac{1}{h^2}+\frac{1}{2}x_2^2& \ddots&\\
       &\ddots&\ddots&-\frac{1}{2h^2}\\
        &&-\frac{1}{2h^2}&\frac{1}{h^2}+\frac{1}{2}x_{\frac{n-1}{2}}^2 \end{array}\right)\in \mathbb{R}^{\frac{n-1}{2} \times \frac{n-1}{2}}.
\end{align}
Note that $D_2$ is an irreducible Stieltjes matrix and $\beta \varphi^3$ satisfies the conditions of $F(x)$ in Lemma $\ref{sec 2: existence of solution}$. Therefore there exists a unique positive solution $\varphi = (\varphi_1,\varphi_2,\ldots,\varphi_{\frac{n-1}{2}})^{\mathrm{T}}$ for (\ref{sec2:half-equations-n-odd}). Due to the relations $x_1 = - x_n,~x_2 = - x_{n-1},~\cdots,~x_{\frac{n-1}{2}} = - x_{\frac{n+3}{2}},~x_{\frac{n+1}{2}} = 0$, set $\varphi_n = -\varphi_1$, $\varphi_{n-1} = - \varphi_2$, $\cdots$,
$\varphi_{\frac{n+3}{2}} = - \varphi_{\frac{n-1}{2}}$, $\varphi_{\frac{n+1}{2}} = 0$. Then $\varphi = (\varphi_1,\varphi_2,\ldots,\varphi_n)^{\mathrm{T}}$ is a solution of $(\ref{sec 2: finite difference - one dimen})$.

(ii). When $n$ is even, consider the following equations
\begin{equation}
\label{sec2:half-equations-n-even}
\begin{array}{c}
D_2\varphi + \beta \varphi^3 = \lambda\varphi,\\
\varphi^{\mathrm{T}}\varphi -\frac{1}{2h} = 0,
\end{array}
\end{equation}
where
\begin{align}
D_2=
\left(\begin{array}{cccc}
      \frac{1}{h^2}+\frac{1}{2}x_1^2&-\frac{1}{2h^2}&&\\
      -\frac{1}{2h^2} &\frac{1}{h^2}+\frac{1}{2}x_2^2&\ddots&\\
        &\ddots&\ddots&-\frac{1}{2h^2}\\
        &&-\frac{1}{2h^2}&\frac{3}{2h^2}+\frac{1}{2}x_{\frac{n}{2}}^2 \end{array}\right) \in \mathbb{R}^{\frac{n}{2} \times \frac{n}{2}}.
\end{align}
Similarly there exists a unique positive solution $\varphi = (\varphi_1,\varphi_2,\ldots,\varphi_{\frac{n}{2}})^{\mathrm{T}}$ for (\ref{sec2:half-equations-n-even}). Due to the relations $x_1 = - x_n,~x_2 = - x_{n-1},~\cdots,~x_{\frac{n}{2}} = - x_{\frac{n}{2}+1}$, set $\varphi_n = - \varphi_1$, $\varphi_{n-1} = - \varphi_2$, $\cdots$, $\varphi_{\frac{n}{2}+1} = - \varphi_{\frac{n}{2}}$. Then $\varphi = (\varphi_1,\varphi_2,\ldots,\varphi_n)^{\mathrm{T}}$ is a solution of $(\ref{sec 2: finite difference - one dimen})$.
\end{proof}

\section{Homotopy methods}
In this section, in order to compute many eigenpairs, we construct homotopy equations  for 1D discretized problem $(\ref{sec 2: finite difference - one dimen})$ and 2D discretized problem $(\ref{sec 2: finite difference - two dimen})$ respectively. We shall prove the regularity and boundedness of the homotopy paths. The regularity of homotopy paths can be usually obtained by random perturbations of appropriate parameters, so the most important feature of our construction is the choice of appropriate parameters. In addition, if the initial matrix can be chosen as close to the matrix $D$ as possible, then most of the homotopy paths are close to straight lines and will be easy to follow \cite{LZC}.

\subsection{One dimensional case}
For $(\ref{sec 2: finite difference - one dimen})$, a homotopy $H:\mathbb{R}^n\times \mathbb{R}\times [0,1] \rightarrow \mathbb{R}^n\times \mathbb{R}$ is defined as
\begin{equation}\label{sec 3: homotopy one dimen}
H(\varphi,\lambda,t)=
\left(
\begin{array}{c}
     (1-t)A(K)\varphi+D\varphi+t\beta\varphi^3-\lambda\varphi\\
      \frac{1}{2} \left( \frac{1}{h} - \varphi^{\mathrm{T}}\varphi \right)
\end{array}
\right)=0,
\end{equation}
where $A(K) = \mbox{diag}(a_1,\cdots,a_n)$ is a random diagonal matrix with $K =(a_1,\cdots,a_n)^{\mathrm{T}} \in \mathbb{R}^n$. At $t = 0$,
$H(\varphi,\lambda,0)$ corresponds to the linear eigenvalue problem
\begin{equation}
H(\varphi,\lambda,0)=
\left(
\begin{array}{c}
      A(K)\varphi+D\varphi-\lambda\varphi\\
      \frac{1}{2} \left( \frac{1}{h} - \varphi^{\mathrm{T}}\varphi \right)
\end{array}
\right)=0,
\end{equation}
while at $t = 1$, $H(\varphi,\lambda,1) = 0$ corresponds to the problem (\ref{sec 2: finite difference - one dimen}). Assuming that the eigenpairs of $H(\varphi,\lambda,0) = 0$ are $(\varphi^{(i)},\lambda_i)$, $i=1,\ldots,n$, $\lambda_1 \le \cdots \le \lambda_n$, we shall use these $n$ points $(\varphi^{(i)},\lambda_i,0)$ as our initial points when tracing the homotopy curves leading to the desired solutions of $H(\varphi,\lambda,1) = 0$.

The choice of the initial matrix $A(K) + D$ provides some advantages. First, it makes sure that $\forall t\in[0,1)$, what we need to solve is a sparse nonlinear eigenvalue problem with constraint. Second, since $\forall K \in \mathbb{R}^n$, $\forall t\in[0,1)$ and $\varphi \in \mathbb{R}^n$, $(1-t)A(K) + D + t\beta \mbox{diag}(\varphi^2)$ is a real symmetric matrix, the solution curves starting from the initial points at $t = 0$ are real. Finally, since $A(K)+ D$ is a tridiagonal matrix with all the subdiagonal and supdiagonal entries nonzero, all the eigenvalues of $A(K)+D$ are simple and the Jacobian matrix of $H$ at $(\varphi_0,\lambda_0,0)$ is nonsingular for $\forall(\varphi_0,\lambda_0)$ such that $H(\varphi_0,\lambda_0,0) = 0$. Thus locally a unique curve around $(\varphi_0,\lambda_0,0)$ is guaranteed.

The effectiveness of the homotopy is based on the following Parametrized Sard's Theorem.

\begin{theorem}(Parametrized Sard's Theorem)\label{sec 3:Parametrized Sard's Theorem}
Let $f:M \times P \subset \mathbb{R}^{m} \times \mathbb{R}^{q} \to \mathbb{R}^{n} $ be a $C^{k}$ mapping with $k > max(0,m-n)$, where $M$ and $P$ are open sets in $\mathbb{R}^{m}$ and $\mathbb{R}^{q}$ respectively. If $y$ is a regular value of f, then
y is also a regular value of $f(\cdot,p)$ for almost all $p \in P$.
\end{theorem}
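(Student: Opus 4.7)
The plan is to reduce the parametrized statement to the classical Sard theorem applied to the projection out of the zero set $f^{-1}(y)$. The hypothesis $k>\max(0,m-n)$ is calibrated precisely so that the classical theorem is available on this projection.

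First I would use that $y$ is a regular value of $f$ to invoke the submersion form of the implicit function theorem. Setting $N:=f^{-1}(y)$, this yields that $N$ is either empty, in which case the conclusion is vacuous, or a $C^k$ submanifold of $M\times P$ of dimension $m+q-n$, whose tangent space at $(x,p)$ is $\ker Df(x,p)$. I would then introduce the $C^k$ projection $\pi:N\to P$, $\pi(x,p)=p$, and apply classical Sard to it. The needed smoothness is $k>\max(0,\dim N-\dim P)=\max(0,m-n)$, exactly the assumption in the statement. Classical Sard then gives that the set of critical values of $\pi$ has Lebesgue measure zero in $P$, so for almost every $p\in P$ every point of $\pi^{-1}(p)$ is a regular point of $\pi$.

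The remaining step translates regularity of $\pi$ at a fiber into regularity of the slice $f(\cdot,p)$. Fix a regular value $p$ of $\pi$ and any $x$ with $f(x,p)=y$. Surjectivity of $d\pi_{(x,p)}$ on $\ker Df(x,p)$ means that for every $v\in\mathbb{R}^q$ there is some $u\in\mathbb{R}^m$ with $D_xf(x,p)u=-D_pf(x,p)v$. Given arbitrary $w\in\mathbb{R}^n$, surjectivity of the full differential $Df(x,p)$ produces $(u_0,v_0)$ with $D_xf(x,p)u_0+D_pf(x,p)v_0=w$; applying the previous observation to this $v_0$ yields $u_1$ with $D_xf(x,p)u_1=-D_pf(x,p)v_0$; subtracting gives $D_xf(x,p)(u_0-u_1)=w$. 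Hence $D_xf(x,p)$ is surjective, so $y$ is a regular value of $f(\cdot,p)$.

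The main obstacle is essentially bookkeeping rather than a genuine difficulty: one must verify the dimension count $\dim N-\dim P=m-n$ so classical Sard applies to $\pi$ under the stated smoothness, and then check the pointwise linear-algebra equivalence above at every point of the fiber simultaneously. With classical Sard taken as a black box, no serious obstruction arises.
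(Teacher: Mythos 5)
The paper does not actually prove this theorem; it states the Parametrized Sard's Theorem without proof as a classical tool (it is a well-known result, e.g.\ in Chow--Mallet-Paret--Yorke and in Allgower--Georg's monograph), so there is no ``paper's proof'' to compare against. Your proof is nonetheless correct and is the standard argument: $f^{-1}(y)$ is a $C^k$ submanifold of dimension $m+q-n$ by the regular-value/preimage theorem, classical Sard applied to the projection $\pi\colon f^{-1}(y)\to P$ needs exactly $k>\max(0,m-n)$, and the linear-algebra step correctly converts surjectivity of $d\pi$ restricted to $\ker Df(x,p)$ together with surjectivity of $Df(x,p)$ into surjectivity of $D_xf(x,p)$. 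One small point worth making explicit: when $m<n$, your argument still goes through because $\pi$ then maps a manifold of dimension $m+q-n<q$ into $P$, so by Sard the image has measure zero, and for almost all $p$ the fiber $\pi^{-1}(p)$ is empty, making the conclusion vacuously true. With that observation folded in, the proof is complete and clean.
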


In the rest of this paper, we will denote the $i$-th row of a matrix $M$ as $M(i,:)$ and the $j$-th column of $M$ as $M(:,j)$. If $I_1$ is a row index set, $M(I_1,:)$ will be the submatrix formed by the $I_1$ rows of $M$. $M(i:end,:)$ will be the submatrix formed by the rows from the $i$-th row to the last. Similarly $M(:,J_1)$ will be the submatrix formed by the $J_1$ columns of $M$. If $I_1$ and $I_2$ are two index sets, $[M(I_1,:);M(I_2,:)]$ denotes the submatrix formed by the $I_1$ rows and the $I_2$ rows of $M$. In Theorem \ref{sec 3: thm-regular one dimen}, we prove regularity and boundedness of the homotopy paths determined by the homotopy equation (\ref{sec 3: homotopy one dimen}).

\begin{theorem}
\label{sec 3: thm-regular one dimen}
For the homotopy $H:\mathbb{R}^n\times \mathbb{R}\times [0,1) \rightarrow \mathbb{R}^n\times \mathbb{R}$ in (\ref{sec 3: homotopy one dimen}), for almost all  $K\in \mathbb{R}^n$,
\begin{enumerate}[(i)]
\item 0 is a regular value of $H$ and therefore the homotopy curves corresponding to different initial points do not intersect each other for $t\in[0,1)$;
\item Every homotopy path $(\varphi(s),\lambda(s),t(s)) \subset H^{-1}(0)$ is bounded.
\end{enumerate}
\end{theorem}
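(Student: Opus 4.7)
The plan is to apply the Parametrized Sard's Theorem (Theorem \ref{sec 3:Parametrized Sard's Theorem}) to $H$ viewed as a smooth map of all four arguments $(\varphi,\lambda,t,K)\in\mathbb{R}^n\times\mathbb{R}\times[0,1)\times\mathbb{R}^n$. Once the full Jacobian is shown to have rank $n+1$ at every zero of $H$, Sard's gives regularity of $0$ for $H(\cdot,\cdot,\cdot\,;K)$ at almost every $K$; the Implicit Function Theorem then identifies $H^{-1}(0)$ with a disjoint union of smooth $1$-manifolds in $\mathbb{R}^n\times\mathbb{R}\times[0,1)$, so paths issued from distinct initial eigenpairs $(\varphi^{(i)},\lambda_i,0)$ cannot meet for $t<1$. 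Part (ii) is a short and independent computation.

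For the rank calculation, differentiating (\ref{sec 3: homotopy one dimen}) gives
\[
DH=\begin{pmatrix} L & -\varphi & -A(K)\varphi+\beta\varphi^{3} & (1-t)\,\mathrm{diag}(\varphi)\\ -\varphi^{\mathrm{T}} & 0 & 0 & 0\end{pmatrix},
\]
with $L=(1-t)A(K)+D+3t\beta\,\mathrm{diag}(\varphi^{2})-\lambda I$ symmetric, and surjectivity is tested through $(u^{\mathrm{T}},\alpha)\,DH=0$. The $K$-block immediately forces $(1-t)u_{i}\varphi_{i}=0$; since $t<1$ and $\varphi\neq 0$ (from $\varphi^{\mathrm{T}}\varphi=1/h$), $u$ is supported on $J_{0}:=\{j:\varphi_{j}=0\}$, and the $\lambda$- and $t$-block conditions become automatic. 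The only nontrivial relation left is $Lu=\alpha\varphi$. Inserting $\varphi_{j}=0$ into the $j$-th row of the first equation of (\ref{sec 3: homotopy one dimen}) yields $\varphi_{j-1}+\varphi_{j+1}=0$; propagating this recurrence rules out $1,n\in J_{0}$ and forbids consecutive indices in $J_{0}$, so with $J_{+}:=\{1,\dots,n\}\setminus J_{0}$ one has $|J_{+}|\geq|J_{0}|+1$ and the principal submatrix of $L$ on $J_{0}$ is diagonal. Splitting $Lu=\alpha\varphi$ into $J_{0}$- and $J_{+}$-rows and reading the $J_{+}$-row at the two neighbours $j\pm 1\in J_{+}$ of any $j\in J_{0}$ produces two expressions for $u_{j}$ whose consistency, together with $\varphi_{j+1}=-\varphi_{j-1}$, forces $\alpha\varphi_{j-1}=0$; since $\varphi_{j-1}\neq 0$ this gives $\alpha=0$, and back-substitution along the chain graph on $J_{0}$ (indices joined by $|j-j'|=2$) then yields $u_{J_{0}}=0$. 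The main obstacle I expect is organizing this index argument uniformly across all configurations of $J_{0}$---isolated indices, chains with $|j-j'|=2$, and boundary neighbours---without proliferation of cases, but the over-determined count $|J_{+}|\geq|J_{0}|+1$ makes the outcome robust.

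Boundedness is straightforward. The normalization pins $\|\varphi\|_{2}^{2}=1/h$ along the whole path and $t\in[0,1)$ is automatically bounded. Left-multiplying the eigen-equation by $\varphi^{\mathrm{T}}$ gives
\[
\lambda=h\,\varphi^{\mathrm{T}}\bigl[(1-t)A(K)+D\bigr]\varphi+t\beta h\sum_{i=1}^{n}\varphi_{i}^{4},
\]
and each term on the right is bounded by an $h$- and $K$-dependent constant via $\sum_{i}\varphi_{i}^{4}\leq\bigl(\sum_{i}\varphi_{i}^{2}\bigr)^{2}=h^{-2}$. Hence $|\lambda|$ is uniformly bounded and $(\varphi(s),\lambda(s),t(s))$ lies in a bounded subset of $\mathbb{R}^{n}\times\mathbb{R}\times[0,1)$.
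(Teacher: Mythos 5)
Your part (ii) is the paper's computation verbatim ($\|\varphi\|^2=1/h$, multiply the eigen-equation by $h\varphi^{\mathrm{T}}$, bound by spectral radii), so nothing to compare there. For part (i) you reach the same conclusion by a genuinely different route. The paper never looks at the zero pattern of $\varphi$: it performs column operations on the Jacobian of $\tilde H$, using the $K$-block $(1-t)\,\mathrm{diag}(\varphi)$ to replace $3t\beta\,\mathrm{diag}(\varphi^2)$ by $t\beta\,\mathrm{diag}(\varphi^2)$ in the columns with $\varphi_i\neq 0$ (the two columns coincide where $\varphi_i=0$), so the leading block becomes $(1-t)A(K)+D+t\beta\,\mathrm{diag}(\varphi^2)-\lambda I$, exactly the matrix of the eigen-equation; this matrix is symmetric tridiagonal with nonzero off-diagonals, hence $\lambda$ is a simple eigenvalue and the bordered matrix $F_1$ is nonsingular, giving full row rank for every $K$ and every $t\in[0,1)$ in one stroke. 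You instead test surjectivity with a left null vector $(u,\alpha)$, use the $K$-block to localize $u$ on $J_0=\{j:\varphi_j=0\}$, and then exploit the combinatorics of $J_0$ (no boundary zeros, no consecutive zeros, $\varphi_{j+1}=-\varphi_{j-1}$) to force $\alpha=0$ and $u=0$. That is more elementary---no appeal to simplicity of eigenvalues or to nonsingularity of bordered matrices---but it is also where your write-up is thinnest: the ``two expressions for $u_j$'' step as stated covers only an isolated zero ($j\pm2\notin J_0$); for a maximal chain $j_1,j_1+2,\dots,j_1+2(k-1)\subset J_0$ the adjacent $J_+$ rows couple consecutive $u_{j_l}$'s, and one must propagate: with $c=-1/(2h^2)$ and $w=\varphi_{j_1-1}\neq0$ the interior rows give $cu_{j_l}=(-1)^{l+1}l\,\alpha w$, and the terminal row $cu_{j_k}=(-1)^k\alpha w$ then forces $(k+1)\alpha w=0$, hence $\alpha=0$ and $u=0$. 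So your sketch does close, but that chain computation is the real content; the counting bound $|J_+|\ge|J_0|+1$ by itself proves nothing. In exchange for the case analysis, your argument makes visible why zeros of $\varphi$ cannot obstruct regularity, which is structurally the same kind of zero-pattern analysis the paper only deploys later for the two-dimensional homotopy, whereas the paper's column-operation trick buys complete uniformity in the one-dimensional case.
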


\begin{proof}
(i) Define a mapping $\tilde{H}: \mathbb{R}^n\times \mathbb{R}\times [0,1]\times \mathbb{R}^n \rightarrow \mathbb{R}^n\times \mathbb{R}$ related to $H$ as follows
\begin{equation}\label{sec3: mapping-related-to-homotopy-1D}
\tilde{H} (\varphi,\lambda,t,K)=
\left(
\begin{array}{c}
     (1-t)A(K)\varphi+D\varphi+t\beta\varphi^3-\lambda\varphi\\
      \frac{1}{2} \left( \frac{1}{h} - \varphi^{\mathrm{T}}\varphi \right)
\end{array}
\right)=0,
\end{equation}
such that $H(\varphi,\lambda,t) = \tilde{H} (\varphi,\lambda,t,K)$. The Jacobian matrix of $\tilde{H}$, $\frac{\partial \tilde{H}}{\partial(\varphi,\lambda,t,K)}$, is
\begin{equation*}
\left(
\begin{array}{cccc}
(1-t)A(K)+D+3t\beta \mbox{diag}(\varphi^2)-\lambda I &-\varphi& \beta\varphi^3-A(K)\varphi&(1-t)\mbox{diag}(\varphi)\\
                                              -\varphi^{\mathrm{T}}&0&0&0
\end{array}
\right).
\end{equation*}
Divide $\{1,\cdots,n\}$ into two parts $C^0$ and $C^*$, where $C^0$ denotes the indices $i$ such that $\varphi_i = 0$ and $C^*$ denotes the indices $i$ such that $\varphi_i \neq 0$. Since $\forall i \in C^0$, both the columns $3t\beta \mbox{diag}(\varphi^2)(:,i)$ and $t\beta \mbox{diag}(\varphi^2)(:,i)$ are equal to zero, it holds that,
\begin{align}
\nonumber
& \left( (1-t)A(K)+D+3t\beta \mbox{diag}(\varphi^2)-\lambda I \right)(:,C^0) \\
= & \left( (1-t)A(K)+D+t\beta \mbox{diag}(\varphi^2)-\lambda I \right)(:,C^0).
\end{align}
For the columns in $C^*$, the diagonal matrix $\mbox{diag}(\varphi)$ is nonzero. By elementary column transformations, the Jacobian matrix $\frac{\partial \tilde{H}}{\partial(\varphi,\lambda,t,K)}$ is transformed to the following,
\begin{equation*}
\left(
\begin{array}{cccc}
(1-t)A(K)+D+t\beta \mbox{diag}(\varphi^2)-\lambda I &-\varphi& \beta\varphi^3-A(K)\varphi&(1-t)\mbox{diag}(\varphi)\\
                                              -\varphi^{\mathrm{T}}&0&0&0
\end{array}
\right).
\end{equation*}
Define
\begin{align}
F_1 =
\left(
\begin{array}{cc}
(1-t)A(K) + D + t\beta \mbox{diag}(\varphi^2)-\lambda I & -\varphi\\
-\varphi^{\mathrm{T}} & 0
\end{array}
\right).
\end{align}
For $\forall K \in \mathbb{R}^n$, $\forall (\varphi,\lambda,t)\in \mathbb{R}^n \times R \times [0,1)$ satisfying $(\ref{sec3: mapping-related-to-homotopy-1D})$, since the subdiagonals and supdiagonals of the matrix $(1-t)A(K) + D + t \beta \mbox{diag}(\varphi^2)$ are nonzero, $\lambda$ is a simple eigenvalue of $(1-t)A(K) + D + t \beta \mbox{diag}(\varphi^2)$. Therefore $F_1$ is nonsingular and $\frac{\partial \tilde{H}}{\partial(\varphi,\lambda,t,K)}$ is row full rank. As a result, 0 is a regular value of the mapping $\tilde{H}$. From Theorem \ref{sec 3:Parametrized Sard's Theorem}, for almost all $K\in \mathbb{R}^n$, 0 is a regular value of the restricted mapping $\tilde{H}(\cdot,\cdot,\cdot,K)$, i.e., $H$.

(ii) From $(\ref{sec 3: homotopy one dimen})$, for fixed $K \in \mathbb{R}^n$, $\forall (\varphi,\lambda, t) \in \mathbb{R}^n \times \mathbb{R} \times [0,1)$ satisfying $H(\varphi,\lambda,t)$$ = $$0$, $\|\varphi\| = 1/\sqrt{h}$ and $\lambda = h \left( (1-t)\varphi^{\mathrm{T}} A(K) \varphi + \varphi^{\mathrm{T}}D\varphi + t\beta\varphi^{\mathrm{T}}\varphi^3 \right)$. Then
\begin{eqnarray*}
|\lambda| &= &h|(1-t)\varphi^{\mathrm{T}} A(K)\varphi + \varphi^{\mathrm{T}}D\varphi + t\beta\varphi^{\mathrm{T}}\varphi^3| \nonumber \\
         & \le&(1-t)\rho(A(K))+\rho(D)+\frac{t\beta}{h} \nonumber \\
        &   \le&\rho(A(K))+\rho(D)+\frac{\beta}{h},
\end{eqnarray*}
where $\rho(M)$ denotes the spectral radius of $M$.
\end{proof}

\subsection{Two dimensional case}
\subsubsection{The homotopy with random tridiagonal matrix}
\label{sec3:subsubset-tridiagonal-case}
For two dimensional case, the homotopy $H:\mathbb{R}^{mn}\times \mathbb{R}\times [0,1] \rightarrow \mathbb{R}^{mn}\times \mathbb{R}$ is constructed as follows
\begin {equation}
\label{sec3:eqn-homotopy-3diagonal}
H(\varphi,\lambda,t)=
\left(
\begin{array}{c}
     (1-t)A(K)\varphi+D\varphi+t\beta\varphi^3-\lambda\varphi\\
     \frac{1}{2} \left( \frac{1}{h_1 h_2} - \varphi^{\mathrm{T}}\varphi \right)
\end{array}
\right)=0,
\end{equation}
where
$A(K) \in \mathbb{R}^{mn \times mn}$ is a random block diagonal matrix with tridiagonal blocks, namely,
\begin{equation*}
A(K)=
\setlength{\arraycolsep}{0.5pt}
 \renewcommand{\arraystretch}{0.1}
\left(\begin{array}{cccc}
    A_{1}&&&\\
    &A_{2}&&\\
    &&\ddots&\\
    &&&A_{m}\end{array}\right) \mbox{with} ~A_{i}=
                          \setlength{\arraycolsep}{2pt}
                            \renewcommand{\arraystretch}{0.5}
                              \left(\begin{array}{ccccc}
                            a_{11}^{(i)} & a_{12}^{(i)}&&&\\
                         a_{12}^{(i)} & a_{22}^{(i)} & a_{23}^{(i)} &&\\
                           & a_{23}^{(i)} & a_{33}^{(i)} &\ddots&\\
                             && \ddots & \ddots & a_{n-1,n}^{(i)}\\
                              &&& a_{n-1,n}^{(i)} & a_{nn}^{(i)}\end{array}\right),~i=1,\ldots,m,
\end{equation*}
and
$K =\left( K_1,K_2,\ldots,K_m \right)^{\mathrm{T}}$ with
$K_i =\left( a_{11}^{(i)},a_{12}^{(i)},a_{22}^{(i)},a_{23}^{(i)},\ldots,a_{n-1,n-1}^{(i)},a_{n-1,n}^{(i)},a_{nn}^{(i)} \right)$. $H(\varphi,\lambda,0)$ corresponds to the linear eigenvalue problem
\begin{equation*}
H(\varphi,\lambda,0)=
\left(
\begin{array}{c}
      A(K)\varphi+D\varphi-\lambda\varphi\\
      \frac{1}{2} \left( \frac{1}{h_1 h_2} - \varphi^{\mathrm{T}}\varphi \right)
\end{array}
\right)=0,
\end{equation*}
while $H(\varphi,\lambda,1)$ corresponds to the problem $(\ref{sec 2: finite difference - two dimen})$.

In order to show the effectiveness of this homotopy $H$ by the Parametrized Sard's Theorem, define a mapping $\tilde{H}: \mathbb{R}^{mn}\times \mathbb{R}\times [0,1] \times \mathbb{R}^{(2nm-m)} \rightarrow \mathbb{R}^{mn}\times \mathbb{R}$ related to $H$ as follows
\begin{equation}\label{sec3: mapping-related-to-homotopy-2D}
\tilde{H} (\varphi,\lambda,t,K)=
\left(
\begin{array}{c}
     (1-t)A(K)\varphi+D\varphi+t\beta\varphi^3-\lambda\varphi\\
      \frac{1}{2} \left( \frac{1}{h_1 h_2} - \varphi^{\mathrm{T}}\varphi \right)
\end{array}
\right)=0,
\end{equation}
such that $H(\varphi,\lambda,t) = \tilde{H} (\varphi,\lambda,t,K)$. The Jacobian matrix of $\tilde{H}$, $\frac{\partial \tilde{H}}{\partial(\varphi,\lambda,t,K)}$, is
\begin{equation}
\label{sec3:jacobi-matrix-for-tridiagonal}
\setlength{\arraycolsep}{8pt}
\renewcommand{\arraystretch}{0.45}
\left(
\begin{array}{cccc}
(1-t)A(K)+D+3t\beta \mbox{diag}(\varphi^2)-\lambda I &-\varphi& \beta\varphi^3-A(K)\varphi&(1-t)B\\
                                              -\varphi^{\mathrm{T}}&0&0&0
\end{array}
\right),
\end{equation}
where $B=\frac{\partial (A(K)\varphi)}{\partial K} \in \mathbb{R}^{{(mn)}\times {(2nm-m)}}$. Denote $\varphi_{i} = \left( \varphi_{i1},\varphi_{i2},\ldots,\varphi_{in} \right)^{\mathrm{T}}$. It can be verified that
\begin{equation}
B = \left(
  \begin{array}{cccc}
     B_{1}&&&\\
    &B_{2}&&\\
    &&\ddots&\\
    &&&B_{m}
  \end{array}
\right)
\label{sec3:def-B}
\end{equation}
with
\begin{equation}
B_i = \frac{\partial (A_i \varphi_i)}{\partial K_i}
= \left(
  \begin{array}{ccccccccccc}
     \varphi_{i1} & \varphi_{i2} & & & & & & & & &\\
    & \varphi_{i1} & \varphi_{i2} & \varphi_{i3} & & & & & & &\\
    & & & \varphi_{i2} & \varphi_{i3} & \varphi_{i4} & & & & &\\
    & & & & & & \ddots & & & &\\
    & & & & & & & \varphi_{i,n-2} & \varphi_{i,n-1} & \varphi_{in} &\\
    & & & & & & & & & \varphi_{i,n-1} & \varphi_{in}
  \end{array}
\right).
\end{equation}
For example, when $m = 2,n = 3$, we have
\begin{equation*}
\setcounter{MaxMatrixCols}{14}
B =
\setlength{\arraycolsep}{7.5pt}
\renewcommand{\arraystretch}{1}
 \begin{pmatrix}
  \varphi_{11} & \varphi_{12} &{0}&{0}&{0}&{0}&{0}&{0}&{0}&{0}\\
  {0} & \varphi_{11}&\varphi_{12}&\varphi_{13}&{0}&{0}&{0}&{0}&{0}&{0}\\
  {0} &{0}&{0}& \varphi_{12}&\varphi_{13}&{0}&{0}&{0}&{0}&{0}\\
  {0} &{0}&{0}&{0}&{0}&\varphi_{21}&\varphi_{22}&{0}&{0}&{0}\\
  {0} &{0}&{0}&{0}&{0}& {0}&\varphi_{21}&\varphi_{22}&\varphi_{23}&{0}\\
  {0} &{0}&{0}&{0}&{0}&{0}&{0}&{0}&\varphi_{22}&\varphi_{23}
\end{pmatrix}.
\end{equation*}

Denote by $G_a$ all the inner grid points and by $R_a$ the ordering of the grid points in $G_a$, i.e.,
\begin{align}
G_a=\{ (i,j):i=1,\ldots,m,j=1,\ldots,n \},\\
R_a=\{ j + (i-1)*n:(i,j)\in G_a \},
\end{align}
and by $G^0$ the grid points with function value being zero,
\begin{align}
G^0=\{ (i,j)\in G_a:\varphi_{ij}=0 \}.
\end{align}
Note that $G_a$ and $R_a$ have a one to one correspondence. Define such correspondence as a mapping $\Gamma : G_a \rightarrow R_a$,
\begin{align}
\Gamma(i,j) = j + (i-1)*n.
\end{align}
Denote by $R^0$ the indices of rows in which $B$ is zero, by $R^*$ the indices of rows, in which $B$ is not zero, and $S_i^0$ and $S_i^*$ with similar meanings for $B_i$,
\begin{align}
\label{sec3:notation-R0-R*}
R^0=\{ r:B(r,:)=0 \}, \quad R^*=\{ r:B(r,:)\neq 0 \},\\
S^0_i=\{ r:B_i(r,:)=0 \}, \quad S_i^*=\{ r:B_i(r,:)\neq 0 \}.
\end{align}
It can be verified that
\begin{align}
R^0 = \bigcup_{i=1}^{m}S^0_i, \quad R^* = \bigcup_{i=1}^{m}S_i^*.
\end{align}

\begin{lemma}
\label{sec3:lemma-row-full-rank-BR1}
Let $B$ be the matrix defined in (\ref{sec3:def-B}). Then the nonzero rows of $B$ are linearly independent, i.e., the submatrix $B(R^*,:)$ is row full rank.
\end{lemma}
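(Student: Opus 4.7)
Since $B$ is block-diagonal with blocks $B_1,\ldots,B_m$ that occupy pairwise disjoint column strips of $B$, any nonzero row of $B$ is supported inside exactly one strip, so rows coming from different blocks are automatically linearly independent. It therefore suffices to prove that, for each fixed $i$, the nonzero rows of the single block $B_i$ (the rows indexed by $S_i^*$) are linearly independent. My plan is to do this by an echelon-form argument based on the leftmost nonzero column of each nonzero row.

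\textbf{Structural reading of $B_i$.} From the explicit formula for $B_i$, row $k$ has potentially nonzero entries only at column $2(k-1)$ with value $\varphi_{i,k-1}$, at column $2k-1$ with value $\varphi_{ik}$, and at column $2k$ with value $\varphi_{i,k+1}$; the endpoint rows $k=1$ and $k=n$ are missing the first or last of these three positions. Two observations will drive the argument: the odd column $2k-1$ is touched \emph{only} by row $k$; and the even column $2k$ is shared by exactly rows $k$ and $k+1$, with row $k$ contributing $\varphi_{i,k+1}$ there and row $k+1$ contributing $\varphi_{ik}$. In particular, any two rows of $B_i$ whose indices differ by at least $2$ have disjoint column supports.

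\textbf{Echelon argument.} For each nonzero row index $k\in S_i^*$, let $\ell_k$ denote the leftmost nonzero column of that row. I plan to show $\ell_{k}\neq \ell_{k'}$ whenever $k<k'$ both lie in $S_i^*$. When $k'-k\ge 2$, the admissible column sets of rows $k$ and $k'$ are disjoint, so this is automatic. When $k'=k+1$, the only column common to both rows is $2k$; a coincidence $\ell_k=\ell_{k+1}=2k$ would require, on the one hand, $\varphi_{i,k-1}=\varphi_{ik}=0$ (otherwise row $k$ would already be nonzero at column $2(k-1)$ or $2k-1$, contradicting $\ell_k=2k$), and on the other hand $\varphi_{ik}\neq 0$ (so that row $k+1$'s entry at column $2k$, which equals $\varphi_{ik}$, is nonzero). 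These two conditions contradict each other. Consequently, sorting the rows of $B_i$ in $S_i^*$ by increasing leftmost nonzero column yields a matrix in row-echelon form with nonzero pivots, which is row full rank.

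\textbf{Main obstacle.} The whole argument is short once one reads off the sparsity pattern correctly; the only genuinely delicate step is the adjacent-row case $k'=k+1$, where the common column $2k$ forces the $\varphi_{ik}=0$ versus $\varphi_{ik}\neq 0$ contradiction. The endpoint rows $k=1,n$ only truncate the admissible column set and do not require a separate argument, and the passage from block rows to rows of the full matrix $B$ is handled by the initial disjoint-support reduction.
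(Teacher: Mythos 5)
Your proof is correct, and it takes a genuinely different route from the paper's. Your structural reading of $B_i$ is accurate: row $k$ can only be nonzero in columns $2k-2$, $2k-1$, $2k$ (suitably truncated at $k=1,n$), with values $\varphi_{i,k-1},\varphi_{ik},\varphi_{i,k+1}$; the odd column $2k-1$ belongs to row $k$ alone, the even column $2k$ is shared only by rows $k$ and $k+1$ with entries $\varphi_{i,k+1}$ and $\varphi_{ik}$ respectively, and rows at distance at least $2$ have disjoint supports. Your key step — that a coincidence of leading columns can only occur at a shared even column $2k$ and would force simultaneously $\varphi_{ik}=0$ (from row $k$) and $\varphi_{ik}\neq 0$ (from row $k+1$) — is a valid contradiction, and distinct leading columns for nonzero rows do imply linear independence by the standard echelon argument; the block-diagonal reduction across $i$ is also sound. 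The paper instead argues by induction on the nonzero components $\varphi_{i,i_1},\ldots,\varphi_{i,i_r}$ of $\varphi_i$: it successively adjoins the rows containing each new nonzero component and checks, through a case analysis on the gap $i_{k+1}-i_k$ and on boundary positions, that the newly added rows (one, two, or three of them) are independent of the previously accumulated block, exhibiting explicit local submatrix patterns. Your pivot-column argument is shorter and replaces that multi-case induction with a single adjacency case, which is a real simplification; what the paper's bookkeeping buys is an explicit description of which rows of $B_i$ are nonzero in terms of the pattern of nonzero entries of $\varphi_i$, a style of reasoning (tracking zero/nonzero patterns on the grid) that it then reuses heavily in the two-dimensional regularity analysis that follows.
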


\begin{proof}
Note that $B(R^*,:) = [B(S_1^*,:);\ldots; B(S_m^*,:)]$. Due to the block structure of $B$, it suffices to prove that for any $i$, $1 \le i \le m$, $B_i(S_i^*,:)$ is row full rank if $S_i^*$ is not empty.

Now suppose that $S_i^*$ is not empty, that is $\varphi_i \neq 0$. Let the nonzero components of $\varphi_i$ be $\varphi_{i,i_1},\varphi_{i,i_2},\ldots,\varphi_{i,i_r}$, with $i_1 < i_2 < \ldots < i_r$. Denote by $Z(\varphi_{i,i_1},\ldots,\varphi_{i,i_k})$ the rows of $B_i$ containing $\varphi_{i,i_1},\varphi_{i,i_2},\ldots,\varphi_{i,i_k}$, $1 \leq k \leq r$. Then $B_i(S_i^*,:) = Z(\varphi_{i,i_1},\ldots,\varphi_{i,i_r})$. The claim will be proved by successively adding rows with nonzero component of $\varphi_{i}$ to $Z$.

(1) Prove $Z(\varphi_{i,i_1})$ is row full rank. If $i_1 = 1$, $Z(\varphi_{i,i_1})$ is the first two rows of $B_i$ and it is row full rank. If $1 < i_1 < n$, $Z(\varphi_{i,i_1})$ is three successive rows of $B_i$ which contains the following submatrix involving  $\varphi_{i,i_1}$,
\begin{equation}
\left(
  \begin{array}{ccc}
     \varphi_{i,i_1} & 0 & 0\\
    * & \varphi_{i,i_1} & *\\
    0 & 0 & \varphi_{i,i_1}
  \end{array}
\right) \nonumber
\end{equation}
where $*$ stands for an element which may be zero or nonzero. Therefore $Z(\varphi_{i,i_1})$ is row full rank. If $i_1 = n$, $Z(\varphi_{i,i_1})$ is the last two rows of $B_i$ and is row full rank too.

(2) Prove that when $Z(\varphi_{i,i_1},\ldots,\varphi_{i,i_k})$ is row full rank, $Z(\varphi_{i,i_1},\ldots,\varphi_{i,i_{k+1}})$ is also row full rank, $1 \leq k < r$. If $i_k = n-1$, then $i_{k+1} = n$ and $Z(\varphi_{i,i_1},\ldots,\varphi_{i,i_{k+1}})=Z(\varphi_{i,i_1},\ldots,\varphi_{i,i_k})$. Next suppose $i_k < n-1$. If $i_{k+1} = i_k + 1$, i.e., $\varphi_{i,i_k}$ and $\varphi_{i,i_{k+1}}$ are successive, then $Z(\varphi_{i,i_1},\ldots,\varphi_{i,i_{k+1}})$ consists of $Z(\varphi_{i,i_1},\ldots,\varphi_{i,i_k})$ and a new row with $\varphi_{i,i_{k+1}}$ as its first nonzero element and with $\varphi_{i,i_{k+1}}$ located in a column different from those of $\varphi_{i,i_1},\ldots,\varphi_{i,i_k}$, the connecting submatrix illustrated in the following,
\begin{equation}
\left(
  \begin{array}{cccccc}
   * & \varphi_{i,i_k} & \varphi_{i,i_{k+1}} & 0 & 0 & 0\\
   0 & 0 & \varphi_{i,i_k} & \varphi_{i,i_{k+1}} & * & 0\\
   0 & 0 & 0 & 0& \varphi_{i,i_{k+1}} & *
  \end{array}
\right) \nonumber
\end{equation}
Thus $Z(\varphi_{i,i_1},\ldots,\varphi_{i,i_{k+1}})$ is row full rank. If $i_{k+1} = i_k + 2 < n$, then $Z(\varphi_{i,i_1},\ldots,\varphi_{i,i_{k+1}})$ consists of $Z(\varphi_{i,i_1},\ldots,\varphi_{i,i_k})$ and two new rows and similarly these two new rows are linearly independent with $Z(\varphi_{i,i_1},\ldots,\varphi_{i,i_k})$. If $i_{k+1} \ge i_k + 3$ and $i_{k+1} < n$, then $Z(\varphi_{i,i_1},\ldots,\varphi_{i,i_{k+1}})$ consists of $Z(\varphi_{i,i_1},\ldots,\varphi_{i,i_k})$ and three new rows. If $i_{k+1} = n$, $i_{k+1} = i_k + 2$, then $Z(\varphi_{i,i_1},\ldots,\varphi_{i,i_{k+1}})$ consists of $Z(\varphi_{i,i_1},\ldots,\varphi_{i,i_k})$ and a new row. If $i_{k+1} = n$, $i_{k+1} = i_k + s$ and $s \ge 3$, then $Z(\varphi_{i,i_1},\ldots,\varphi_{i,i_{k+1}})$ consists of $Z(\varphi_{i,i_1},\ldots,\varphi_{i,i_k})$ and two new rows. All the latter cases can be similarly proved.
\end{proof}

In the following, first we prove there exists a zero measure set $U_1 \subset \mathbb{R}^{(2nm-m)}$, such that if $K \in \mathbb{R}^{(2nm-m)}\setminus U_1$, the eigenvalues of
$A(K)+D$ are simple. Then we prove that 0 is a regular value of $H(\varphi,\lambda,t)$ for almost all $K \in \mathbb{R}^{(2nm-m)}\setminus (U_1 \cup U_2)$, where $U_2 =(\mathbb{R}^{+})^{(2nm-m)}$. The removal of $U_2$ is to make the elements in the subdiagonal and supdiagonal of the matrix $(1-t)A(K) + D$ negative for $t \in [0,1)$.

\begin{lemma}
\label{sec3:multiple-root}
The eigenvalues of  $A(K)+D$ are simple for $K$ almost everywhere in $\mathbb{R}^{(2nm-m)}$ except on a subset of real codimension 1.
\end{lemma}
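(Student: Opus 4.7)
The plan is to view the discriminant of the characteristic polynomial of $A(K)+D$ as a polynomial in the parameter vector $K$ and to show that this polynomial is not identically zero. Set $p_K(\lambda):=\det(\lambda I-A(K)-D)$. Since every entry of $A(K)+D$ is an affine function of the components of $K$, each coefficient of $p_K$ is a polynomial in $K$, and consequently the discriminant $\Delta(K):=\mbox{disc}_\lambda p_K$ is itself a polynomial in $K$. The matrix $A(K)+D$ has a repeated eigenvalue if and only if $\Delta(K)=0$, so the lemma will follow once we establish $\Delta\not\equiv 0$: the zero locus of a nonzero real polynomial on $\mathbb{R}^{2nm-m}$ is a proper real algebraic subvariety, hence has real codimension at least one and Lebesgue measure zero, which is exactly the conclusion claimed.

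To exhibit $\Delta\not\equiv 0$, I would produce one explicit $K^{*}$ for which $A(K^{*})+D$ has $mn$ pairwise distinct eigenvalues. The natural choice is to set all off-diagonal parameters $a^{(i)}_{k,k+1}=0$ and the diagonal parameters $a^{(i)}_{kk}=L\cdot\bigl(k+(i-1)n\bigr)$ for a constant $L$ to be fixed. Then $A(K^{*})$ is diagonal with entries $L,2L,\ldots,mnL$, each consecutive pair separated by $L$. Since $A(K^{*})+D$ is symmetric, Weyl's inequality places its $j$-th eigenvalue within $\|D\|$ of $jL$; choosing $L>2\|D\|$ therefore forces the $mn$ eigenvalues of $A(K^{*})+D$ to be pairwise distinct, so $\Delta(K^{*})\neq 0$.

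The only genuine step is the polynomial-non-vanishing argument; the codimension-one and measure-zero statements are then standard facts about the zero locus of a nontrivial real polynomial. I do not anticipate a substantive technical obstacle, because $D$ is a fixed bounded symmetric matrix and the Weyl-type separation estimate is clean once $L$ is taken large enough. In particular, the block-tridiagonal coupling contained in $D$ does not interfere with the construction, since the diagonal choice of $A(K^{*})$ is designed to dominate it.
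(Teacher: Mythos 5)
Your proposal is correct and follows essentially the same route as the paper: the discriminant of the characteristic polynomial is a polynomial in $K$, it is not identically zero, and hence its zero set is a proper algebraic subvariety of codimension one (the paper cites \cite{CT} for the discriminant criterion and simply asserts the non-vanishing as obvious). Your explicit witness $K^{*}$ with widely separated diagonal entries and the Weyl-inequality separation argument is a legitimate and slightly more detailed way of justifying the step the paper leaves to the reader.
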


\begin{proof}
Let $f(\lambda) = det(A(K) + D -\lambda I)$. The polynomial $f(\lambda)$ has no multiple roots if and only if its discriminant $R(K)$ is nonzero \cite{CT}. It is obvious that $R(K)$ is not identically zero. Furthermore, since $R(K)$ is a polynomial in the elements of vector $K$, it can vanish only on a hypersurface of real codimension 1. The hypersurface is
\begin{equation*}
U_1 = \{K |R(K) = 0\}.
\end{equation*}
\end{proof}

In the following Lemma \ref{sec3:lemma-union-row-full-rank}, we prove that for any matrix $F$ consisting of several submatrices by row, if every submatrix of $F$ is row full rank and the index sets of nonzero columns do not intersect for any two submatrices, then $F$ is row full rank.

\begin{lemma}
\label{sec3:lemma-union-row-full-rank}
Let $F \in \mathbb{R}^{p \times q}$ be a matrix. Suppose $\{ 1,\ldots,p \} = \bigcup_{i=1}^{s}I_i$, with $I_i \bigcap I_j = \emptyset$, if $i \neq j$. Denote
$$
E_i = \{ k\in\{ 1,\ldots,q \}:F(I_i,k) \neq 0 \}.
$$
Suppose that for any $1 \le i \le s$, $F(I_i,:)$ is row full rank and $E_i \bigcap E_j = \emptyset$, if $i \neq j$. Then $F$ is row full rank.
\end{lemma}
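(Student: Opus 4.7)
The plan is to show that no nontrivial linear combination of the rows of $F$ can vanish. Suppose, working toward a contradiction, that there exist scalars $\{c_r\}_{r=1}^{p}$, not all zero, such that $\sum_{r=1}^{p} c_r F(r,:) = 0$. Using the decomposition $\{1,\ldots,p\} = \bigcup_{i=1}^{s} I_i$, I would regroup this as
\begin{equation*}
\sum_{i=1}^{s} \sum_{r \in I_i} c_r F(r,:) \;=\; 0.
\end{equation*}
The natural next move is to exploit the disjointness of the $E_i$'s by restricting the identity to the columns indexed by a single $E_i$.

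The key observation is that for $j \neq i$ and any $r \in I_j$, the nonzero entries of $F(r,:)$ lie in $E_j$, which is disjoint from $E_i$; hence $F(r, E_i) = 0$. Restricting the dependency above to the columns $E_i$ therefore kills every term coming from blocks other than the $i$-th, leaving
\begin{equation*}
\sum_{r \in I_i} c_r F(r, E_i) \;=\; 0.
\end{equation*}
Because the columns of $F(I_i,:)$ outside $E_i$ are identically zero by the definition of $E_i$, the submatrix $F(I_i, E_i)$ has the same row space as $F(I_i,:)$. In particular, since $F(I_i,:)$ is row full rank by hypothesis, so is $F(I_i, E_i)$, which forces $c_r = 0$ for every $r \in I_i$. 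Repeating this argument for each $i = 1, \ldots, s$ shows that every $c_r$ must vanish, contradicting the assumption, and thereby proving $F$ is row full rank.

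I do not expect any substantive obstacle here: the proof is essentially a block-diagonal decoupling argument, and the only point that needs a little care is the observation that restricting $F(I_i,:)$ to its support columns $E_i$ preserves row rank. That follows immediately from the fact that the removed columns are zero columns, so the main work is just careful indexing rather than any genuine difficulty.
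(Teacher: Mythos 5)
Your proof is correct and rests on the same decoupling idea as the paper's: the disjointness of the supports $E_i$ isolates each block $F(I_i,:)$, whose full row rank then forces the corresponding coefficients (or columns) to behave independently. The paper packages this by choosing column sets $J_i \subset E_i$ with $F(I_i,J_i)$ nonsingular and observing that the resulting square submatrix is block diagonal, while you run the equivalent linear-dependence argument directly by restricting a vanishing row combination to the columns $E_i$; the two formulations are interchangeable.
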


\begin{proof}
Since for any $1 \le i \le s$, $F(I_i,:)$ is row full rank, there exist a column index set $J_i \subset E_i$ such that $F(I_i,J_i)$ is a nonsingular submatrix. Correspondingly, the matrix $[F(I_1,:);F(I_2,:);\ldots;F(I_s,:)]$ has a nonsingular submatrix as follows,
\begin{equation}
\left(
  \begin{array}{cccc}
     F(I_1,J_1) &&&\\
    & F(I_2,J_2) &&\\
    &&\ddots&\\
    &&& F(I_s,J_s)
  \end{array}
\right).
\nonumber
\end{equation}
As a result, the matrix $[F(I_1,:);F(I_2,:);\ldots;F(I_s,:)]$ is row full rank and so is $F$.
\end{proof}

To prove that $0$ is a regular value of  $\tilde{H} (\varphi,\lambda,t,K):\mathbb{R}^{mn}\times R \times [0,1)\times  \mathbb{R}^{(2nm-m)}\setminus (U_1\cup U_2) \to \mathbb{R}^{mn+1}$, we need to prove $\forall (\varphi,\lambda,t,K)\in \mathbb{R}^{mn}\times R \times [0,1)\times  \mathbb{R}^{(2nm-m)}\setminus (U_1\cup U_2)$ satisfying $ \tilde{H}(\varphi,\lambda,t,K) = 0$, the Jacobian matrix of $\tilde{H}(\varphi,\lambda,t,K)$ is row full rank.
$\forall (\varphi,\lambda,t,K)$ satisfying $ \tilde{H}(\varphi,\lambda,t,K) = 0$, for $B$ defined in (\ref{sec3:def-B}), we have $B(R^0,:) = 0$. Correspondingly for the Jacobian matrix defined in (\ref{sec3:jacobi-matrix-for-tridiagonal}), we have
\begin{eqnarray*}
((1-t)A + D + 3t\beta \mbox{diag}(\varphi)^2 - \lambda I)(R^0,:) = ((1-t)A + D - \lambda I)(R^0,:).
\end{eqnarray*}
Through row permutations, the Jacobian matrix of $\tilde{H}(\varphi,\lambda,t,K)$, $\frac {\partial \tilde{H}}{\partial(\varphi,\lambda,t,K)}$, can be rewritten as
\begin{equation}
\setlength{\arraycolsep}{2pt}
 \renewcommand{\arraystretch}{0.1}
\left(
\begin{array}{cccc}
    ((1-t)A + D + 3t\beta \mbox{diag}(\varphi)^2 - \lambda I)(R^*,:) & -\varphi(R^*)&(\beta \varphi^3 - A \varphi)(R^*)&(1-t)B(R^*,:)\\
    ((1-t)A + D - \lambda I)(R^0,:) & -\varphi(R^0) & (\beta \varphi^3 - A \varphi)(R^0)&0\\
     -\varphi^\mathrm{T} &0&0&0
\end{array}
\right).
\nonumber
\end{equation}
$\forall (\varphi,\lambda,t,K)$ satisfying $ \tilde{H}(\varphi,\lambda,t,K) = 0$, from Lemma \ref{sec3:lemma-row-full-rank-BR1}, we know $B(R^*,:)$ is row full rank. Therefore, if we can prove
\begin{equation}
\left(\begin{array}{c}
     ((1-t)A + D - \lambda I)(R^0,:)\\
     -\varphi^\mathrm{T}  \end{array}\right)
\end{equation}
is row full rank, then $\frac {\partial \tilde{H}}{\partial(\varphi,\lambda,t,K)}$ is row full rank. From $\tilde{H} = 0$, we have  $((1-t)A + D - \lambda I)(R^0,:)\varphi = 0$, i.e., $\varphi$ is orthogonal to the rows of $((1-t)A + D - \lambda I)(R^0,:)$. Therefore, if we can prove $((1-t)A + D - \lambda I)(R^0,:)$ is row full rank, $\frac {\partial \tilde{H}}{\partial(\varphi,\lambda,t,K)}$ is row full rank. Now the problem is turned into proving that $((1-t)A + D - \lambda I)(R^0,:)$ is row full rank.

For easy exposition, some concepts concerning the topology of the grid points with zero function value are introduced. In addition, $(i,j)$ will be considered as grid point in the rest of this section, representing $(x_i,y_j)$.

\begin{definition}
In the grid $G_a$, a zero valued node $(i,j)$ is a grid point with $\varphi_{i,j}=0$.
\end{definition}

\begin{definition}
Two zero valued nodes $(i_1,j_1)$ and $(i_p,j_p)$ are said to be zero valued connected, if there exist a sequence of zero valued nodes,
\begin{align}
(i_1,j_1),(i_2,j_2),\ldots,(i_p,j_p),
\end{align}
such that for any two successive nodes $(i_k,j_k)$ and $(i_{k+1},j_{k+1})$ of the sequence, the distance of these two nodes is 1 in the sense that
\begin{align}
|i_k-i_{k+1}| + |j_k-j_{k+1}| =1.
\end{align}
\end{definition}

\begin{definition}
A set $S$ consisting of zero valued nodes is called a zero valued connected set if any two nodes of $S$ are zero valued connected.
\end{definition}

\begin{definition}
A set $S$ consisting of zero valued nodes is called a zero valued connected component if $S$ is connected and S is the largest zero connected set containing $S$.
\end{definition}

From (\ref{sec3:eqn-homotopy-3diagonal}), the discretization of the differential equation in a stencil is written explicitly,
\begin{align}
\label{sec3:eqn-in-one-stencil}
\alpha_{1}^{(ij)}\varphi_{ij} + \alpha_{2}^{(ij)}\varphi_{i-1,j}+ \alpha_{3}^{(ij)}\varphi_{i+1,j}+ \alpha_{4}^{(ij)}\varphi_{i,j+1}+\alpha_{5}^{(ij)}\varphi_{i,j-1} =0,\\
\varphi_{0j} = \varphi_{m+1,j} = 0,\\
\varphi_{i0} = \varphi_{i,n+1} = 0,\\
\varphi^{\mathrm{T}}\varphi - \frac{1}{h_1h_2} = 0,
\end{align}
where $\alpha_{1}^{(ij)}=(1-t)a_{jj}^{(i)}+\frac{1}{h_1^2}+\frac{1}{h_2^2} + v_{ij}+ t\beta \varphi_{ij}^2-\lambda$, $\alpha_{2}^{(ij)} = \alpha_{3}^{(ij)} = (1-t)a_{jj+1}^{(i)}-\frac{1}{2h_2^2}$, $\alpha_{4}^{(ij)} = \alpha_{5}^{(ij)} = -\frac{1}{2h_1^2}$, and $i=1,\ldots,m$, $j=1,\ldots,n$. When $K \in R^{(2nm-m)}\setminus (U_1\bigcup U_2)$, the sign relationships among the components of $\varphi$ are given in the following remark.

\begin{remark}
\label{sec3:remark-0relation-in-one-stencil}
Let $(i,j)$ be an inner zero valued node. Assume that all except two of its neighbouring points are known to be zero valued nodes. If one of the rest two points is a zero valued node, then so is the other; If one of the rest two points is not a zero valued node, then neither is the other.
\end{remark}

For any set $G$ of zero valued nodes, denote by $R_G$ the set of indices of rows corresponding to $G$, in which the matrix $B$ is zero, i.e.,
\begin{align}
\label{sec3:def-RG}
R_G= \{ r:B(r,:)=0,r = j + (i-1)*n, (i,j)\in G \}.
\end{align}
Note that if a zero valued node $(i,j) \in G$ is such that $B(s,:)=0$ with $s = j + (i-1)*n$, then the neighbouring inner grid points in $y$-direction should be zero valued nodes, that is, both $(i,j-1)$ and $(i,j+1)$ are zero valued nodes if $1<j<n$, or $(i,j+1)$ is a zero valued node if $j=1$, or $(i,j-1)$ is a zero valued node if $j=n$. If $(i,j) \in G$ and the upper point $(i,j+1)$ or the lower point $(i,j-1)$ is not a zero valued node, then the corresponding row index of $s=\Gamma(i,j)$ will not be in $R_G$. Therefore $\Gamma^{-1}(R_G) \subset G$. Note that $R_G$ may be empty even if $G$ is not empty.

Let $M = (1-t)A + D -\lambda I$. Denote by $C_G$ the indices of columns corresponding to the zero valued connected set $G$, in which $M(R_G,:)$ is not zero, i.e.,
\begin{align}
C_G = \{ c\in R_a: M(R_G,c) \neq 0 \}.
\end{align}
If $R_G = \emptyset$, define $C_G = \emptyset$. For any $s \in R_G$, let $(i,j) = \Gamma^{-1}(s)$. Then the $\Gamma$ images of $(i,j)$ and its neighbouring inner grid points are possibly included in $C_G$.

Denote by $g_i$ the $i$-th column of grid points in $x$-direction, i.e.,
\begin{align}
g_i = \{ (i,j): 1 \leq j \leq n \},
\end{align}
and by $O_1$ ($O_m$) the ordering of all the inner grid points except the first (last) column in $y$-direction,
\begin{align}
O_1 = \{ r=j+(i-1)*n: (i,j) \in G_a\backslash g_1 \}, \\
O_m = \{ r=j+(i-1)*n: (i,j) \in G_a\backslash g_m \}.
\end{align}

\begin{lemma}
\label{sec3:lemma-row-full-rank-O1-Om}
Both $M(O_1,:)$ and $M(O_m,:)$ are row full rank.
\end{lemma}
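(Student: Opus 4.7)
My plan is to exhibit, for each of $M(O_m,:)$ and $M(O_1,:)$, a nonsingular $(m-1)n\times(m-1)n$ submatrix; row full rank then follows immediately. First I would make the block structure of $M$ explicit. Viewed as an $m\times m$ array of $n\times n$ blocks (the $i$-th block row/column corresponding to the grid column $g_i$), $M = (1-t)A(K) + D - \lambda I$ is block tridiagonal: each diagonal block is $M_{ii} = (1-t)A_i + \tfrac{1}{2}D_{ii} + V_i - \lambda I$ (itself tridiagonal), while every off-diagonal block equals $M_{i,i\pm 1} = -\tfrac{1}{2h_1^2}I_n$, a nonsingular scalar multiple of the identity that is independent of $K$, $t$, and $\lambda$. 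In this decomposition, $O_m$ consists of block rows $1,\ldots,m-1$ and $O_1$ of block rows $2,\ldots,m$.

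For $M(O_m,:)$ I would extract the square submatrix formed by columns $\{n+1,\ldots,mn\}$, i.e., block columns $2,\ldots,m$ of $M$. After re-indexing the new block columns by $c_N = c-1 \in\{1,\ldots,m-1\}$, the $(i,c_N)$ block of the submatrix equals $M$'s $(i,c_N+1)$ block, which is nonzero only when $c_N\in\{i-2,i-1,i\}$. Hence the submatrix is lower block triangular, with diagonal blocks (at $c_N=i$) equal to the super-diagonal blocks $-\tfrac{1}{2h_1^2}I_n$ of $M$. These are manifestly nonsingular, so a block-triangular determinant computation yields nonsingularity, and $M(O_m,:)$ is row full rank. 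The argument for $M(O_1,:)$ is the mirror image: I would take columns $\{1,\ldots,(m-1)n\}$, i.e., block columns $1,\ldots,m-1$, and re-index the block rows by $i_N = i-1$. The same bookkeeping shows the resulting submatrix is upper block triangular with diagonal blocks equal to the sub-diagonal blocks $-\tfrac{1}{2h_1^2}I_n$ of $M$, hence nonsingular.

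I anticipate no real obstacle. The entire proof hinges on noticing that the $x$-direction coupling blocks of $D$ are nonzero multiples of the identity, so that the natural shift-by-one-block column selection produces a triangular form whose diagonal blocks are built entirely from those couplings; the within-block tridiagonal complications of $M_{ii}$ never enter the determinant. Keeping the index shifts straight is the only delicate point, and can be sanity-checked on a small case such as $m=3$, $n=2$.
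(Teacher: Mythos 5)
Your proof is correct and is essentially the paper's argument: both rest on the fact that the $x$-direction coupling entries $-\tfrac{1}{2h_1^2}$ (coming from $D$ alone, hence independent of $K$, $t$, $\lambda$) give each row of $M(O_1,:)$ (resp.\ $M(O_m,:)$) its leading (resp.\ trailing) nonzero in a distinct column, so that the columns $1,\ldots,(m-1)n$ (resp.\ $n+1,\ldots,mn$) form a triangular nonsingular square submatrix. Your block-level bookkeeping is simply a more explicit rendering of the staircase pattern the paper exhibits directly at the entry level.
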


\begin{proof}
It is obvious that $M(O_1,:)$ has the following form:
\begin{equation}
\left(
  \begin{array}{ccccc}
    *& \ldots &\ldots & \ldots & \ldots \\
    & * & \ldots & \ldots & \ldots\\
    & & \ddots & \ldots & \ldots \\
    & & & * & \ldots
  \end{array}
\right), \nonumber
\end{equation}
where $*$ represents nonzero element. Therefore $M(O_1,:)$ is row full rank.

$M(O_m,:)$ has the following form:
\begin{equation}
\left(
  \begin{array}{ccccc}
    \ldots & * &  &  & \\
    \ldots & \ldots & * &  &  \\
    \ldots & \ldots & \ldots & \ddots &  \\
    \ldots & \ldots & \ldots & \ldots & *
  \end{array}
\right). \nonumber
\end{equation}
Therefore $M(O_m,:)$ is row full rank.
\end{proof}

Note that if $G^0 \bigcap g_1 = \emptyset$, then $R^0 \subset O_1$, from Lemma \ref{sec3:lemma-row-full-rank-O1-Om}, $M(R^0,:)$ is row full rank. In the following, we consider the case $G^0 \bigcap g_1 \not= \emptyset$. The set $G^0 \bigcap g_1$ can have its own zero valued connected components.

\begin{lemma}
\label{sec3:lemma-span-of-zero-valued-nodes}
For $m \ge n \ge 6$, suppose that there is a zero valued connected component $\gamma$ of $G^0 \bigcap g_1$ with $s$ points,  $s \geq 2$. Then
\begin{enumerate}[(i)]
\item  $s < n$;
\item If $\gamma$ contains the point $(1,1)$ or contains the point $(1,n)$, then the zero valued connected component $G$ of $G^0$ containing $\gamma$ will be the set of zero valued nodes starting from $\gamma$ and ending on column $s$ with $s+1-i$ zero valued nodes on column $i$, $1 \leq i \leq s$;
\item Suppose that $\gamma$ is located in the inner part of $g_1$, i.e., the grid point $(1,j)$ of $\gamma$ is such that $2 \leq j \leq n-1$. If $s$ is even, then the zero valued connected component $G$ of $G^0$ containing $\gamma$ will be the set of zero valued nodes starting from $\gamma$ and ending on column $s/2$ with $s+2-2i$ zero valued nodes on column $i$, $1 \leq i \leq s/2$;
\item Suppose that $\gamma$ is located in the inner part of $g_1$. If $s$ is odd, then there is a zero valued connected set $G$ of $G^0$ containing $\gamma$ and arriving at a single zero valued node on column $(s+1)/2$ with $s+2-2i$ zero valued nodes on column $i$, $1 \leq i \leq (s+1)/2$;
\item For the cases (2), (3), (4), if $s \geq 3$, the index set of nonzero columns of $M$ corresponding to $G$ satisfies $C_{G} = \Gamma(G)$.
\end{enumerate}
\end{lemma}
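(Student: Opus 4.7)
The plan is to propagate zero information from column to column using the five-point stencil (\ref{sec3:eqn-in-one-stencil}), whose coefficients $\alpha_1^{(ij)},\ldots,\alpha_5^{(ij)}$ are all nonzero for $K\in\mathbb{R}^{(2nm-m)}\setminus(U_1\cup U_2)$. At a zero valued node the $\alpha_1$-term drops out, leaving a four-term identity in the neighbours: once three of the four neighbours are zero the fourth is forced to vanish, and if instead three are zero together with a single surviving nonzero contribution, the fourth neighbour is forced to be nonzero. Remark \ref{sec3:remark-0relation-in-one-stencil} is a symmetric instance of this principle, and I will apply the mechanism repeatedly.

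Part (i) is by contradiction. If $s=n$ then all of $g_1$ is zero, and at each $(1,j)$ the other three neighbours $(0,j),(1,j-1),(1,j+1)$ are zero (with the obvious boundary endpoints at $j\in\{1,n\}$, using $s\ge 2$), so the stencil forces $\varphi_{2,j}=0$; iterating on the column index yields $\varphi\equiv 0$, contradicting $\varphi^{\mathrm T}\varphi=1/(h_1h_2)$. For part (ii), connectedness inside the single column $g_1$ pins down $\gamma=\{(1,1),\ldots,(1,s)\}$ and makes $(1,s+1)$ nonzero. I then induct on $i$ with the hypothesis that on $g_i$ the zero valued nodes of $G$ are exactly $(i,1),\ldots,(i,s-i+1)$ and that $(i,s-i+2)$ is nonzero. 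At $(i,1),\ldots,(i,s-i)$ the three neighbours in the left column and in the two in-column directions are zero, so $\varphi_{i+1,1},\ldots,\varphi_{i+1,s-i}$ are forced to vanish; at the rightmost zero $(i,s-i+1)$ the surviving nonzero term $\alpha_4\varphi_{i,s-i+2}$ forces $\varphi_{i+1,s-i+1}\neq 0$. At $i=s$ only $(s,1)$ is zero, and its stencil then makes $\varphi_{s+1,1}\neq 0$; together with the nonzero side neighbours produced at every earlier step, the triangle $T=\{(i,j):1\le i\le s,\,1\le j\le s-i+1\}$ has a nonzero grid boundary and hence is the full connected component. The case $(1,n)\in\gamma$ is symmetric.

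For (iii) and (iv), the inner-part hypothesis gives $\gamma=\{(1,j_0),\ldots,(1,j_0+s-1)\}$ with both $(1,j_0-1)$ and $(1,j_0+s)$ nonzero, so the same stencil induction now carries two nonzero boundaries and shrinks the zero band by one on each side per column, yielding on $g_i$ exactly the nodes $(i,j_0+i-1),\ldots,(i,j_0+s-i)$, i.e.\ $s+2-2i$ zeros. When $s$ is even, the induction ends at $i=s/2$ with two zeros; their two stencils make the corresponding entries on $g_{s/2+1}$ nonzero, and all side neighbours along the way were already forced nonzero, so the pyramid is the full component of (iii). When $s$ is odd, it ends at $i=(s+1)/2$ with a single tip, where the two surviving horizontal contributions need not cancel and the upper neighbour is undetermined; hence only the connected-set assertion (iv) is available. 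For (v), I would observe that $\Gamma^{-1}(R_G)$ is the sub-pyramid of $G$ whose members have both vertical neighbours lying in $G$, and that its five-point grid neighbourhood inside $G_a$, weighted by the nonzero off-diagonals of $M=(1-t)A+D-\lambda I$, recovers exactly $G$; enumerating the row and column neighbours verifies $C_G=\Gamma(G)$. The hypothesis $s\ge 3$ is needed only in case (iii) with $s=2$, where $G=\gamma$ and both nodes have a nonzero vertical neighbour in $g_1$, making $R_G$ empty.

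The principal obstacle is the combinatorial bookkeeping in (iii)–(iv): the zero band has two moving nonzero boundaries that must be carried simultaneously through the induction, and the parity of $s$ dictates whether the band terminates cleanly (even, $G$ is the full component) or with an undetermined tip (odd, only a connected set). Everything else amounts to one stencil application at a time with generically nonzero coefficients, plus the neighbourhood enumeration for (v).
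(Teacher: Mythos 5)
Your proposal is correct and follows essentially the same route as the paper: column-by-column propagation of zeros via the five-point stencil (the paper's Remark \ref{sec3:remark-0relation-in-one-stencil}), giving the triangle/pyramid shapes with nonzero boundaries, the undetermined tip in the odd case, and the shrink-by-one-layer/expand-by-one-layer identification of $R_G$ and $C_G$ for part (v), which the paper carries out by appeal to its figures. The only nitpick is your characterization of $\Gamma^{-1}(R_G)$ as nodes with both vertical neighbours in $G$: for case (2) the bottom-row nodes ($j=1$ or $j=n$) belong to $R_G$ with only one in-grid vertical neighbour, but this enlarges the shrunken set in a way that leaves the conclusion $C_G=\Gamma(G)$ unchanged.
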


\begin{proof}
\begin{enumerate}[(i)]
\item If $s = n$, we have $\varphi_{11} = \varphi_{12} = \ldots = \varphi_{1n} = 0$. From the sign relationships among the components of $\varphi$ as stated in Remark \ref{sec3:remark-0relation-in-one-stencil}, the grid points in the column 2 are all zero valued nodes, namely, $\varphi_{21} = \varphi_{22} = \ldots = \varphi_{2n} = 0$. By induction, all the grid points are zero valued nodes, namely, $\varphi_{11} = \ldots  = \varphi_{1n} = \ldots = \varphi_{mn} = 0$, a contradiction with the condition that $\varphi^{T}\varphi \neq 0$. Therefore $s < n$.
\item If $\gamma$ starts from the point $(1,1)$, from the sign relationships among the components of $\varphi$, the zero valued nodes connecting $\gamma$ on the column 2 of grid points are $(2,j)$, $1 \leq j \leq s-1$. By induction, it can be seen that the zero valued connected component $G$ of $G^0$ containing $\gamma$ will end on column $s$. The zero valued nodes connecting $\gamma$ are illustrated in Fig \ref{sec3:fig-illustration-flag-1} and form a zero valued component, where black dot represents zero and white dot represents nonzero. Similarly, the case $\gamma$ ends at the point $(1,n)$ can be proved.
\begin{figure}
\caption{Illustration of a flag}
\begin {center}
\label{sec3:fig-illustration-flag-1}
\begin{tikzpicture}[xscale = 0.99,yscale = 0.6]
\draw[thick] (0,0) grid (7,7);
\node at (-0.4,0) {\tiny{$(1,1)$}};
\node at (-0.4,1) {\tiny{$(1,2)$}};
\node at (-0.7,3) {\tiny{$(1,s-1)$}};
\node at (-0.4,4) {\tiny{$(1,s)$}};
\node at (-0.7,5) {\tiny{$(1,s+1)$}};
\node at (-0.4,7) {\tiny{$(1,n)$}};
\draw[dashed] (1,4) to (4,1);
\draw[fill] (0,0) circle [radius = 0.08];
\draw[fill] (0,1) circle [radius = 0.08];
\draw[fill] (0,3) circle [radius = 0.08];
\draw[fill] (0,4) circle [radius = 0.08];
\draw (0,5) circle [radius = 0.1];
\node at (0.7,0.2) {\tiny{$(2,1)$}};
\draw[fill] (1,0) circle [radius = 0.08];
\draw[fill] (1,3) circle [radius = 0.08];
\draw (1,4) circle [radius = 0.1];
\node at (2.5,0.2) {\tiny{$(s-1,1)$}};
\draw[fill] (3,0) circle [radius = 0.08];
\draw[fill] (3,1) circle [radius = 0.08];
\node at (3.7,0.2) {\tiny{$(s,1)$}};
\draw[fill] (4,0) circle [radius = 0.08];
\draw (4,1) circle [radius = 0.08];
\node at (4.5,0.2) {\tiny{$(s+1,1)$}};
\draw (5,0) circle [radius = 0.08];
\node at (6.6,0.2) {\tiny{$(m,1)$}};
\end{tikzpicture}
\end{center}
\end{figure}

\item Assume that $\gamma$ starts from the point $(1,j_1)$ and ends at the point $(1,j_2)$ with $j_1 \geq 2$ and $j_2 \leq n-1$. From the sign relationships among the components of $\varphi$, the zero valued nodes connecting $\gamma$ on the column 2 of grid points are $(2,j)$, $3 \leq j \leq s-2$. The zero valued nodes connecting $\gamma$ are illustrated in Fig \ref{sec3:fig-illustration-flag-2}. The zero valued nodes connecting $\gamma$ will end on column $s/2$ with two zero valued nodes and form a zero valued component $G$.
\begin{figure}
\caption{Illustration of a flag}
\begin {center}
\label{sec3:fig-illustration-flag-2}
\begin{tikzpicture}[xscale = 1.1,yscale = 0.41]
\draw[thick] (0,0) grid (6,13);
\node at (-0.4,0) {\tiny{$(1,1)$}};
\node at (-0.6,1) {\tiny{$(1,t-1)$}};
\node at (-0.4,2) {\tiny{$(1,t)$}};
\node at (-0.6,3) {\tiny{$(1,t+1)$}};
\node at (-0.83,5) {\tiny{$(1,t+\frac{s}{2}-2)$}};
\node at (-0.83,6) {\tiny{$(1,t+\frac{s}{2}-1)$}};
\node at (-0.8,7) {\tiny{$(1,t+\frac{s}{2})$}};
\node at (-0.83,8) {\tiny{$(1,t+\frac{s}{2}+1)$}};
\node at (-0.83,10) {\tiny{$(1,t+s-2)$}};
\node at (-0.8,11) {\tiny{$(1,t+s-1)$}};
\node at (-0.6,12) {\tiny{$(1,t+s)$}};
\node at (-0.4,13) {\tiny{$(1,n)$}};
\draw[dashed] (1,2) to (4,5);
\draw[dashed] (1,3) to (3,5);
\draw[dashed] (1,10) to (3,8);
\draw[dashed] (1,11) to (4,8);
\draw (0,1) circle [radius = 0.12];
\draw[fill] (0,2) circle [radius = 0.08];
\draw[fill] (0,3) circle [radius = 0.08];
\draw[fill] (0,5) circle [radius = 0.08];
\draw[fill] (0,6) circle [radius = 0.08];
\draw[fill] (0,7) circle [radius = 0.08];
\draw[fill] (0,8) circle [radius = 0.08];
\draw[fill] (0,10) circle [radius = 0.08];
\draw[fill] (0,11) circle [radius = 0.08];
\draw (0,12) circle [radius = 0.12];
\node at (0.7,0.3) {\tiny{$(2,1)$}};
\draw(1,2) circle [radius = 0.12];
\draw[fill] (1,3) circle [radius = 0.08];
\draw[fill] (1,5) circle [radius = 0.08];
\draw[fill] (1,6) circle [radius = 0.08];
\draw[fill] (1,7) circle [radius = 0.08];
\draw[fill] (1,8) circle [radius = 0.08];
\draw[fill] (1,10) circle [radius = 0.08];
\draw (1,11) circle [radius = 0.12];
\node at (2.5,0.4) {\tiny{$(\frac{s}{2}-1,1)$}};
\draw[fill] (3,5) circle [radius = 0.08];
\draw[fill] (3,6) circle [radius = 0.08];
\draw[fill] (3,7) circle [radius = 0.08];
\draw[fill] (3,8) circle [radius = 0.08];
\node at (3.7,0.4) {\tiny{$(\frac{s}{2},1)$}};
\draw (4,5) circle [radius = 0.12];
\draw[fill] (4,6) circle [radius = 0.08];
\draw[fill] (4,7) circle [radius = 0.08];
\draw (4,8) circle [radius = 0.12];
\node at (6.4,0.2) {\tiny{$(m,1)$}};
\end{tikzpicture}
\end{center}
\end{figure}

\item Similar to the case (3), the zero valued nodes connecting $\gamma$ are illustrated in Fig \ref{sec3:fig-illustration-flag-3}. However, since $s$ is odd, the zero valued nodes connecting $\gamma$ arrives at one single point on column $(s+1)/2$. All these zero valued nodes connecting $\gamma$ from column 1 to column $(s+1)/2$ form a zero valued connected set $G$. It may be or may not be a zero valued connected component.

\begin{figure}
\caption{Illustration of a flag}
\begin {center}
\label{sec3:fig-illustration-flag-3}
\begin{tikzpicture}[xscale = 1.1,yscale = 0.41]
\draw[thick] (0,0) grid (6,12);
\node at (-0.4,0) {\tiny{$(1,10)$}};
\node at (-0.6,1) {\tiny{$(1,t-1)$}};
\node at (-0.4,2) {\tiny{$(1,t)$}};
\node at (-0.6,3) {\tiny{$(1,t+1)$}};
\node at (-0.8,5) {\tiny{$(1,t+\frac{s-3}{2})$}};
\node at (-0.8,6) {\tiny{$(1,t+\frac{s-1}{2})$}};
\node at (-0.8,7) {\tiny{$(1,t+\frac{s+1}{2})$}};
\node at (-0.8,9) {\tiny{$(1,t+s-2)$}};
\node at (-0.8,10) {\tiny{$(1,t+s-1)$}};
\node at (-0.6,11) {\tiny{$(1,t+s)$}};
\node at (-0.4,12) {\tiny{$(1,n)$}};
\draw[dashed] (1,2) to (4,5);
\draw[dashed] (1,3) to (3,5);
\draw[dashed] (1,9) to (3,7);
\draw[dashed] (1,10) to (4,7);
\draw (0,1) circle [radius = 0.12];
\draw[fill] (0,2) circle [radius = 0.08];
\draw[fill] (0,3) circle [radius = 0.08];
\draw[fill] (0,4) circle [radius = 0.08];
\draw[fill] (0,5) circle [radius = 0.08];
\draw[fill] (0,6) circle [radius = 0.08];
\draw[fill] (0,7) circle [radius = 0.08];
\draw[fill] (0,8) circle [radius = 0.08];
\draw[fill] (0,9) circle [radius = 0.08];
\draw[fill] (0,10) circle [radius = 0.08];
\draw (0,11) circle [radius = 0.12];
\draw (0,5) circle [radius = 0.08];
\node at (0.7,0.3) {\tiny{$(2,1)$}};
\draw(1,2) circle [radius = 0.12];
\draw[fill] (1,3) circle [radius = 0.08];
\draw[fill] (1,4) circle [radius = 0.08];
\draw[fill] (1,5) circle [radius = 0.08];
\draw[fill] (1,6) circle [radius = 0.08];
\draw[fill] (1,7) circle [radius = 0.08];
\draw[fill] (1,8) circle [radius = 0.08];
\draw[fill] (1,9) circle [radius = 0.08];
\draw (1,10) circle [radius = 0.12];
\node at (2.5,0.4) {\tiny{$(\frac{s-1}{2},1)$}};
\draw[fill] (3,5) circle [radius = 0.08];
\draw[fill] (3,6) circle [radius = 0.08];
\draw[fill] (3,7) circle [radius = 0.08];
\node at (3.55,0.4) {\tiny{$(\frac{s+1}{2},1)$}};
\draw (4,5) circle [radius = 0.12];
\draw[fill] (4,6) circle [radius = 0.08];
\draw (4,7) circle [radius = 0.12];
\node at (6.4,0.2) {\tiny{$(m,1)$}};
\end{tikzpicture}
\end{center}
\end{figure}
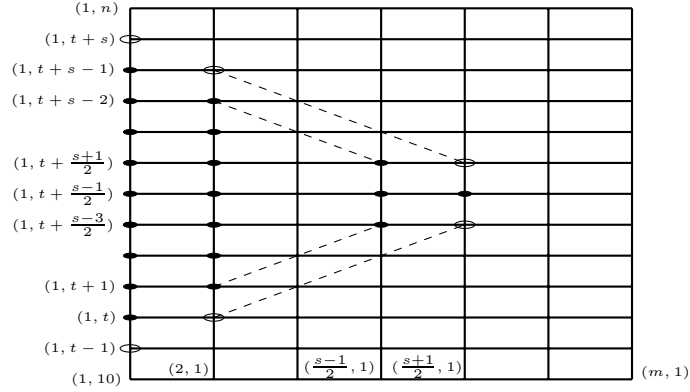

\item From the figures for cases (2), (3), (4), it can be seen that $\Gamma^{-1}(R_{G})$ is the set of zero valued nodes by shrinking $G$ one layer from its nonzero boundary. $\Gamma^{-1}(C_{G})$ is the set of zero valued nodes by extending $\Gamma^{-1}(R_{G})$ one layer towards the nonzero boundary, which is $G$ itself.
\end{enumerate}
\end{proof}

\begin{definition}
Let $\gamma$ be a zero valued connected component of $G^0 \bigcap g_1$ with $s$ points, $s \geq 2$. The zero valued connected set $G$ mentioned in (2), (3), (4) in Lemma \ref{sec3:lemma-span-of-zero-valued-nodes} is called a flag of $\gamma$, denoted as $F_{\gamma}$.
\end{definition}

\begin{lemma}
\label{sec3:lemma-union-row-full-rank-two-zero-sets}
Let $\gamma$ be a zero valued connected component of $G^0 \bigcap g_1$ of $s$ points with $s > 2$ an odd number and $F_{\gamma}$ be its flag. Let $G$ be another set of zero valued nodes. Let $I_1 = R_{F_{\gamma}}$ and $I_2 = R_G$ be the row index sets as defined in (\ref{sec3:def-RG}) such that $M(I_2,:)$ is row full rank.
Denote
\begin{align}
J_1 = \{ j\in R_a: M(I_1,j) \neq 0\}, \quad J_2 = \{ j\in R_a: M(I_2,j) \neq 0\},\\
J_{12} = J_1 \bigcap J_2.
\end{align}
Suppose that $J_{12}$ has only one element $k$ and that both the column $M(I_1,k)$ and the column $M(I_2,k)$ have only one nonzero element, denoted as $M(s_1,k) \neq 0$ and $M(s_2,k) \neq 0$ respectively. Let $(i_1,j_1)$ and $(i_2,j_2)$ be the grid point corresponding to $s_1$ and $s_2$ respectively. Suppose $i_2 = i_1 + 2$ and $j_1 = j_2$.  Then $[M(I_1,:);M(I_2,:)]$ is row full rank.
\end{lemma}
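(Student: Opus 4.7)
The plan is to reduce the conclusion to Lemma \ref{sec3:lemma-union-row-full-rank} by exhibiting non-singular square submatrices $M(I_1,J_1^*)$ and $M(I_2,J_2^*)$ whose column index sets are disjoint. Because $J_1\cap J_2=\{k\}$, disjointness reduces to arranging $k\notin J_1^*$ (or, symmetrically, $k\notin J_2^*$). The effort therefore centers on showing that $M(I_1,J_1\setminus\{k\})$ still has rank $|I_1|$; once this is done, one selects $J_1^*\subseteq J_1\setminus\{k\}$, takes any $J_2^*\subseteq J_2$ that renders $M(I_2,J_2^*)$ non-singular (such exists by hypothesis), and applies Lemma \ref{sec3:lemma-union-row-full-rank}.

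The key is to show both that $M(I_1,:)$ is row full rank and that column $k$ is redundant within $M(I_1,:)$. Using Lemma \ref{sec3:lemma-span-of-zero-valued-nodes}(iv)-(v), the rows of $I_1$ correspond to interior grid points of the shrunken flag $\Gamma^{-1}(I_1)$; sweeping these rows from the neighborhood of the tip at column $(s+1)/2$ back toward column $1$, the diagonal stencil coefficients $\alpha_1^{(i,j)}\neq 0$ produce a block-triangular structure with non-vanishing diagonal, giving the row full rank of $M(I_1,:)$. For the redundancy of $k$, the geometric hypothesis $i_2=i_1+2$, $j_2=j_1$ pins down $k=\Gamma(i_1+1,j_1)$ as the rightmost flag point on row $j_1$; moreover $s_1\in R_{F_\gamma}$ forces $\varphi_{i_1,j_1\pm 1}=0$, so $(i_1,j_1)$, $(i_1,j_1\pm 1)$, and (when $i_1\geq 2$) $(i_1-1,j_1)$ all lie in $F_\gamma$. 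Consequently the row $M(s_1,:)$ carries nonzero entries at columns $\Gamma(i_1,j_1)$, $\Gamma(i_1-1,j_1)$ and $\Gamma(i_1,j_1\pm 1)$, all contained in $J_1\setminus\{k\}$. Since $M(I_1,k)=M(s_1,k)\,e_{s_1}$, showing $M(I_1,J_1\setminus\{k\})$ has rank $|I_1|$ amounts to showing $e_{s_1}$ belongs to its column span, which reduces to a local solvability identity at $(i_1,j_1)$ using these four neighboring columns.

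The main obstacle will be this redundancy step: verifying that $e_{s_1}$ is expressible as a column combination of $M(I_1,J_1\setminus\{k\})$ uniformly across all admissible odd-$s$ flag configurations. In the base case $s=3$ (so $|I_1|=1$) this is immediate since $M(s_1,\Gamma(i_1,j_1))=\alpha_1^{(i_1,j_1)}\neq 0$; for $s\geq 5$ one must combine the local solvability near $(i_1,j_1)$ with the block-triangular structure of the rest of $M(I_1,J_1\setminus\{k\})$, descending through the shrunken flag layer by layer. Once $J_1^*\subseteq J_1\setminus\{k\}$ is secured, $J_1^*\cap J_2^*\subseteq(J_1\setminus\{k\})\cap J_2=\emptyset$, and a direct application of Lemma \ref{sec3:lemma-union-row-full-rank} with the row blocks $I_1,I_2$ and column supports $J_1^*,J_2^*$ yields that $[M(I_1,:);M(I_2,:)]$ is row full rank.
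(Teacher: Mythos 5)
Your overall reduction scheme is reasonable in outline: if you can choose pivot columns $J_1^*\subseteq J_1\setminus\{k\}$ with $M(I_1,J_1^*)$ nonsingular, then since $(J_1\setminus\{k\})\cap J_2=\emptyset$ you get $M(I_2,J_1^*)=0$, and a block-triangular determinant argument (this is what you actually need, not literally Lemma \ref{sec3:lemma-union-row-full-rank}, whose hypothesis concerns the \emph{full} column supports $E_i$) finishes the proof. But the proposal stands or falls on the redundancy claim that $M(I_1,J_1\setminus\{k\})$ still has rank $|I_1|$, and this is not established. The only case you argue completely is $s=3$, and there the justification is wrong: you assert $M(s_1,\Gamma(i_1,j_1))=\alpha_1^{(i_1,j_1)}\neq 0$, but $\alpha_1^{(i_1,j_1)}=(1-t)a_{j_1j_1}^{(i_1)}+\frac{1}{h_1^2}+\frac{1}{h_2^2}+v_{i_1j_1}-\lambda$ (recall $\varphi_{i_1j_1}=0$) can perfectly well vanish at an admissible $(\varphi,\lambda,t,K)$; the lemma must hold for \emph{every} $K\in\mathbb{R}^{2nm-m}\setminus(U_1\cup U_2)$, so genericity cannot be invoked inside this proof, and indeed all rank arguments in the paper (Lemma \ref{sec3:lemma-row-full-rank-O1-Om} and the pivots used in the proof of this lemma) deliberately rely only on the off-diagonal couplings $-\frac{1}{2h_1^2}$ and $(1-t)a^{(i)}_{j,j+1}-\frac{1}{2h_2^2}$, which are nonzero by construction. (The $s=3$ case can be repaired via the $y$-coupling columns $\Gamma(i_1,j_1\pm1)$, but as written your argument for it is invalid.)

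For $s\geq 5$ the redundancy step is exactly where the difficulty sits, and you only sketch a plan. The natural pivots for the rows of $R_{F_\gamma}$ are the $x$-right-neighbour columns (these are the nonzero diagonal of $W(1:\frac{(s-1)^2}{4},\,s+1:\frac{(s+1)^2}{4})$ in the paper's proof), and for the row $s_1=\Gamma(i_1,j_1)$ that pivot is precisely column $k$; every alternative entry of row $s_1$ that is guaranteed nonzero, namely those in columns $\Gamma(i_1-1,j_1)$ and $\Gamma(i_1,j_1\pm1)$, is already the right-neighbour pivot of another row of $R_{F_\gamma}$ ($\Gamma(i_1-2,j_1)$, resp.\ $\Gamma(i_1-1,j_1\pm1)$), so a local swap fails and one needs a cascading reassignment pushed out to the outer columns of grid column $1$ (this does seem to work, e.g.\ for $s=5$ one can pivot $s_1$ on $\Gamma(i_1,j_1-1)$ and move the displaced row onto $\Gamma(1,j_1-2)$), but ``descending through the shrunken flag layer by layer'' is a plan, not a proof, and you give no argument that the resulting square submatrix is nonsingular rather than merely possessing a nonzero transversal. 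Note also that this is a genuinely different route from the paper's: the paper keeps $k$ as the pivot of the flag block, carries out the column and row eliminations explicitly ($W\to\overline{W}\to\overline{\overline{W}}$), and shows the elimination leaves a nonzero fill-in $\times$ in the connecting row in a column of grid column $1$, after which the two blocks have disjoint supports and Lemma \ref{sec3:lemma-union-row-full-rank} applies; your route would avoid the fill-in analysis, but it transfers all of the difficulty into the unproven redundancy claim, so as it stands there is a genuine gap.
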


\begin{proof}
Note that since $J_{12}$ is not empty, the zero valued connected component $\gamma$ corresponds to the case (4) of Lemma \ref{sec3:lemma-span-of-zero-valued-nodes}. Geometrically, for that $k$, the two grid points $(i_1,j_1)$ and $(i_2,j_2)$ corresponding to $M(s_1,k) \neq 0$ and $M(s_2,k) \neq 0$ lie in the same row of grid points with distance 2 in the sense that $|i_2 - i_1| + |j_2 - j_1| = 2$, as illustrated locally in Figure \ref{sec3:fig-flag-connection}.
\begin{figure}
\caption{Local illustration of connection with a flag}
\begin {center}
\label{sec3:fig-flag-connection}
\begin{tikzpicture}[xscale = 0.7,yscale = 0.7]
\draw[thick] (0,0) grid (2,2);
\draw[fill] (0,0) circle [radius = 0.1];
\draw[fill] (0,1) circle [radius = 0.1];
\draw[fill] (0,2) circle [radius = 0.1];
\draw[fill] (2,0) circle [radius = 0.1];
\draw[fill] (2,1) circle [radius = 0.1];
\draw[fill] (2,2) circle [radius = 0.1];
\draw[fill] (1,1) circle [radius = 0.1];
\node at (-0.7,1) {\tiny{$(i_1,j_1)$}};
\node at (2.7,1) {\tiny{$(i_2,j_2)$}};
\end{tikzpicture}
\end{center}
\end{figure}
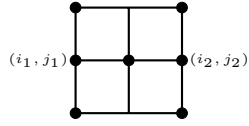
Note that by Lemma \ref{sec3:lemma-row-full-rank-O1-Om}, $M(I_1,:)$ is row full rank.
By row permutations and column permutations, $[M(I_1,:);M(I_2,:)]$ is transformed to the following form, denoted as $W$,

$
$\setlength{\arraycolsep}{2pt}$
$\renewcommand{\arraystretch}{0.1}$
\begin{array}{l@{\hspace{-5pt}}llll}
\begin{array}{@{\hspace{10pt}}l@{\hspace{4pt}}l@{\hspace{2pt}}l@{\hspace{18pt}}l}
\overbrace{\hphantom{\begin{array}{ccccccc}
\ast&\ast&\ast&\ddots&\ddots&\ddots&\ast\end{array}}}^{\displaystyle s}
& \overbrace{\hphantom{\begin{array}{lcccc}
\ast&\ddots&\ddots& \ast&\ast  \end{array}}}^{\displaystyle s-2}
& \overbrace{\hphantom{\begin{array}{lcc}
\ast&\ddots& \ast \end{array}}}^{\displaystyle s-4}
&\overbrace{\hphantom{\begin{array}{lcc}
\ast & \ast&\ast  \end{array}}}^{\displaystyle 3}

\end{array}
&\\
\left(\begin{array}{ccccccc|ccccc|ccc|c|ccc|c|cc}

    \ast&\ast&\ast&&&&&\ast&&&&&&&&&&&&&&\\
    &\ast&\ast&\ast&&&&&\ast&&&&&&&&&&&&&\\
    &&\ast&\ast&\ast&&&&&\ast&&&&&&&&&&&&\\
    &&&\ddots&\ddots&\ddots&&&&&\ddots&&&&&&&&&&&\\
    &&&&\ast&\ast&\ast&&&&&\ast&&&&&&&&&\\
    &&\ast&&&&&\ast&\ast&\ast&&&\ast&&&&&&&&&\\
    &&&\ddots&&&&&\ddots&\ddots&\ddots&&&\ddots&&&&&&&&\\
    &&&&\ast&&&&&&\ast&\ast&&&\ast&&&&&&&\\
    &&&&&&&&&\ddots&&&&\ddots&&\ddots&&&&&&\\
    &&&&&&&&&&&&&&&&\ast&&&&&\\
    &&&&&&&&&&&&&&&&&\ast&&&&\\
    &&&&&&&&&&&&&&&&&&\ast&&&\\
    &&&&&&&&&&&&&&&\cdots&\ast&\ast&\ast&\ast&&\\ \hline
   &&&&&&&&&&&&&&&&&&&\ast&\ast&\cdots\\
 &&&&&&&&&&&&&&&&&&&&\ast&\\
 &&&&&&&&&&&&&&&&&&&&&\ddots
 \end{array}\right) &
\begin{array}{l}
\vspace{35pt}
\left.\vphantom{\begin{array}{c}
  \ast\\
\ast\\
\ast\\
\ddots\\
\ast\\
\ast\\
\ddots\\
\ast\\
\ddots\\
\ast\\
\ast\\
\ast\\
\ast \end{array}}
\right\}\frac{(s-1)^2}{4}
\end{array}
\end{array}.
$

Note that the diagonal entries of the submatrix $W(1:\frac{(s-1)^2}{4},s+1:\frac{(s+1)^2}{4})$ are all nonzero. Therefore,
by elementary matrix transformations on the first $\frac{(s+1)^2}{4}$ columns, the above matrix $W$ is transformed to the following, denoted as $\overline{W}$,

$
$\setlength{\arraycolsep}{2pt}$
$\renewcommand{\arraystretch}{0.1}$
\begin{array}{l@{\hspace{-5pt}}llll}
\begin{array}{@{\hspace{1pt}}l@{\hspace{2pt}}l@{\hspace{2pt}}l@{\hspace{15pt}}l}
\overbrace{\hphantom{\begin{array}{ccccccc}
\ast&\ast&\ast&\ddots&\ddots&\ddots&\ast\end{array}}}^{\displaystyle s}
& \overbrace{\hphantom{\begin{array}{lcccc}
\ast&\ddots&\ddots& \ast&\ast  \end{array}}}^{\displaystyle s-2}
& \overbrace{\hphantom{\begin{array}{lcc}
\ast&\ddots& \ast \end{array}}}^{\displaystyle s-4}
&\overbrace{\hphantom{\begin{array}{lcc}
\ast & \ast&\ast  \end{array}}}^{\displaystyle 3}

\end{array}
&\\
\left(\begin{array}{ccccccc|ccccc|ccc|c|ccc|c|cc}

    &&&&&&&\ast&&&&&&&&&&&&&&\\
    &&&&&&&&\ast&&&&&&&&&&&&&\\
    &&&&&&&&&\ast&&&&&&&&&&&&\\
    &&&&&&&&&&\ddots&&&&&&&&&&&\\
    &&&&&&&&&&&\ast&&&&&&&&&\\
    &&&&&&&&&&&&\ast&&&&&&&&&\\
    &&&&&&&&&&&&&\ddots&&&&&&&&\\
    &&&&&&&&&&&&&&\ast&&&&&&&\\
    &&&&&&&&&&&&&&&\ddots&&&&&&\\
    &&&&&&&&&&&&&&&&\ast&&&&&\\
    &&&&&&&&&&&&&&&&&\ast&&&&\\
    &&&&&&&&&&&&&&&&&&\ast&&&\\
    &&&&&&&&&&&&&&&&&&&\ast&&\\ \hline
   \times&\ast&\ast&\cdots&\ast&\ast&\ast&\ast&\ast&\cdots&\ast&\ast&\ast&\cdots&\ast&\cdots&\ast&\ast&\ast&\ast&\ast&\cdots\\
 &&&&&&&&&&&&&&&&&&&&\ast&\\
 &&&&&&&&&&&&&&&&&&&&&\ddots
 \end{array}\right) &
\begin{array}{l}
\vspace{35pt}
\left.\vphantom{\begin{array}{c}
  \ast\\
\ast\\
\ast\\
\ddots\\
\ast\\
\ast\\
\ddots\\
\ast\\
\ddots\\
\ast\\
\ast\\
\ast\\
\ast \end{array}}
\right\}\frac{(s-1)^2}{4}
\end{array}
\end{array}.$

By elementary matrix transformations on the first $\frac{(s-1)^2}{4}+1$ rows, the above matrix $\overline{W}$ is further transformed to the following, denoted as $\overline{\overline{W}}$,

$
$\setlength{\arraycolsep}{2.3pt}$
$\renewcommand{\arraystretch}{0.1}$
\begin{array}{l@{\hspace{-5pt}}llll}
\begin{array}{@{\hspace{0.1pt}}l@{\hspace{1pt}}l@{\hspace{1pt}}l@{\hspace{9pt}}l}
\overbrace{\hphantom{\begin{array}{ccccccc}
\ast&\ast&\ast&\ddots&\ddots&\ddots&\ast\end{array}}}^{\displaystyle s}
& \overbrace{\hphantom{\begin{array}{lcccc}
\ast&\ddots&\ddots& \ast&\ast  \end{array}}}^{\displaystyle s-2}
& \overbrace{\hphantom{\begin{array}{lcc}
\ast&\ddots& \ast \end{array}}}^{\displaystyle s-4}
&\overbrace{\hphantom{\begin{array}{lcc}
\ast & \ast&\ast  \end{array}}}^{\displaystyle 3}

\end{array}
&\\
\left(\begin{array}{ccccccc|ccccc|ccc|c|ccc|c|cc}

    &&&&&&&\ast&&&&&&&&&&&&&&\\
    &&&&&&&&\ast&&&&&&&&&&&&&\\
    &&&&&&&&&\ast&&&&&&&&&&&&\\
    &&&&&&&&&&\ddots&&&&&&&&&&&\\
    &&&&&&&&&&&\ast&&&&&&&&&\\
    &&&&&&&&&&&&\ast&&&&&&&&&\\
    &&&&&&&&&&&&&\ddots&&&&&&&&\\
    &&&&&&&&&&&&&&\ast&&&&&&&\\
    &&&&&&&&&&&&&&&\ddots&&&&&&\\
    &&&&&&&&&&&&&&&&\ast&&&&&\\
    &&&&&&&&&&&&&&&&&\ast&&&&\\
    &&&&&&&&&&&&&&&&&&\ast&&&\\
    &&&&&&&&&&&&&&&&&&&\ast&&\\ \hline
   \times&\ast&\ast&\cdots&\ast&\ast&\ast&&&&&&&&&&&&&&\ast&\cdots\\
 &&&&&&&&&&&&&&&&&&&&\ast&\\
 &&&&&&&&&&&&&&&&&&&&&\ddots
 \end{array}\right) &
\begin{array}{l}
\vspace{35pt}
\left.\vphantom{\begin{array}{c}
  \ast\\
\ast\\
\ast\\
\ddots\\
\ast\\
\ast\\
\ddots\\
\ast\\
\ddots\\
\ast\\
\ast\\
\ast\\
\ast \end{array}}
\right\}\frac{(s-1)^2}{4}
\end{array}
\end{array}.$

Note that in the row $\frac{(s-1)^2}{4}+1$, at least one element $\overline{\overline{W}}(\frac{(s-1)^2}{4}+1,1)$, denoted as $'\times'$, is not zero. Therefore, the lower submatrix $\overline{\overline{W}}(\frac{(s-1)^2}{4}+1:end,:)$ is still row full rank. Now Lemma \ref{sec3:lemma-union-row-full-rank} can be applied to conclude that $\overline{\overline{W}}$ is row full rank. Therefore $[M(I_1,:);M(I_2,:)]$ is row full rank.
\end{proof}

\begin{theorem}
\label{sec3:thm-0-is-regular-value}
For the homotopy $H:\mathbb{R}^{mn}\times \mathbb{R}\times [0,1) \rightarrow \mathbb{R}^{mn}\times \mathbb{R}$ in (\ref{sec3:eqn-homotopy-3diagonal}), $\forall n\in \mathbb{N}^{+}$, $m \ge n \ge 6$, for almost all $K \in  \mathbb{R}^{(2nm-m)}\setminus (U_1\cup U_2)$,
\begin{enumerate}[(i)]
\item 0 is a regular value of $H$ defined in $(\ref{sec3:eqn-homotopy-3diagonal})$ and therefore the homotopy paths corresponding to different initial points do not intersect each other for $t\in[0,1)$;
\item Every homotopy path $(\varphi(s),\lambda(s),t(s)) \subset H^{-1}(0)$ is bounded.
\end{enumerate}
\end{theorem}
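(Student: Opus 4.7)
The plan is to mirror the structure of the proof of Theorem \ref{sec 3: thm-regular one dimen}, using the zero-valued-connected-component/flag machinery of Lemmas \ref{sec3:lemma-row-full-rank-BR1}--\ref{sec3:lemma-union-row-full-rank-two-zero-sets} to absorb the new difficulties in 2D. Part (ii) is essentially identical to the 1D argument: any zero of $H$ satisfies $\varphi^{\mathrm{T}}\varphi = 1/(h_1h_2)$, so $\|\varphi\|$ is constant along every path; multiplying the first block of $H=0$ by $\varphi^{\mathrm{T}}$ gives $\lambda = h_1h_2\bigl((1-t)\varphi^{\mathrm{T}}A(K)\varphi + \varphi^{\mathrm{T}}D\varphi + t\beta\varphi^{\mathrm{T}}\varphi^{3}\bigr)$, whence $|\lambda|\le\rho(A(K))+\rho(D)+\beta/(h_1h_2)$, a bound independent of $t$.

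For part (i) I would invoke the Parametrized Sard's Theorem \ref{sec 3:Parametrized Sard's Theorem} for $\tilde{H}$ of (\ref{sec3: mapping-related-to-homotopy-2D}), so that everything reduces to checking that at every zero $(\varphi,\lambda,t,K)$ the Jacobian (\ref{sec3:jacobi-matrix-for-tridiagonal}) is row full rank. The discussion preceding the statement already does most of the algebraic work: Lemma \ref{sec3:lemma-row-full-rank-BR1} handles the $R^*$ block via the rightmost column-block $(1-t)B$; the identity $M(R^{0},:)\varphi = 0$ coming from $\tilde{H}=0$, together with the appearance of $-\varphi^{\mathrm{T}}$ as the last row, collapses the $R^0$ part to requiring that $M(R^{0},:)$ alone be row full rank, where $M=(1-t)A(K)+D-\lambda I$.

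I would prove this row-full-rank claim by decomposing $G^{0}$ into its zero valued connected components and classifying each. A component $C_\alpha$ disjoint from both $g_1$ and $g_m$ contributes rows lying in $O_1 \cap O_m$; Lemma \ref{sec3:lemma-row-full-rank-O1-Om} then furnishes the triangular structure needed for row full rank, with column footprint confined to $\Gamma(C_\alpha)$ plus a one-layer neighbourhood. A component meeting $g_1$ is, by Lemma \ref{sec3:lemma-span-of-zero-valued-nodes}, a flag $F_\gamma$ of a connected piece $\gamma \subset G^{0} \cap g_1$, with the symmetric statement for $g_m$. For the boundary-flag case and the even-interior-flag case, part (5) of Lemma \ref{sec3:lemma-span-of-zero-valued-nodes} gives $C_{F_\gamma} = \Gamma(F_\gamma)$, so distinct flags have disjoint column footprints and are column-disjoint from the interior components. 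The odd-interior-flag case is where two distinct components can share one column index through the apex, which is exactly what Lemma \ref{sec3:lemma-union-row-full-rank-two-zero-sets} resolves. Finally Lemma \ref{sec3:lemma-union-row-full-rank}, whose column-disjointness hypothesis is exactly what the classification delivers, combines the blocks and yields the desired row full rank of $M(R^{0},:)$. Consequently 0 is a regular value of $\tilde{H}$ and, by Theorem \ref{sec 3:Parametrized Sard's Theorem}, also of $H$ for almost every $K$; the excluded set $U_1$ (Lemma \ref{sec3:multiple-root}) guarantees the starting points at $t=0$ are regular and the excluded set $U_2$ keeps the subdiagonal and superdiagonal entries of $(1-t)A(K)+D$ negative on $[0,1)$.

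I expect the main obstacle to be precisely the odd-flag case: the apex shares a single grid column with a potentially different connected component, so the associated row block is not automatically column-disjoint from the rest of the decomposition and the block-diagonal assembly of Lemma \ref{sec3:lemma-union-row-full-rank} cannot be invoked directly. The elementary row and column reductions that propagate the nonzero pattern along the flag and expose the key entry marked $\times$ in the diagram of Lemma \ref{sec3:lemma-union-row-full-rank-two-zero-sets} form the most delicate step, and the hypothesis $m\ge n\ge 6$ is exactly what prevents several such flags from colliding, so that the local two-block argument can be iterated over all components without overlap.
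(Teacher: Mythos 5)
Your proposal is correct and follows essentially the same route as the paper: part (ii) is the same Rayleigh-quotient bound as in 1D, and part (i) reduces via the Parametrized Sard's Theorem and Lemma \ref{sec3:lemma-row-full-rank-BR1} to showing $M(R^0,:)$ is row full rank, which the paper likewise proves by splitting off the flags attached to $g_1$, applying Lemma \ref{sec3:lemma-row-full-rank-O1-Om} to each block, and gluing the single-column overlap of the odd-flag apex recursively with Lemma \ref{sec3:lemma-union-row-full-rank-two-zero-sets} (with Lemma \ref{sec3:lemma-union-row-full-rank} handling the column-disjoint blocks). The only cosmetic difference is bookkeeping: the paper does not decompose $G^0$ into its connected components with a symmetric treatment of $g_m$, but into the $g_1$-flags $F_{\gamma_i}$ plus one remainder set $\overline{G}$ whose rows all lie in $O_1$, which avoids needing any $g_m$ analogue of Lemmas \ref{sec3:lemma-span-of-zero-valued-nodes} and \ref{sec3:lemma-union-row-full-rank-two-zero-sets}.
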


\begin{proof}
(i). It suffices to prove that $\forall (\varphi,\lambda,t,K)\in \mathbb{R}^{mn}\times \mathbb{R} \times [0,1)\times  \mathbb{R}^{(2nm-m)}\setminus (U_1\cup U_2)$ satisfying $ \tilde{H} (\varphi,\lambda,t,K) = 0$, $((1-t)A + D - \lambda I)(R^0,:)$ is row full rank, or in short hand notation $M(R^0,:)$ is row full rank. Note that $R^0$ corresponds to the zero valued nodes $G^0$. If $G^0 \bigcap g_1 = \emptyset$, then $R^0 \subset O_1$. By Lemma \ref{sec3:lemma-row-full-rank-O1-Om} $M(R^0,:)$ is row full rank. Next $G^0 \bigcap g_1 \neq \emptyset$ is assumed. If the zero valued connected components of the set $G^0 \bigcap g_1$ are all single point sets, $R^0$ is a subset of $O_1$, and again by Lemma \ref {sec3:lemma-row-full-rank-O1-Om}, $M(R^0,:)$ is row full rank.

Assume that there are $q$ zero valued connected components of the set $G^0 \bigcap g_1$, each of which has more than one point. These connected components of the set $G^0 \bigcap g_1$ are denoted as $\gamma_i$, with corresponding flags $G_i = F_{\gamma_i}$, $i=1,\cdots,q$. Denote
\begin{align}
\overline{G} = G_a \setminus (\bigcup_{i=1}^{q}G_i),\\
\overline{R} = R_{\overline{G}}, \quad \overline{C} = C_{\overline{G}}, \quad R_i = R_{G_i}, \quad C_i = C_{G_i},\quad\quad\forall 1 \leq i \leq q.
\end{align}
By Lemma \ref{sec3:lemma-span-of-zero-valued-nodes}, it can be verified that
\begin{align}
R^0 = \overline{R} \bigcup \left( \bigcup_{i=1}^{q} R_{i} \right),\\
R_{i} \bigcap \overline{R} = \emptyset, \quad\forall 1 \leq i \leq q,\\
R_{i} \bigcap R_{j} =\emptyset, \quad C_{i} \bigcap C_{j} = \emptyset, \quad\forall i \neq j.
\end{align}
Note that if a zero valued connected component $\gamma$ of $G^0 \bigcap g_1$ has only two nodes, then the flag of $\gamma$ is the $\gamma$ itself, and $R_{\gamma} = \emptyset$. Assume that all the zero valued connected components $\gamma_i$ have more than two nodes. Since
\begin{align}
\overline{R} \subset O_1, \quad R_{i} \subset O_m, \quad \forall 1 \leq i \leq q,
\end{align}
by Lemma \ref{sec3:lemma-row-full-rank-O1-Om}, all of $M(\overline{R},:)$ and $M(R_{i},:)$, $1 \leq i \leq q$, are row full rank. It is possible that for some  $\gamma_i$, $C_{i} \bigcap \overline{C} \neq \emptyset$. If so, $\gamma_i$ is the case described in case $(4)$ in Lemma \ref{sec3:lemma-span-of-zero-valued-nodes}, $C_{i} \bigcap \overline{C}$ has only one element and the corresponding grid points are illustrated in Figure \ref{sec3:fig-local-illustration-of-a-connection}.
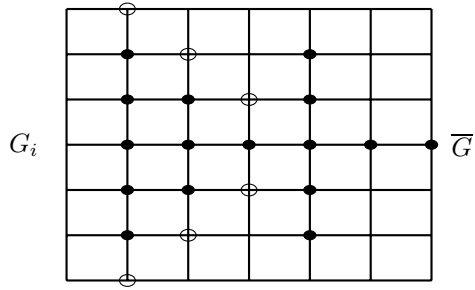
\begin{figure}
\caption{Local illustration of a connection}
\begin {center}
\label{sec3:fig-local-illustration-of-a-connection}
\begin{tikzpicture}[xscale = 0.8,yscale = 0.6]
\draw[thick] (0,0) grid (6,6);
\draw (1,0) circle [radius = 0.13];
\draw[fill] (1,1) circle [radius = 0.1];
\draw[fill] (1,2) circle [radius = 0.1];
\draw[fill] (1,3) circle [radius = 0.1];
\draw[fill] (1,4) circle [radius = 0.1];
\draw[fill] (1,5) circle [radius = 0.1];
\draw (1,6) circle [radius = 0.13];
\draw (2,1) circle [radius = 0.13];
\draw[fill] (2,2) circle [radius = 0.1];
\draw[fill] (2,3) circle [radius = 0.1];
\draw[fill] (2,4) circle [radius = 0.1];
\draw (2,5) circle [radius = 0.13];
\draw (3,2) circle [radius = 0.13];
\draw[fill] (3,3) circle [radius = 0.1];
\draw (3,4) circle [radius = 0.13];
\draw[fill] (4,1) circle [radius = 0.1];
\draw[fill] (4,2) circle [radius = 0.1];
\draw[fill] (4,3) circle [radius = 0.1];
\draw[fill] (4,4) circle [radius = 0.1];
\draw[fill] (4,5) circle [radius = 0.1];
\draw[fill] (5,3) circle [radius = 0.1];
\draw[fill] (6,3) circle [radius = 0.1];
\node at (-0.7,3) {$G_i$};
\node at (6.5,3) {{$\overline{G}$}};
\end{tikzpicture}
\end{center}
\end{figure}
Without loss of generality, suppose that all the $\gamma_i$ satisfying such property are the first $p$ $\gamma_i$, i.e.,
\begin{align}
C_{i} \bigcap \overline{C} \neq \emptyset,\quad 1 \leq i \leq p,\\
C_{i} \bigcap \overline{C} = \emptyset,\quad p+1 \leq i \leq q.
\end{align}
Denote
\begin{align}
G & = \overline{G} \bigcup \left( \bigcup_{i=p+1}^{q}G_i \right),\\
Z(G_k,\ldots,G_1,G) & = [M(R_k,:);\ldots;M(R_1,:);M(R_G,:)],\quad \forall 1\le k \le p.
\end{align}
Then $M(R^0,:) = Z(G_p,\ldots,G_1,G)$. The claim that $M(R^0,:)$ is row full rank will be proved by recursion.

Firstly, prove $Z(G_1,G)$ is row full rank. Take $I_1 = R_{1}$ and $I_2 = R_{G}$. The conditions of Lemma \ref{sec3:lemma-union-row-full-rank-two-zero-sets} are satisfied. Thus $[M(R_1,:);M(R_G,:)]$ is row full rank.

Secondly, prove $Z(G_{k+1},\ldots,G_1,G)$ is row full rank if $Z(G_k,\ldots,G_1,G)$ is, $\forall 1\le k < p$. Take $I_1 = R_{k+1}$ and $I_2 = R_{k} \cup \cdots \cup R_{1} \cup R_{G}$. Since $C_{k+1} \cap C_{i} = \emptyset$, $1 \leq i \leq k$, and $C_{k+1} \bigcap C_{G}$ has only one element, $I_1$ and $I_2$ satisfy the conditions of Lemma \ref{sec3:lemma-union-row-full-rank-two-zero-sets}. Thus $[M(R_k,:);\ldots;M(R_1,:);M(R_G,:)]$ is row full rank.

(ii). Similar to the one dimensional case, that $H^{-1}(0)$ is bounded can be proved also.
\end{proof}

\subsubsection{The homotopy with random pentadiagonal matrix}

A homotopy with random pentadiagonal matrix is also possible. Specifically, for $\tilde{H}$ defined in (\ref{sec3: mapping-related-to-homotopy-2D}), replacing $A(K)$ with $\overline{A}(\overline{K})$, where $\overline{A}(\overline{K}) \in \mathbb{R}^{mn \times mn}$ is a random pentadiagonal matrix with the same sparse structure as $D$, namely,
\begin{equation*}
\overline{A}(\overline{K})=
\setlength{\arraycolsep}{0.5pt}
 \renewcommand{\arraystretch}{0.1}
\left(\begin{array}{cccc}
    A_{1}&\overline{A}_1&&\\
    \overline{A}_1&A_{2}&\overline{A}_2&\\
    &&\ddots&\overline{A}_{m-1}\\
    &&\overline{A}_{m-1}&A_{m}
    \end{array}\right),
\end{equation*}
where
\begin{equation*}
A_{i}=
\left(
\begin{array}{ccccc}
a_{11}^{(i)} & a_{12}^{(i)}&&&\\
a_{12}^{(i)} & a_{22}^{(i)} & a_{23}^{(i)} &&\\
& a_{23}^{(i)} & a_{33}^{(i)} &\ddots&\\
&& \ddots & \ddots & a_{n-1,n}^{(i)}\\
&&& a_{n-1,n}^{(i)} & a_{nn}^{(i)}
\end{array}\right),\quad
\overline{A}_{i}=
\left(
\begin{array}{ccccc}
b_{1}^{(i)} & &&&\\
 & b_{2}^{(i)} & &&\\
& & b_{3}^{(i)} & &\\
&& & \ddots & \\
&&& & b_{n}^{(i)}
\end{array}\right),
\end{equation*}

\begin{align}
\nonumber
\overline{K} =\left( K_1,K_2,\ldots,K_m,\overline{K}_1,\ldots,\overline{K}_{m-1} \right)^{\mathrm{T}},\\
\nonumber
K_i =\left( a_{11}^{(i)},a_{12}^{(i)},a_{22}^{(i)},a_{23}^{(i)},\ldots,a_{n-1,n-1}^{(i)},a_{n-1,n}^{(i)},a_{nn}^{(i)} \right),~ i=1,\ldots,m,\\
\nonumber
\overline{K}_i =\left( b_{1}^{(i)},b_{2}^{(i)},\ldots,b_{n}^{(i)} \right), ~i=1,\ldots,m-1.
\end{align}

The Jacobian matrix of $\tilde{H}(\varphi,\lambda,t,\overline{K})$, $\frac{\partial \tilde{H}}{\partial(\varphi,\lambda,t,\overline{K})}$, is :
\begin{equation}
\label{sec3:jacobi-matrix-for-pentadiagonal}
\nonumber
\setlength{\arraycolsep}{8pt}
\renewcommand{\arraystretch}{0.45}
\left(
\begin{array}{cccc}
(1-t)\overline{A}(\overline{K})+D+3t\beta \mbox{diag}(\varphi^2)-\lambda I &-\varphi& \beta\varphi^3-\overline{A}(\overline{K})\varphi&(1-t)\overline{B}\\
                                              -\varphi^\mathrm{T}&0&0&0
\end{array}
\right),
\end{equation}
where $\overline{B}=\frac{\partial (\overline{A}(\overline{K})\varphi)}{\partial \overline{K}} \in \mathbb{R}^{{(mn)}\times {(3nm-m-n)}}$. Recall $\varphi_i = \left( \varphi_{i1},\varphi_{i2},\ldots,\varphi_{in} \right)^{\mathrm{T}}$. It can be verified that
\begin{align}
\nonumber
\overline{B} & = \left(
  \begin{array}{ccccc|cccc}
     B_{1}&&&&&\overline{B}_2&&&\\
    &B_{2}&&&&\overline{B}_1&\overline{B}_3&&\\
    &&\ddots&&&&\overline{B}_2&\overline{B}_4&\\
    &&&\ddots&&&&\ddots&\ddots\\
    &&&&B_{m}&&&&\overline{B}_{m-1}
  \end{array}
\right) \\
& = \left(
  \begin{array}{ccccc|cccc}
     &&&&&\overline{B}_2&&&\\
    &&&&&\overline{B}_1&\overline{B}_3&&\\
    &&B&&&&\overline{B}_2&\overline{B}_4&\\
    &&&&&&&\ddots&\ddots\\
    &&&&&&&&\overline{B}_{m-1}
  \end{array}
\right),
\label{sec3:def-B-bar}
\end{align}
with
\begin{align}
\overline{B}_i =
\left(
  \begin{array}{cccc}
     \varphi_{i1} & & &\\
    & \varphi_{i2} & &\\
    & & \ddots &\\
    & & & \varphi_{in}
  \end{array}
\right).
\end{align}
Note that the left part $B$ of $\overline{B}$ is nothing but the matrix in (\ref{sec3:def-B}).

\begin{lemma}
The eigenvalues of  $\overline{A}(\overline{K})+D$ are simple for $\overline{K}$ almost everywhere except on a subset of real codimension 1.
\end{lemma}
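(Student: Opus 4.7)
The plan is to adapt the proof of Lemma \ref{sec3:multiple-root} verbatim to the pentadiagonal case. Let $\bar{f}(\lambda) = \det(\overline{A}(\overline{K}) + D - \lambda I)$ be the characteristic polynomial of $\overline{A}(\overline{K}) + D$, and let $\bar{R}(\overline{K})$ be its discriminant. By standard elimination theory, $\bar{f}(\lambda)$ has a multiple root if and only if $\bar{R}(\overline{K}) = 0$, and $\bar{R}(\overline{K})$ is a polynomial in the entries of $\overline{K}$. Since a nonzero polynomial on $\mathbb{R}^{3nm-m-n}$ vanishes on a real algebraic set of codimension at least one, it suffices to establish that $\bar{R}$ is not identically zero.

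To see this, I would exhibit one concrete $\overline{K}^{*}$ at which $\overline{A}(\overline{K}^{*}) + D$ has $mn$ pairwise distinct eigenvalues. The crucial observation is that the sparsity pattern of $\overline{A}(\overline{K})$ matches that of $D$ exactly: the diagonal blocks of both are tridiagonal and the off-diagonal blocks of both are diagonal. Hence one can choose $a_{j,j+1}^{(i)} = \tfrac{1}{2h_2^2}$ and $b_{j}^{(i)} = \tfrac{1}{2h_1^2}$ so that all off-diagonal entries of $\overline{A}(\overline{K}^{*}) + D$ cancel, leaving a diagonal matrix whose $(i,j)$-th entry is $a_{jj}^{(i)} + \tfrac{1}{h_1^2} + \tfrac{1}{h_2^2} + v_{ij}$. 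Now simply pick the $mn$ free parameters $a_{jj}^{(i)}$ so that these $mn$ diagonal entries are pairwise distinct. For such $\overline{K}^{*}$ the eigenvalues are obviously simple, so $\bar{R}(\overline{K}^{*}) \neq 0$ and hence $\bar{R}$ is not the zero polynomial.

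The only potential obstacle is verifying that the sparsity patterns really do allow the off-diagonal cancellation above; this is essentially a bookkeeping check against the definitions of $\overline{A}(\overline{K})$ and $D$ given earlier, and involves no nontrivial computation. Setting $\overline{U}_1 = \{\overline{K} : \bar{R}(\overline{K}) = 0\}$ completes the proof in the same form as Lemma \ref{sec3:multiple-root}.
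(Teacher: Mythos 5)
Your proposal is correct and follows the same route as the paper, which simply invokes the discriminant argument of Lemma \ref{sec3:multiple-root} ("similar to the proof of..."). The only difference is that you explicitly exhibit a parameter choice (cancelling the off-diagonal entries of $D$ and making the diagonal entries distinct) to show $\bar{R}\not\equiv 0$, a point the paper treats as obvious; your bookkeeping of the sparsity patterns is accurate, so this is a valid and slightly more detailed version of the paper's argument.
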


\begin{proof}
Similar to the proof of Lemma \ref{sec3:multiple-root}.
\end{proof}

For our discussion, $\overline{U}_2 =(\mathbb{R}^{+})^{(3nm-m-n)}$ is removed. To prove that $\forall (\varphi,\lambda,t,\overline{K})\in \mathbb{R}^{mn}\times R \times [0,1)\times  \mathbb{R}^{(3nm-m-n)}\setminus (\overline{U}_1\cup \overline{U}_2)$ satisfying $ \tilde{H}(\varphi,\lambda,t,\overline{K}) = 0$, the Jacobian matrix of $\tilde{H}(\varphi,\lambda,t,\overline{K})$ in (\ref{sec3:jacobi-matrix-for-pentadiagonal}) is row full rank, it suffices to prove that the following submatrix of the Jacobian matrix in (\ref{sec3:jacobi-matrix-for-pentadiagonal})
\begin{equation}
\label{sec3:subjacobian-for-pentadiagonal}
\setlength{\arraycolsep}{8pt}
\renewcommand{\arraystretch}{0.45}
\left(
\begin{array}{cccc}
(1-t)\overline{A}(\overline{K})+D+3t\beta \mbox{diag}(\varphi^2)-\lambda I &-\varphi& \beta\varphi^3-\overline{A}(\overline{K})\varphi&(1-t)B\\
                                              -\varphi^\mathrm{T} &0&0&0
\end{array}
\right)
\end{equation}
is row full rank. With the notations $R^0$ and $R^*$ as in (\ref{sec3:notation-R0-R*}) and similar arguments as in Subsection \ref{sec3:subsubset-tridiagonal-case}, it can be proved that $((1-t)\overline{A}+D-\lambda I))(R^0,:)$ is row full rank. Therefore the matrix (\ref{sec3:subjacobian-for-pentadiagonal}) is row full rank. That is $0$ is a regular value of the homotopy $H$ with random pentadiagonal matrix for almost all $\overline{K} \in \mathbb{R}^{(3nm-m-n)}\setminus (U_1\cup U_2)$ with $m \ge n \ge 6$.

\section{Algorithm and numerical results}

\subsection{Algorithm}
Thanks to Theorems \ref{sec 3: thm-regular one dimen} and \ref{sec3:thm-0-is-regular-value}, since 0 is a regular value of the homotopies constructed, the homotopy paths determined by the homotopy equations (\ref{sec 3: homotopy one dimen}) and (\ref{sec3:eqn-homotopy-3diagonal}) have no bifurcation points with probability one. Therefore the usual path following algorithm, i.e., the predictor-corrector method as in \cite{Allgower,Huang-Zeng-Ma}, can be adapted to trace the homotopy paths of the homotopy equations (\ref{sec 3: homotopy one dimen}) and (\ref{sec3:eqn-homotopy-3diagonal}). The adapted algorithm is stated in Algorithm \ref{algo1:predictor-corrector-homotopy-method}. For notation convenience, in Algorithm \ref{algo1:predictor-corrector-homotopy-method}, we denote the iterate at step $k$ as $x_k = (\varphi^{(k)},\lambda_k)$, $k=1,\ldots$.

\begin{algorithm}
\caption{Predictor-Corrector method}\label{algo1:predictor-corrector-homotopy-method}
\textbf{Initialization:} Set $(x_0,t_0)=(x_0,0)$, $k = 0$, the minimum step size $ds_{min}$, the initial step size $ds$. Compute the tangent vector $(\dot{x}_0,\dot{t}_0)$ such that $\dot{t}_0 > 0$ and record the orientation $ori$
\begin{equation}
{H_x \dot{x}_0 + H_t \dot{t}_0 = 0},\qquad
ori = \mbox{sign}
\left(
\left|
\begin{array}{cc}
H_x & H_t\\
\dot{x}_0 & \dot{t}_0
\end{array}
\right|
\right).
\end{equation}
\While{$t_k<1$}{

\textbf{Predictor:} $( \bar{x}_{k+1}, \bar{t}_{k+1}) = (x_k,t_k) + ds (\bar{x}_{k},\bar{t}_{k})$\;
\If {$\bar{t}_{k+1} > 1$} {change ds such that $\bar{t}_{k+1} = 1$\;}

\textbf{Corrector:}
\uIf {$\bar{t}_{k+1} = 1$} {$(v,\tau) = (0,1)$\;}
\Else {$(v,\tau) = (\dot{x}_{k},\dot{t}_{k})$\;}
Employ Newton method to solve the following nonlinear equations:
\begin{equation}
\begin{array}{ccc}
\left(\begin{array}{c}
H(x,t)\\
v^T(x-\bar{x}_{k+1})+\tau(t-\bar{t}_{k+1})\end{array}\right) & = & 0.
\end{array}
\end{equation}

\textbf{Judgement:}
\uIf {the above iteration converges to $(x_{k+1},t_{k+1})$}{compute the tangent vector $(\dot{x}_{k+1},\dot{t}_{k+1})$ satisfying
\begin{equation}
{H_x \dot{x}_{k+1} + H_t \dot{t}_{k+1} = 0}, \qquad
\mbox{sign}
\left(
\left|
\begin{array}{cc}
H_x & H_t\\
\dot{x}_{k+1} & \dot{t}_{k+1}
\end{array}
\right|
\right) = ori
\end{equation}}
\Else {$ds = \frac{1}{2}ds $\; go to Predictor\;}
Compute angle $\theta$ of $(\bar{x}_{k},\bar{t}_{k})$ and $(\bar{x}_{k+1},\bar{t}_{k+1})$\;
\If {$\theta > 18^0$} {$ds = \frac{1}{2}ds,$\: go to Predictor\;}

\textbf{Accept iterates:} $(x_{k},t_{k}) =(x_{k+1},t_{k+1})$,$(\dot{x}_{k},\dot{t}_{k}) = (\dot{x}_{k+1},\dot{t}_{k+1})$\;

\If {$\theta < 6^0$}{$ds = 2ds$\;}
\If {$ds < ds_{min}$} {stop the algorithm\;}}

\end{algorithm}

\subsection{Numerical results}
The first numerical example is the 1D discretized problem (\ref{sec 2: finite difference - one dimen}) with $\beta = 20$, $V(x)=\frac{1}{2}x^2$, $\Omega=[-2,2]$ and $n=999$. Some eigenvectors, i.e., approximate eigenfunctions are plotted in Figures \ref{sec4:fig-1D-1} to \ref{sec4:fig-1D-9}. The eigenvector in Figure \ref{sec4:fig-1D-1} is the unique positive solution of (\ref{sec 2: finite difference - one dimen}) as stated in Theorem \ref{sec 2: thm-positive-eigenvector}, which corresponds to the unique positive ground state, as proved in \cite{B} for the continuous nonlinear eigenvalue problem (\ref{sec 1: transformed nonlinear problem})-(\ref{sec 1:transformed constraint}) when $\beta>0$. The approximate eigenfunction in Figure \ref{sec4:fig-1D-2} is antisymmetric as described in Theorem \ref{sec 2: certain solution} and is an approximate first excited state. Others are approximate excited states corresponding to higher energy. The order preserving property of the eigenvalue curves as stated in \cite{TYA} was observed, that is, if $\lambda(0)$ is the $k$th smallest eigenvalue of the initial problem, then $\lambda(t)$ is the $k$th smallest eigenvalue of the intermediate problem for each $t \in [0,1)$. However, we are not able to prove such property for the eigenvector-dependent nonlinear eigen-problem yet.

\begin{figure}[H]
     \centering
     \subfigure
     {
         \begin{minipage}[b]{.3\linewidth}
          \centering
          \includegraphics[scale = 0.18]{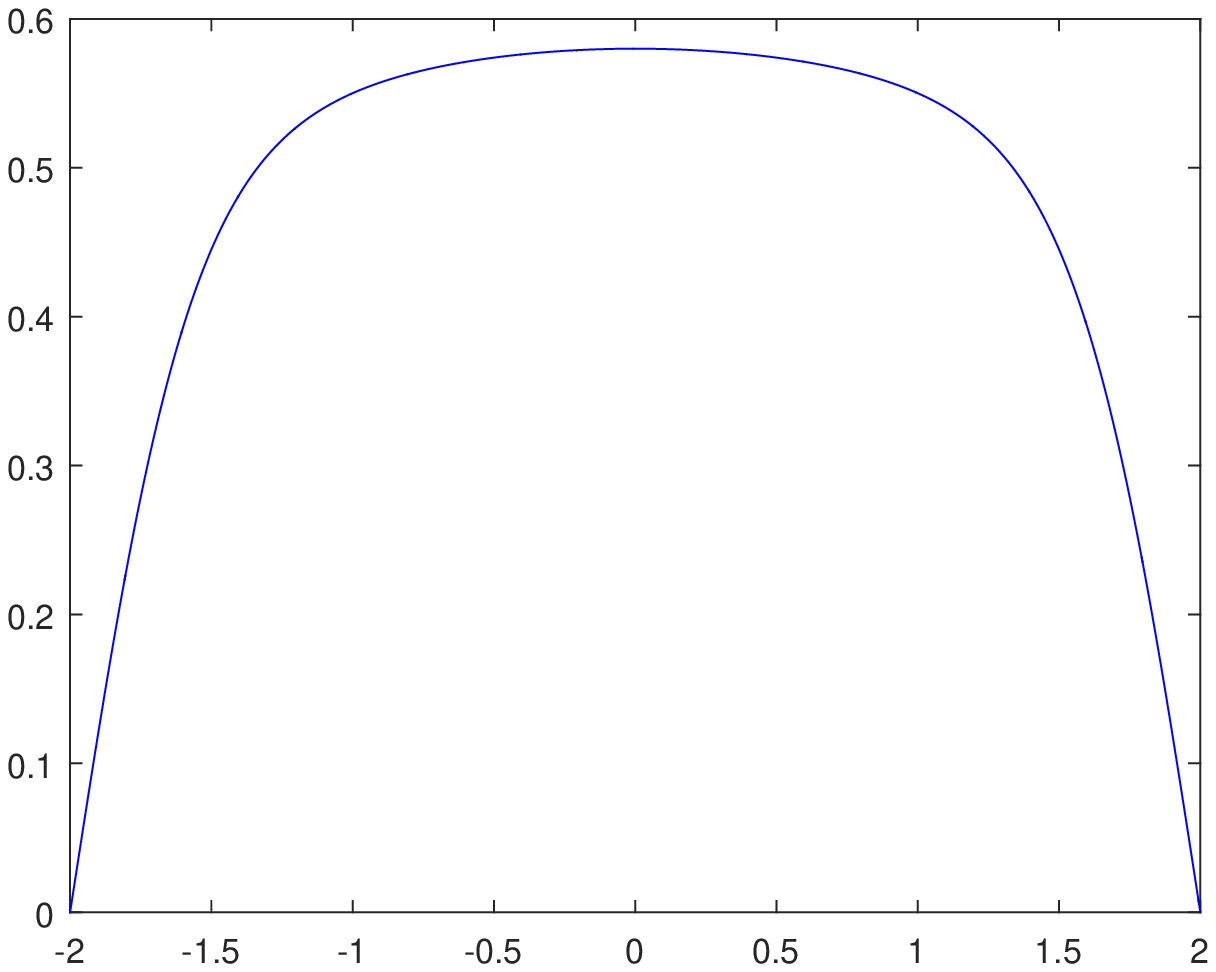}
          \caption{$\lambda = 6.76$}\label{sec4:fig-1D-1}
           \end{minipage}
      }
      \subfigure
      {
          \begin{minipage}[b]{.3\linewidth}
          \centering
          \includegraphics[scale = 0.18]{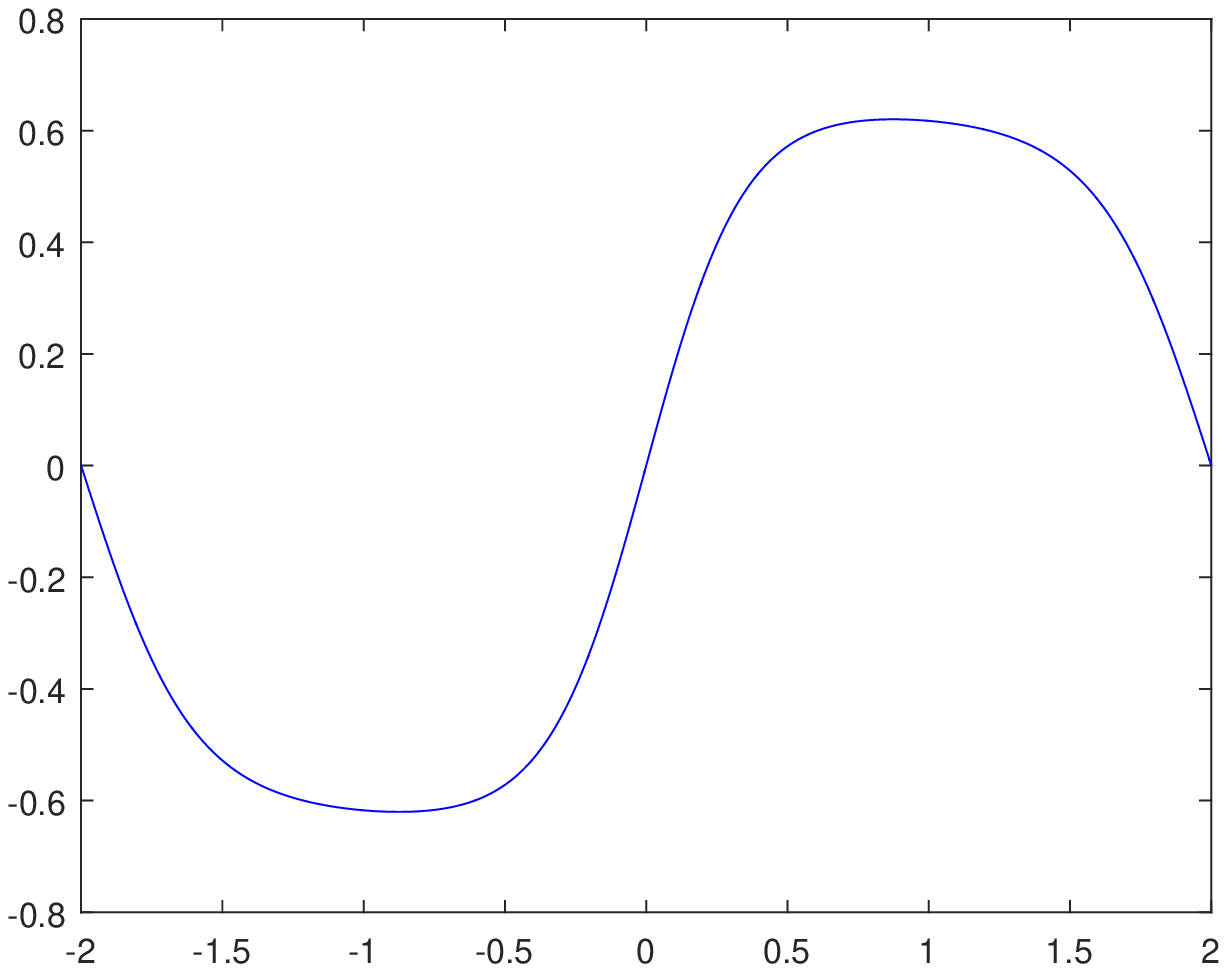}
          \caption{$\lambda = 8.39$}\label{sec4:fig-1D-2}
          \end{minipage}
       }
       \subfigure
      {
          \begin{minipage}[b]{.3\linewidth}
          \centering
          \includegraphics[scale = 0.18]{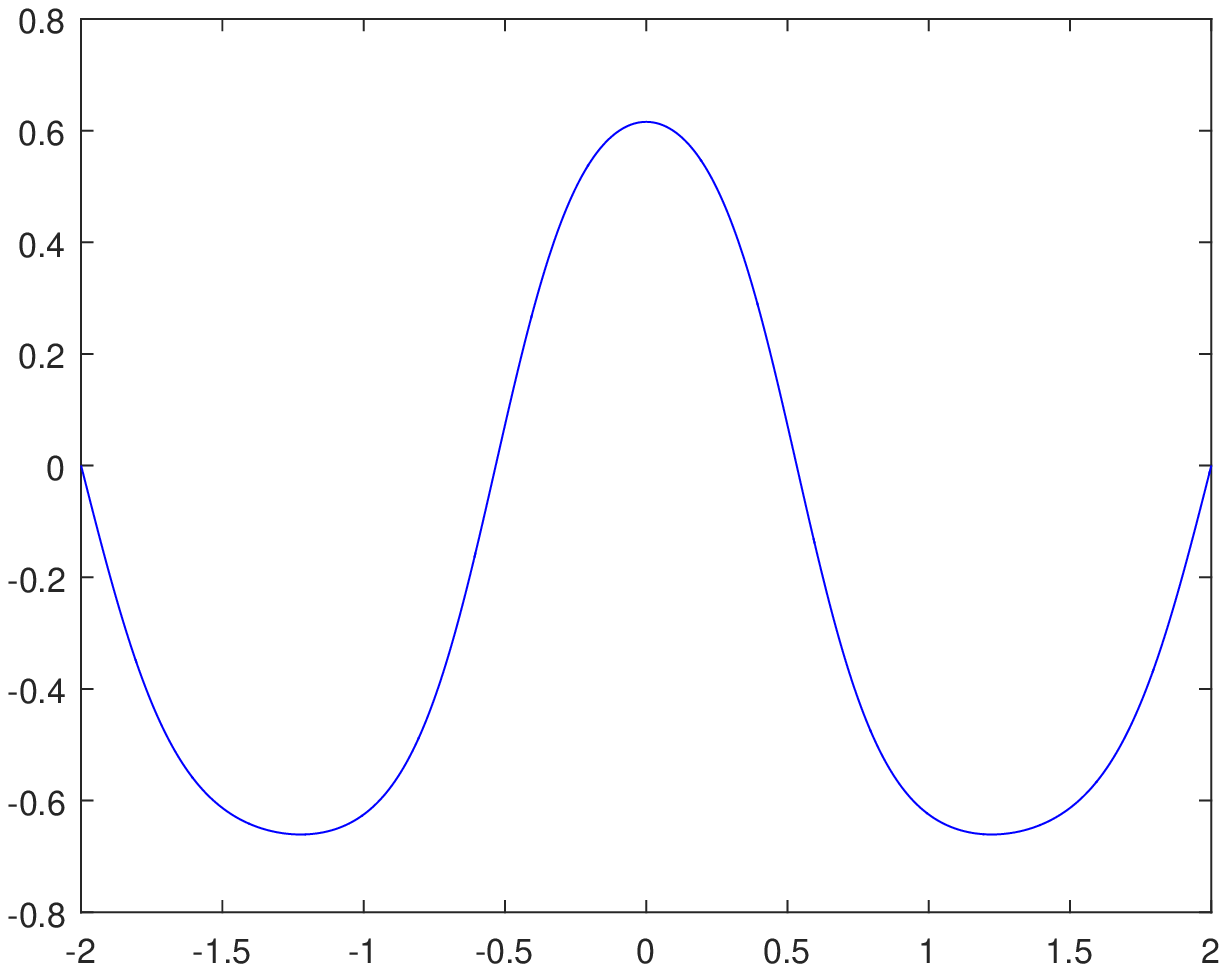}
          \caption{$\lambda = 10.35$}
          \end{minipage}
       }
\end{figure}
\vspace{-1cm}
\begin{figure}[htb]
     \centering
     \subfigure
     {
        \begin{minipage}[b]{.3\linewidth}
        \centering
        \includegraphics[scale = 0.18]{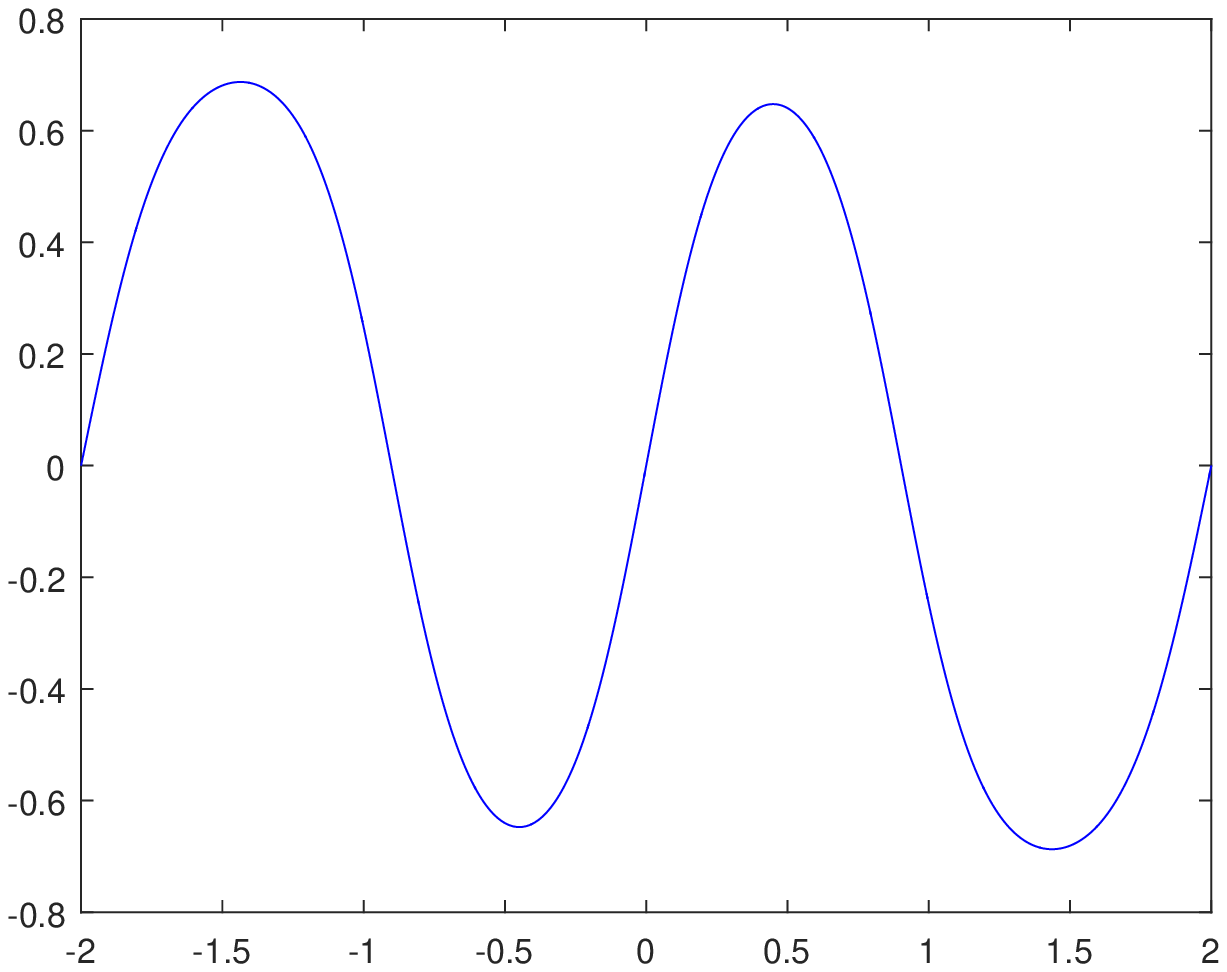}
         \caption{$\lambda = 12.73$}
         \end{minipage}
      }
      \subfigure
      {
         \begin{minipage}[b]{.3\linewidth}
          \centering
          \includegraphics[scale = 0.18]{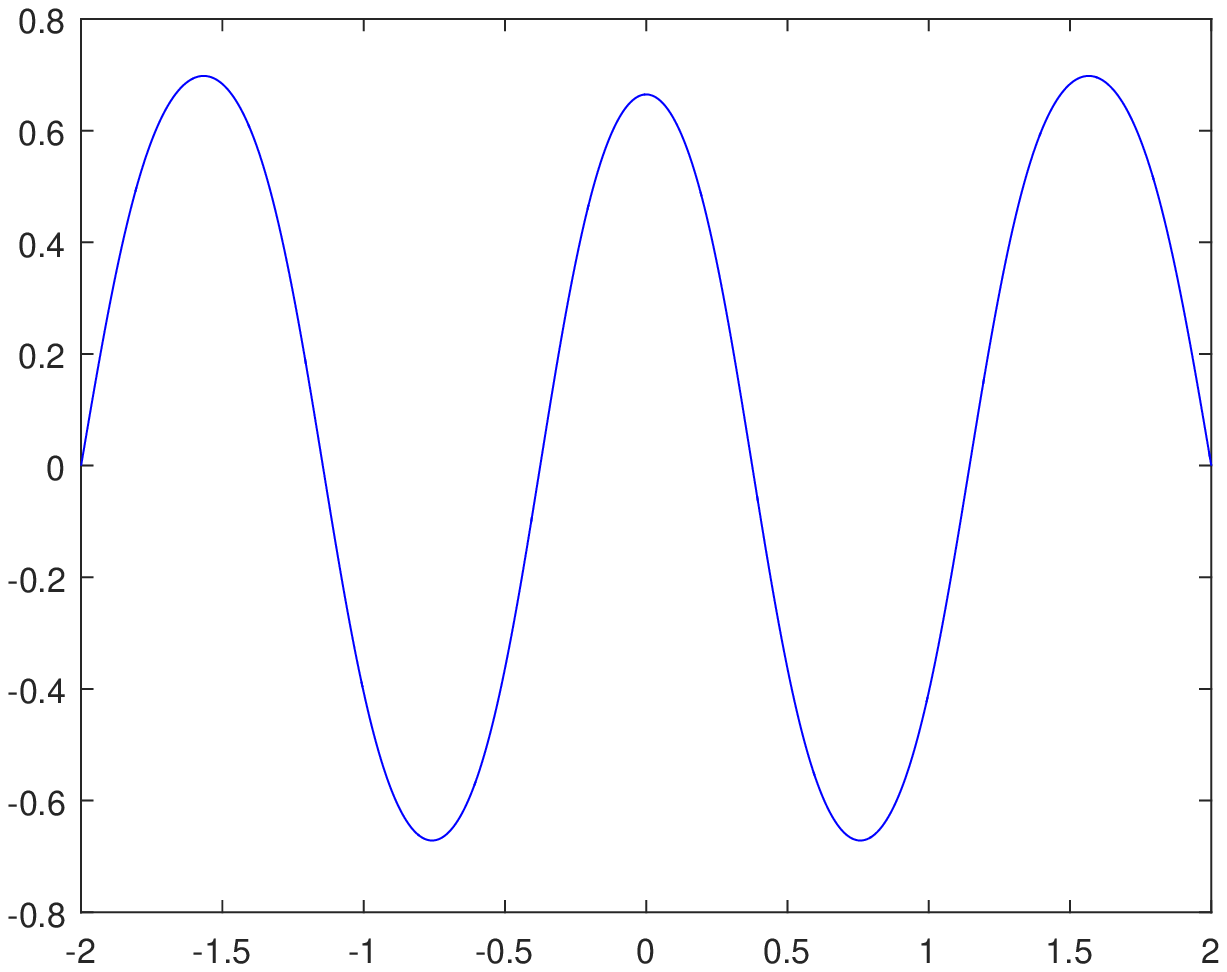}
          \caption{$\lambda = 15.62$}
          \end{minipage}
      }
      \subfigure
     {
         \begin{minipage}[b]{.3\linewidth}
         \centering
         \includegraphics[scale = 0.18]{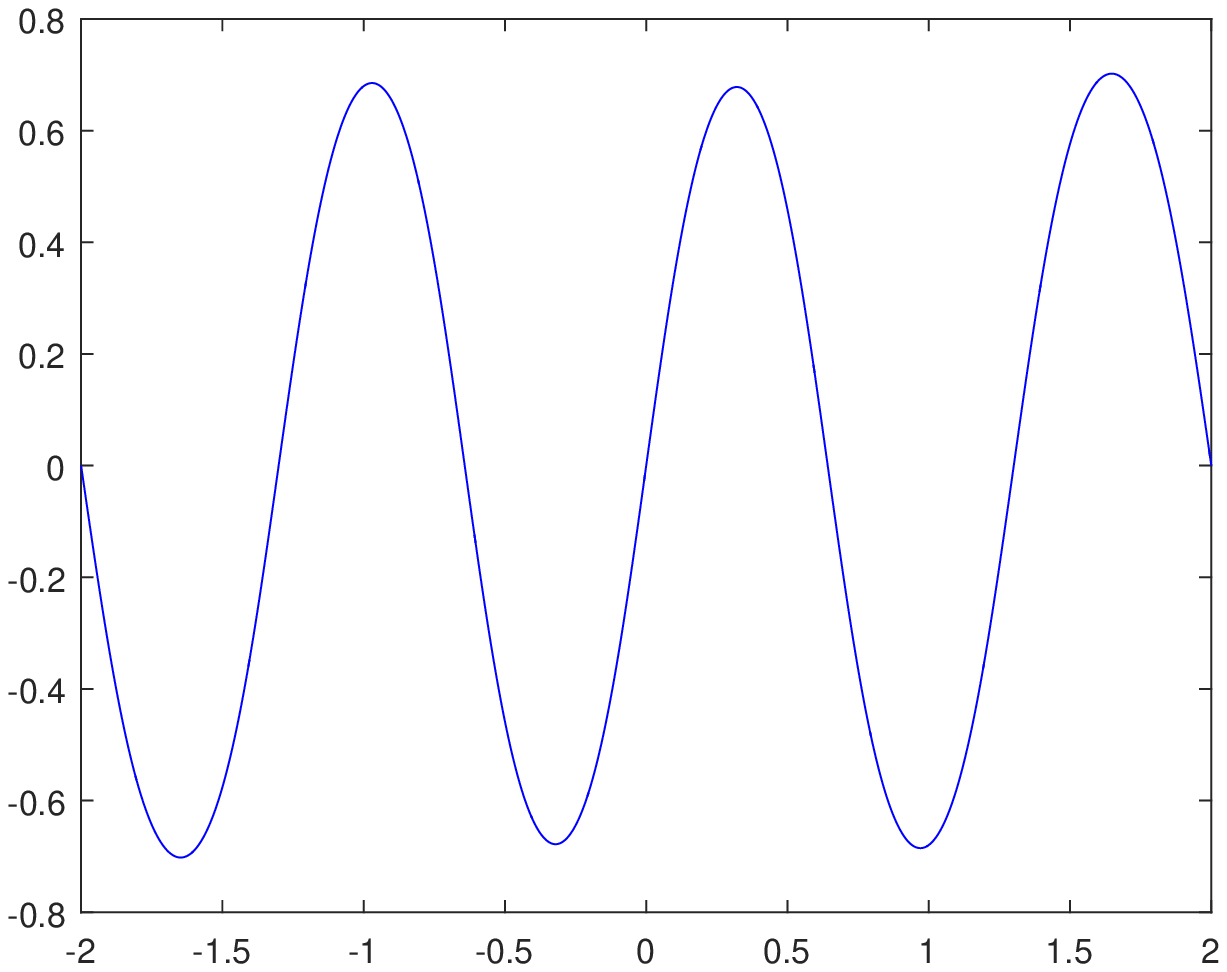}
         \caption{$\lambda = 19.08$}
         \end{minipage}
      }
\end{figure}
\vspace{-1cm}
\begin{figure}[htb]
     \centering
     \subfigure
     {
         \begin{minipage}[b]{.3\linewidth}
         \centering
         \includegraphics[scale = 0.18]{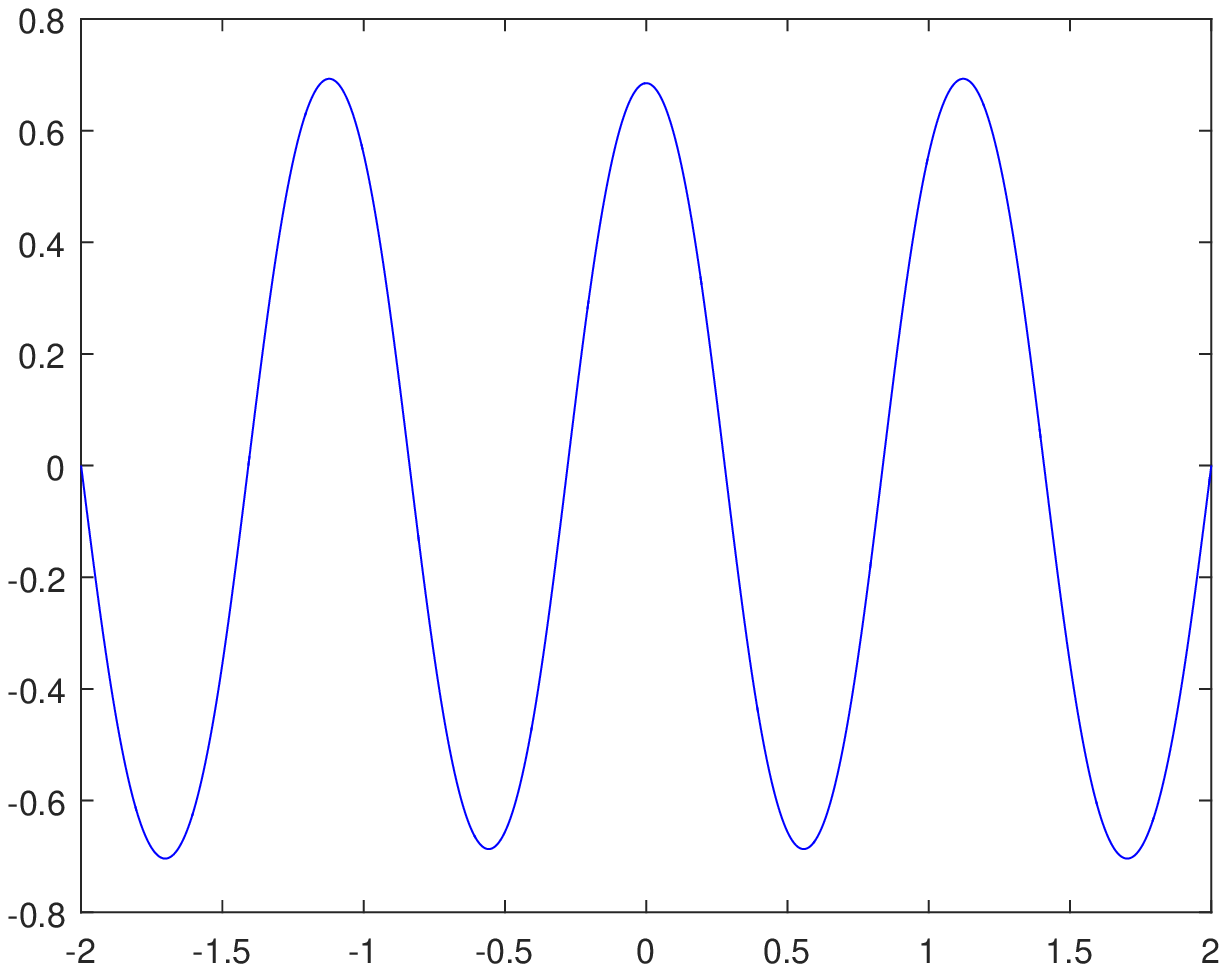}
          \caption{$\lambda = 23.13$}
          \end{minipage}
     }
    \subfigure
    {
        \begin{minipage}[b]{.3\linewidth}
         \centering
         \includegraphics[scale = 0.18]{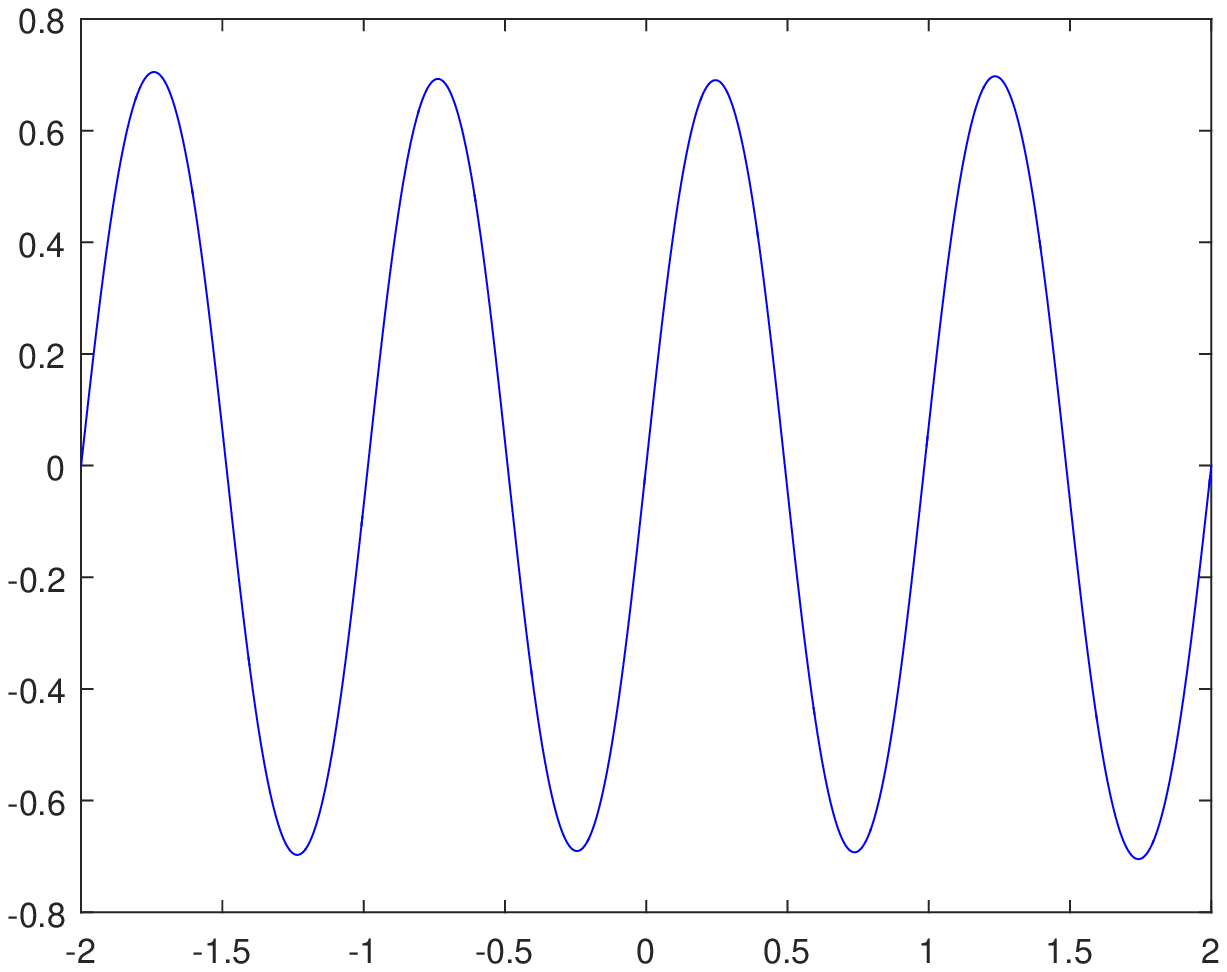}
         \caption{$\lambda = 27.79$}
         \end{minipage}
     }
    \subfigure
    {
       \begin{minipage}[b]{.3\linewidth}
       \centering
       \includegraphics[scale = 0.18]{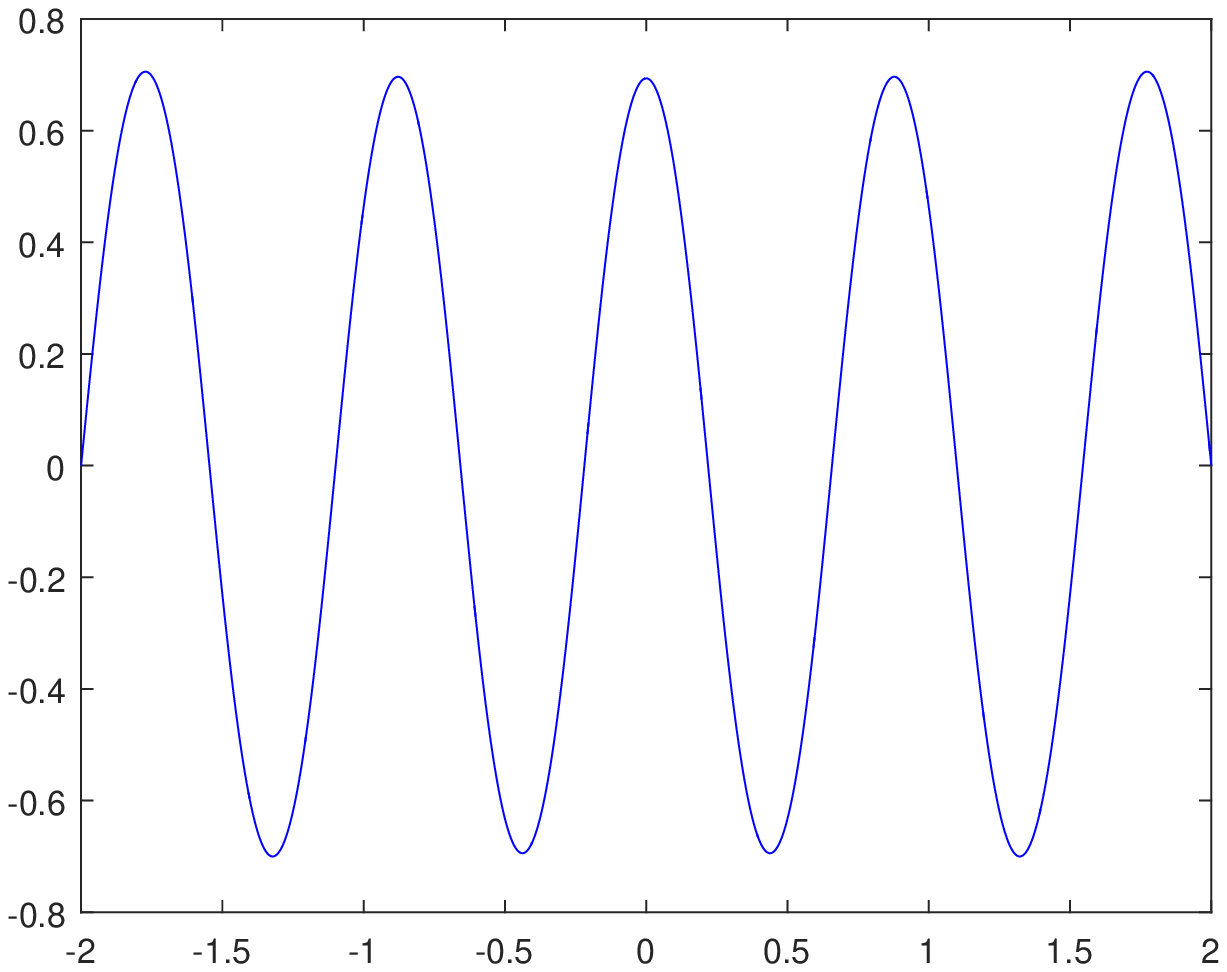}
        \caption{$\lambda = 33.05$}\label{sec4:fig-1D-9}
        \end{minipage}
     }
\end{figure}

The second numerical example is the 2D discretized problem $(\ref{sec 2: finite difference - two dimen})$ with $\beta = 20$, $V(x)=\frac{1}{2}(x_1^2 + x_2^2)$, $\Omega=[0,1]\times [0,1]$ and $m = n = 29$. Some approximate eigenfunctions are collected in Figures \ref{sec4:fig-2D-1} to \ref{sec4:fig-2D-12}. The approximate eigenfunction in Figure \ref{sec4:fig-2D-1} corresponds to the unique positive ground state. Others are approximate excited states corresponding to higher energy. The order preserving property of the eigenvalue curves is also observed for the 2D case.
\begin{figure}[htb]
     \centering
     \subfigure
     {
          \begin{minipage}[b]{.3\linewidth}
          \centering
          \includegraphics[scale = 0.25]{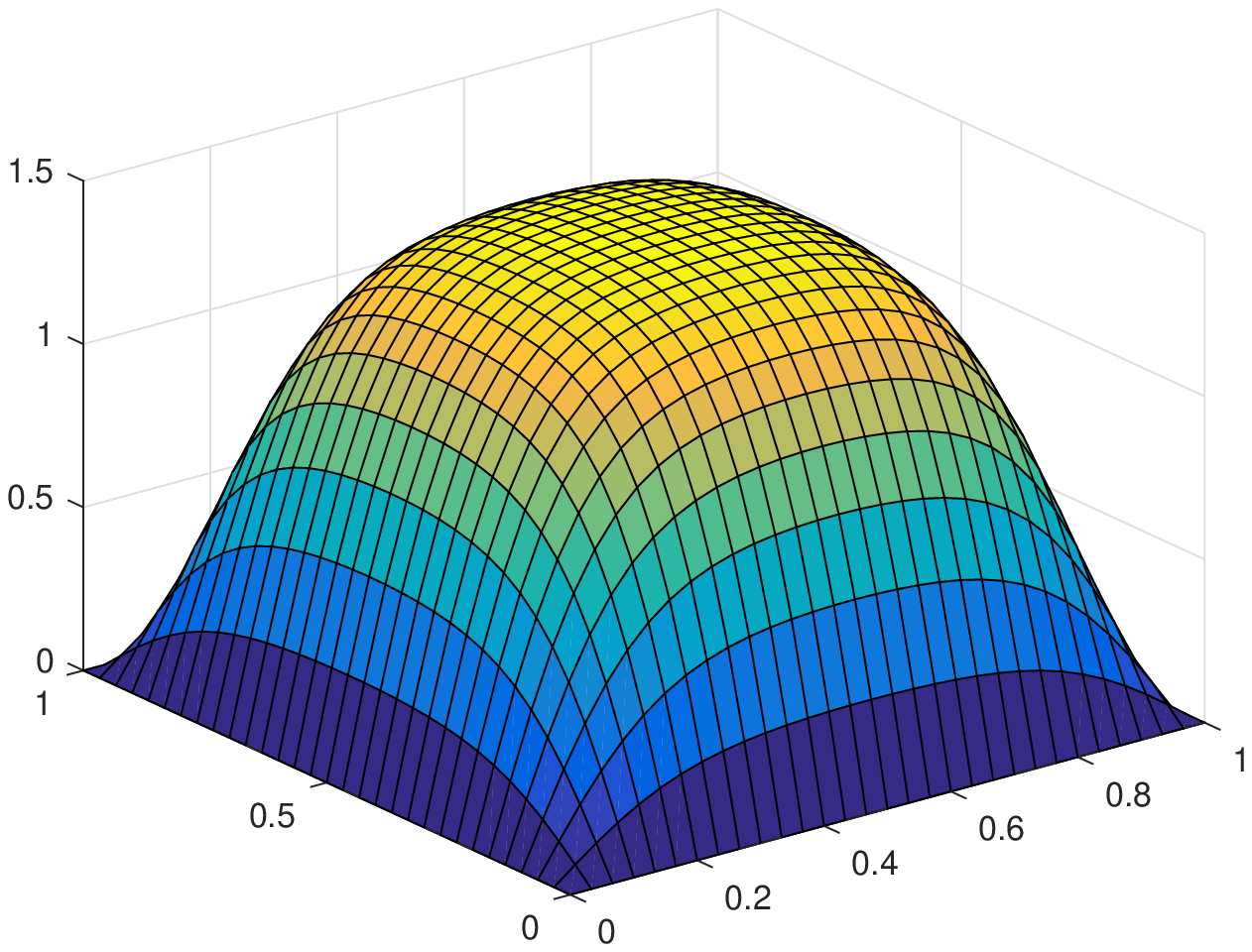}
          \caption{$\lambda  = 43.36$}\label{sec4:fig-2D-1}
          \end{minipage}
     }
     \subfigure
    {
        \begin{minipage}[b]{.3\linewidth}
        \centering
        \includegraphics[scale = 0.25]{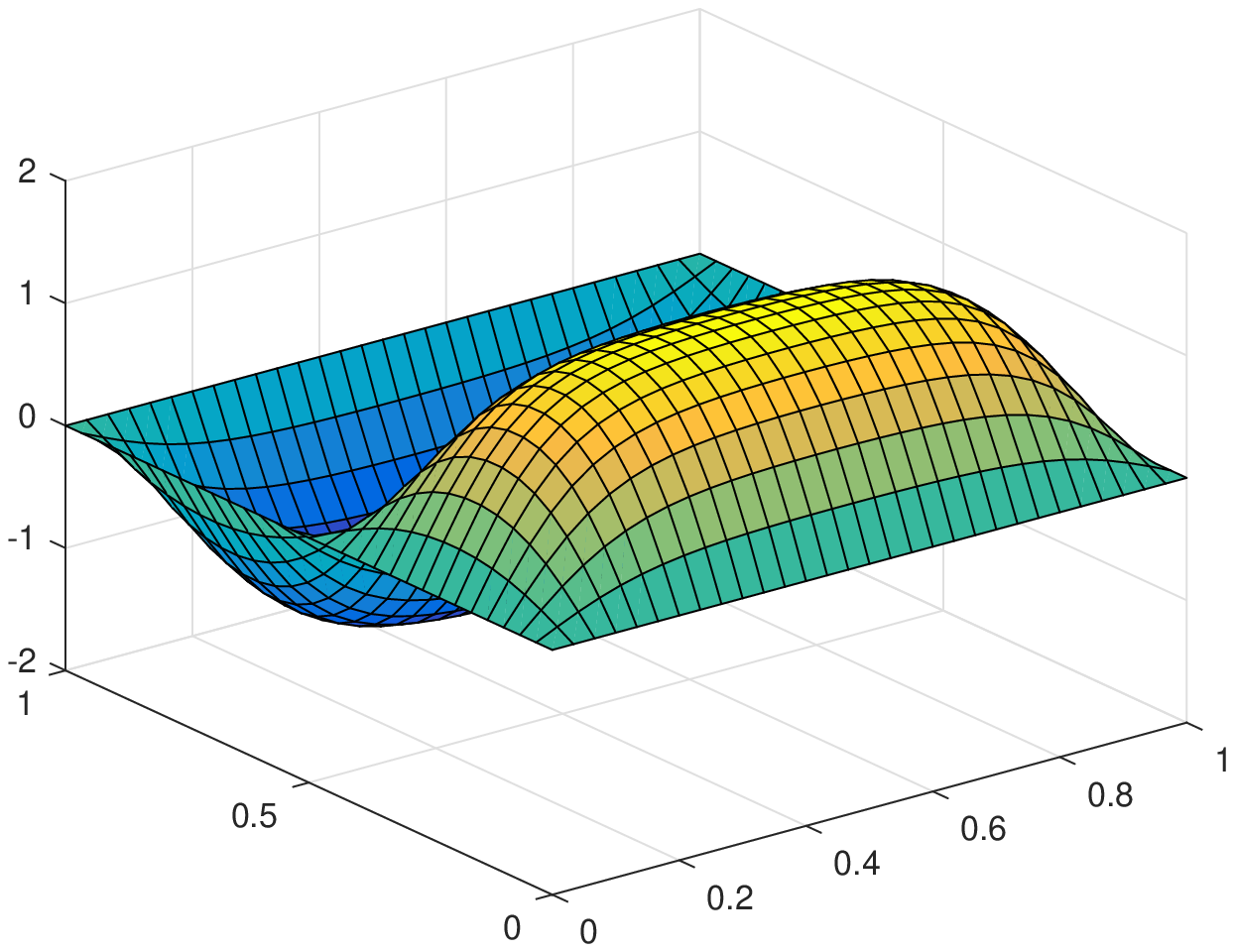}
        \caption{$\lambda  = 60.89$}
        \end{minipage}
    }
    \subfigure
   {
       \begin{minipage}[b]{.3\linewidth}
       \centering
       \includegraphics[scale = 0.25]{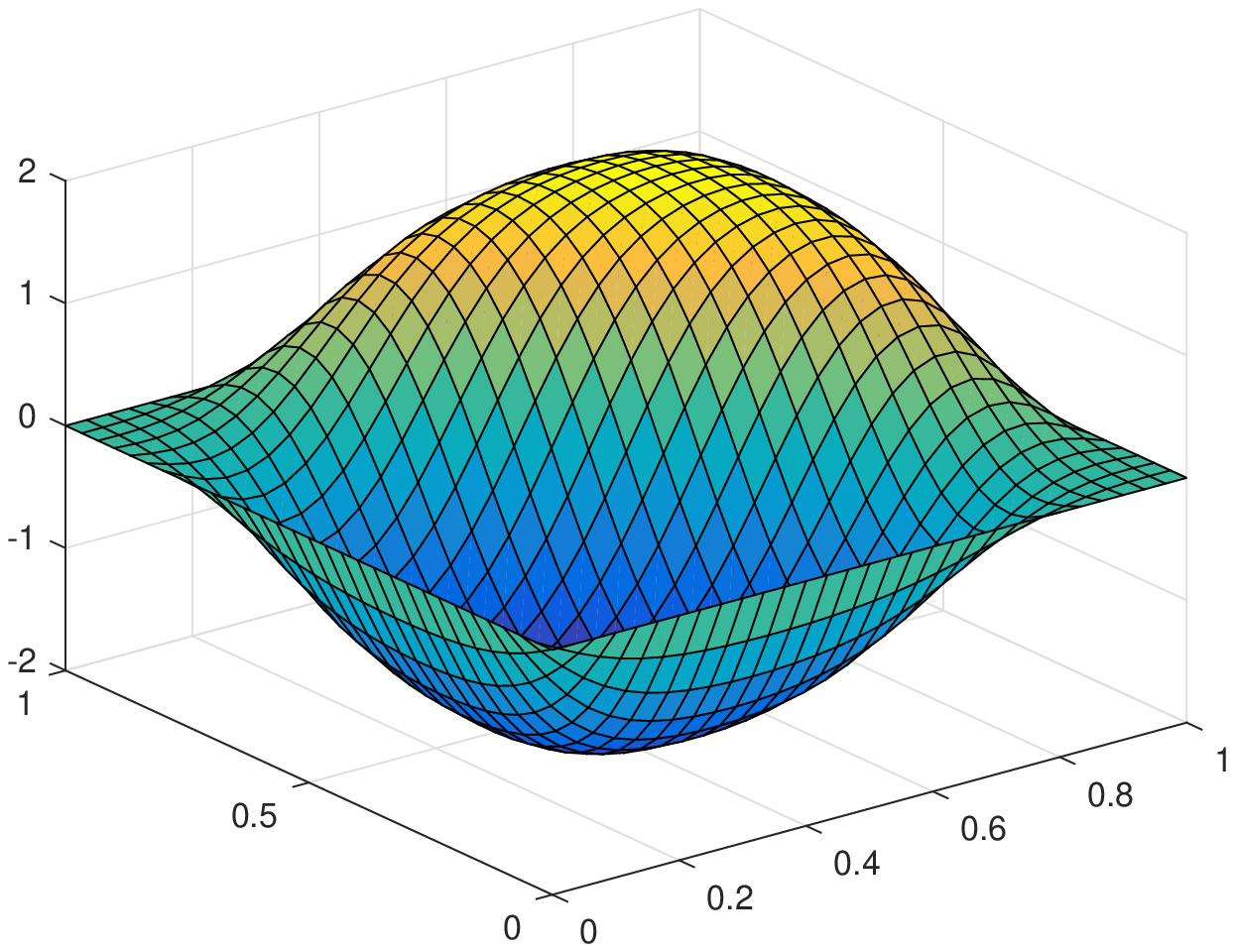}
        \caption{$\lambda = 65.31$}
        \end{minipage}
   }
\end{figure}
\vspace{1cm}
\begin{figure}[htb]
     \centering
     \subfigure
     {
         \begin{minipage}[b]{.3\linewidth}
         \centering
         \includegraphics[scale = 0.25]{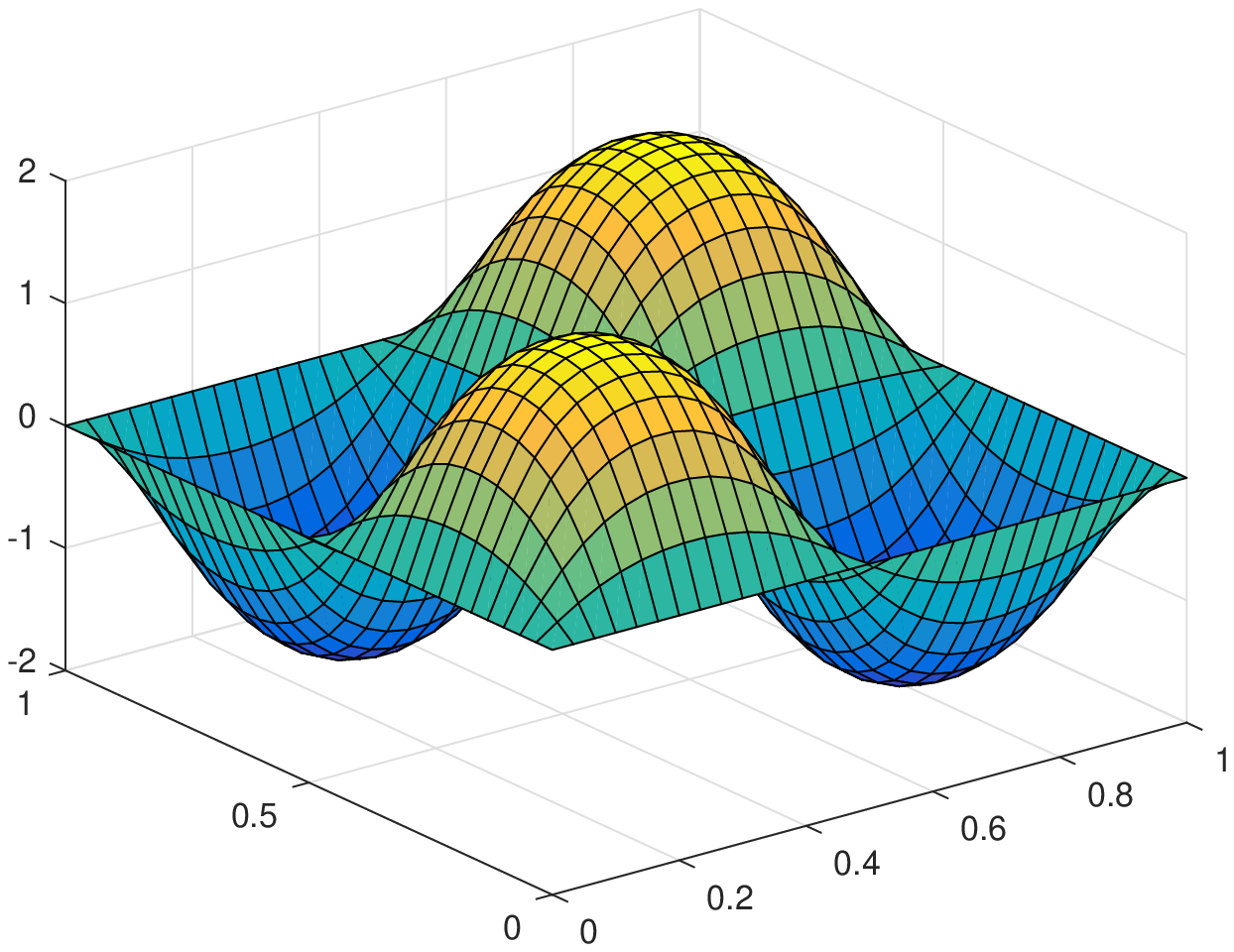}
         \caption{$\lambda = 78.81$}
        \end{minipage}
     }
    \subfigure
    {
        \begin{minipage}[b]{.3\linewidth}
        \centering
        \includegraphics[scale = 0.25]{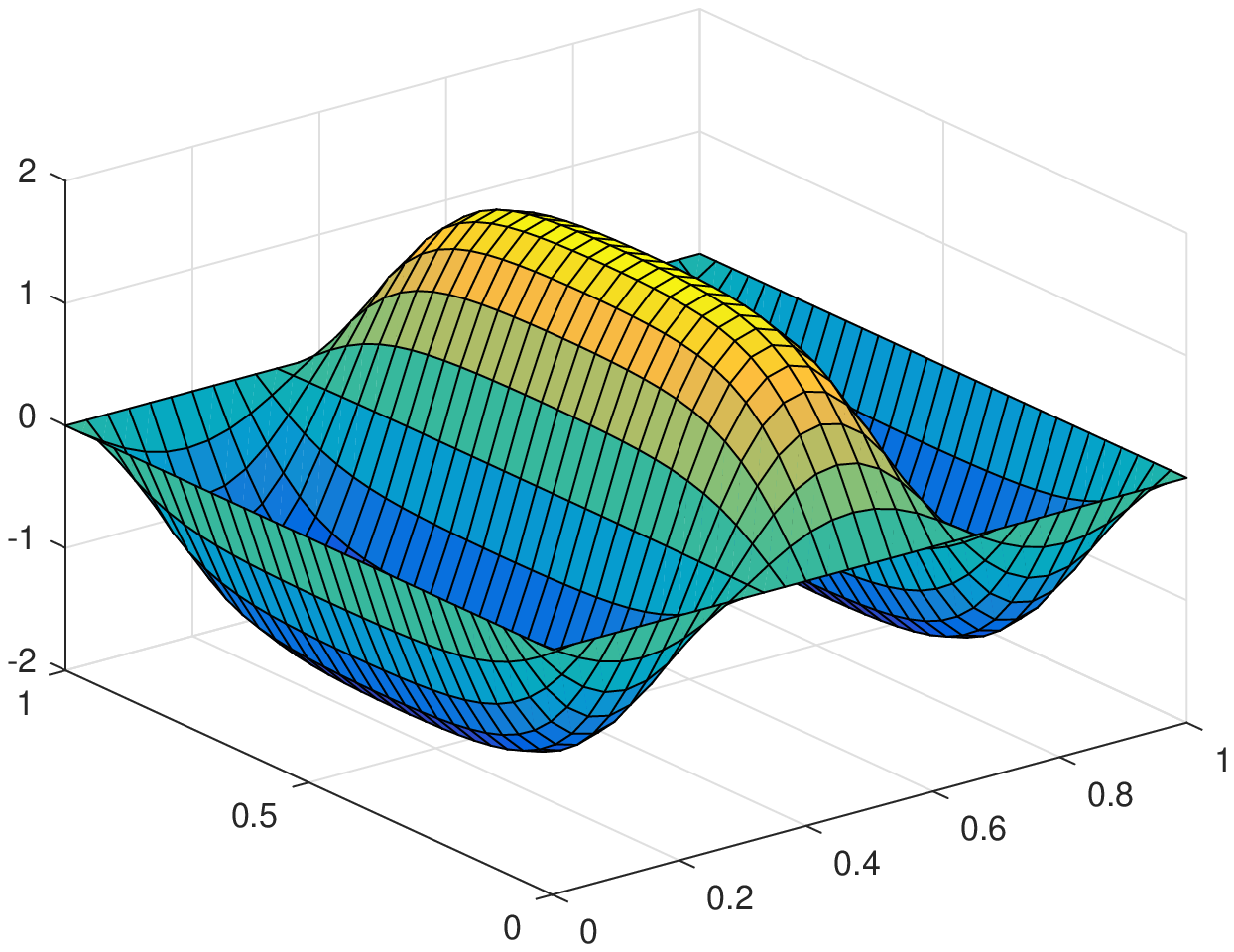}
        \caption{$\lambda = 86.28$}
        \end{minipage}
    }
    \subfigure
   {
        \begin{minipage}[b]{.3\linewidth}
        \centering
        \includegraphics[scale = 0.25]{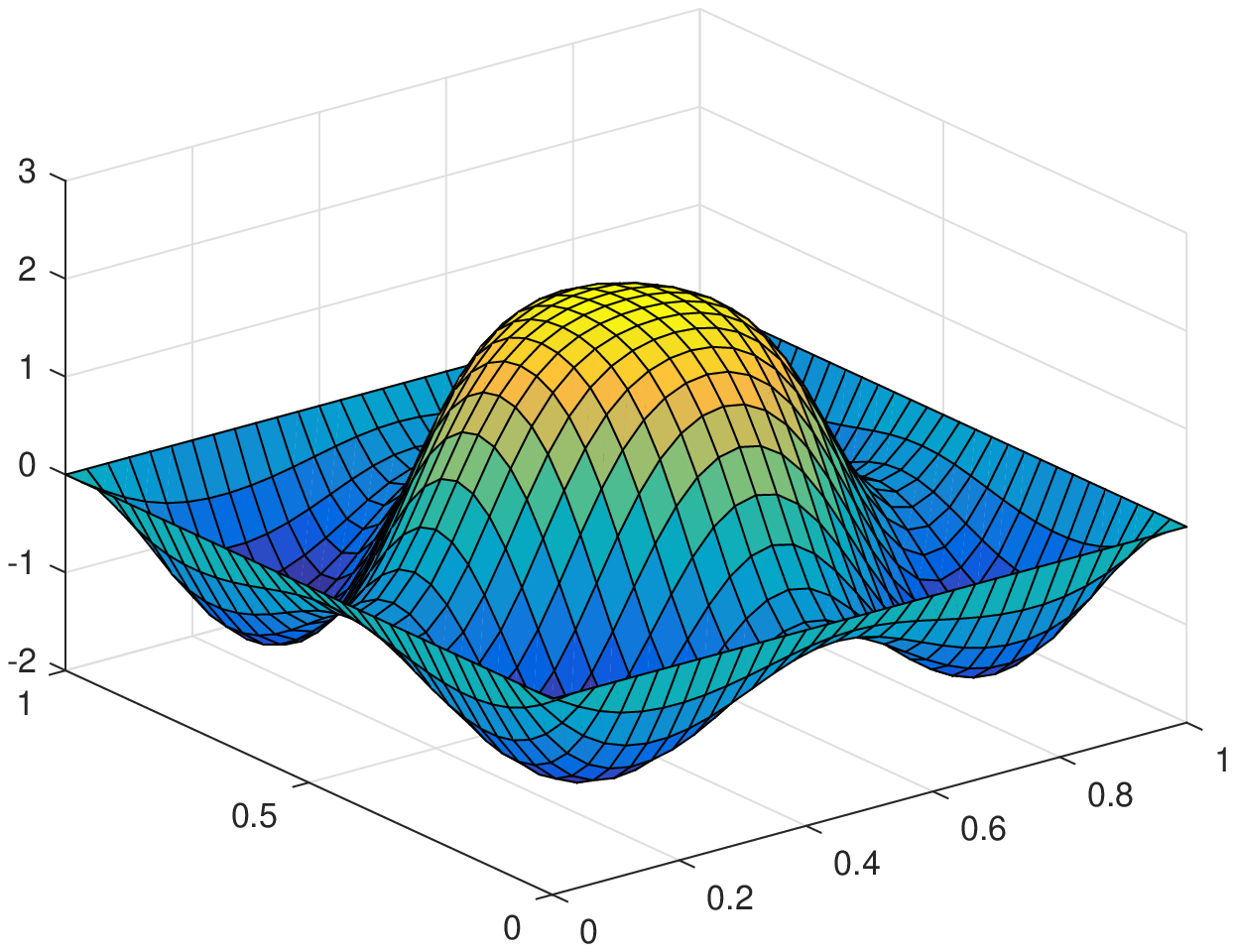}
        \caption{$\lambda = 94.06$}
        \end{minipage}
    }
\end{figure}
\vspace{-1cm}
\begin{figure}[htb]
     \centering
     \subfigure
     {
         \begin{minipage}[b]{.3\linewidth}
         \centering
         \includegraphics[scale = 0.25]{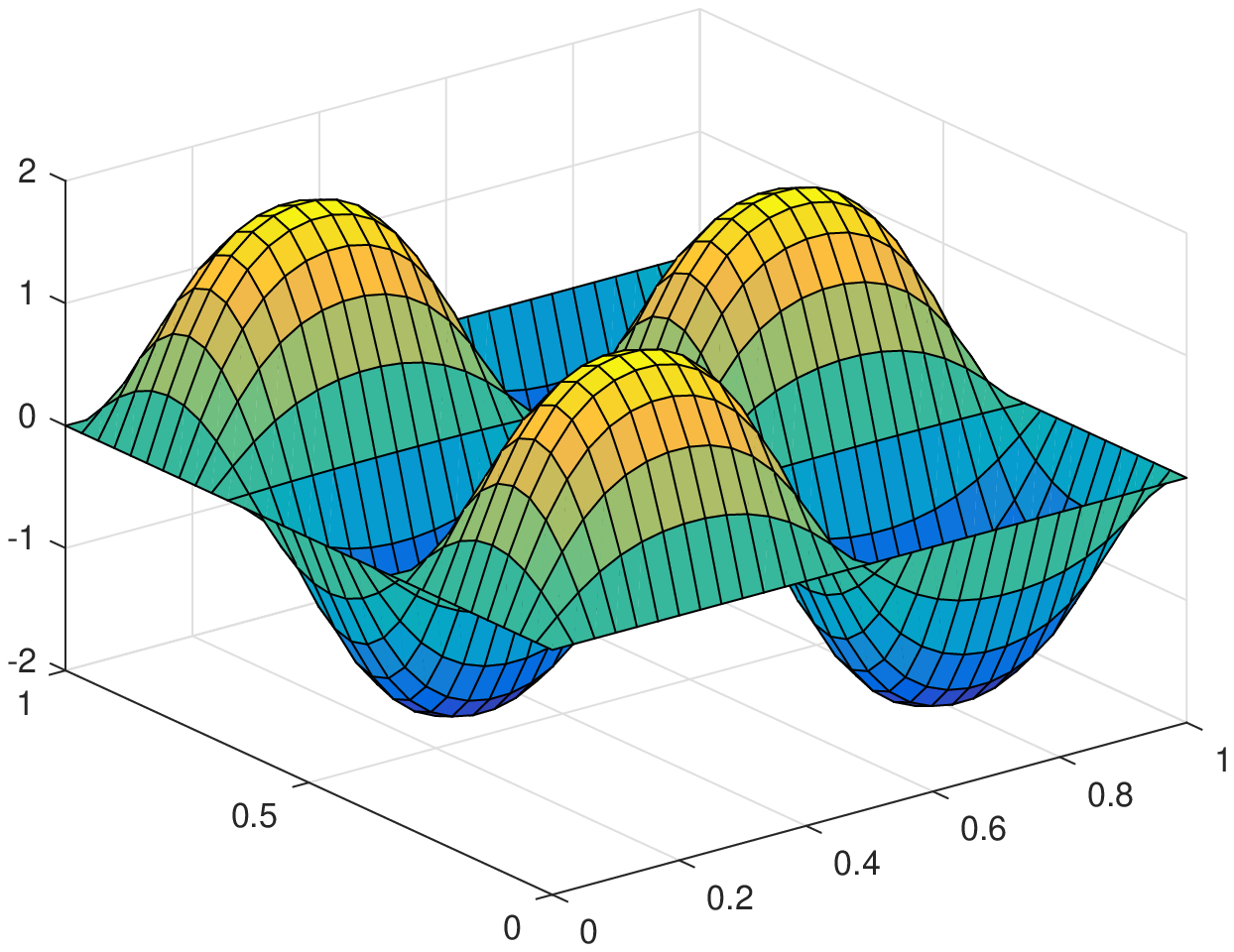}
         \caption{$\lambda = 104.4$}
         \end{minipage}
     }
    \subfigure
   {
        \begin{minipage}[b]{.3\linewidth}
        \centering
        \includegraphics[scale = 0.25]{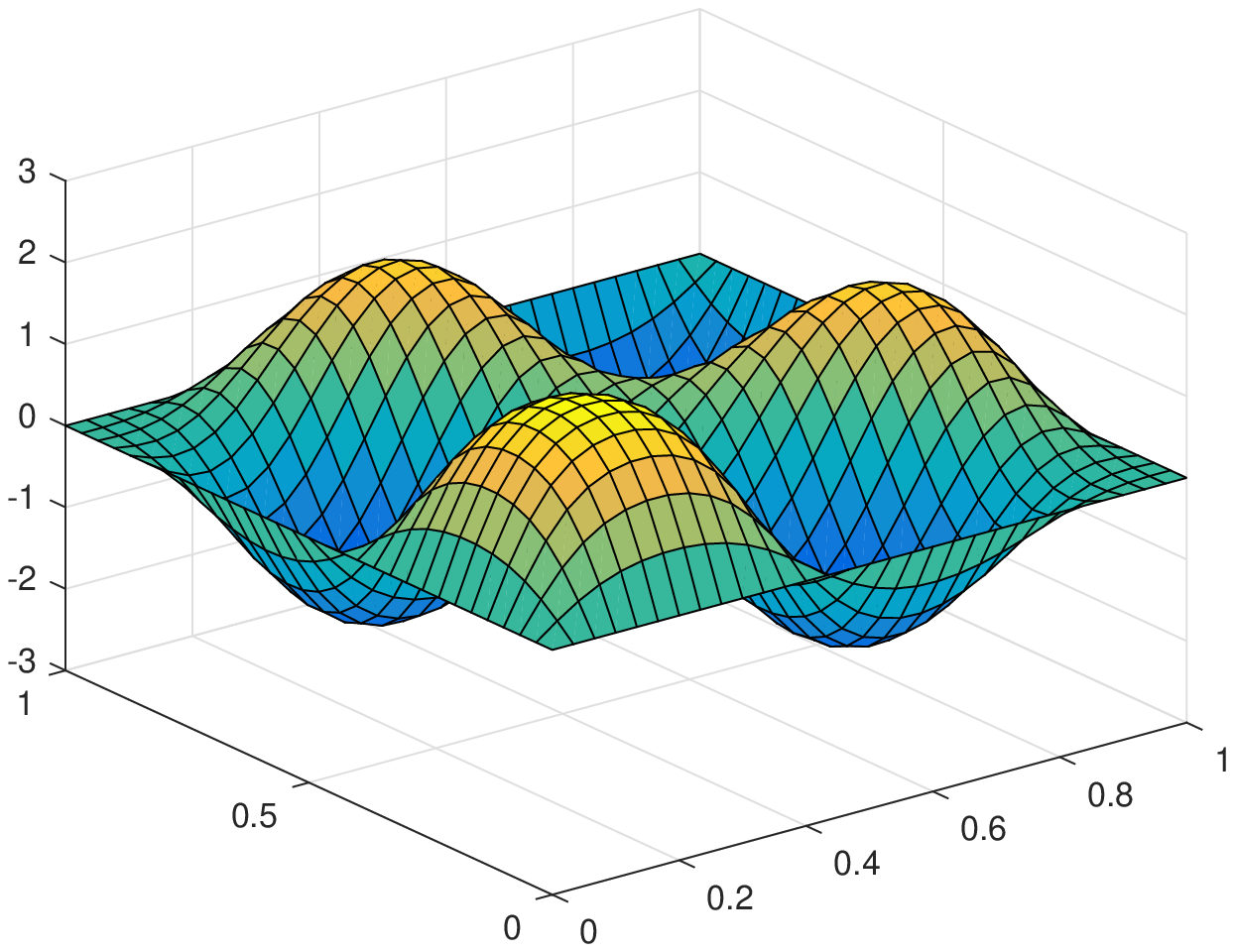}
        \caption{$\lambda = 108.82$}
        \end{minipage}
    }
    \subfigure
    {
        \begin{minipage}[b]{.3\linewidth}
        \centering
        \includegraphics[scale = 0.25]{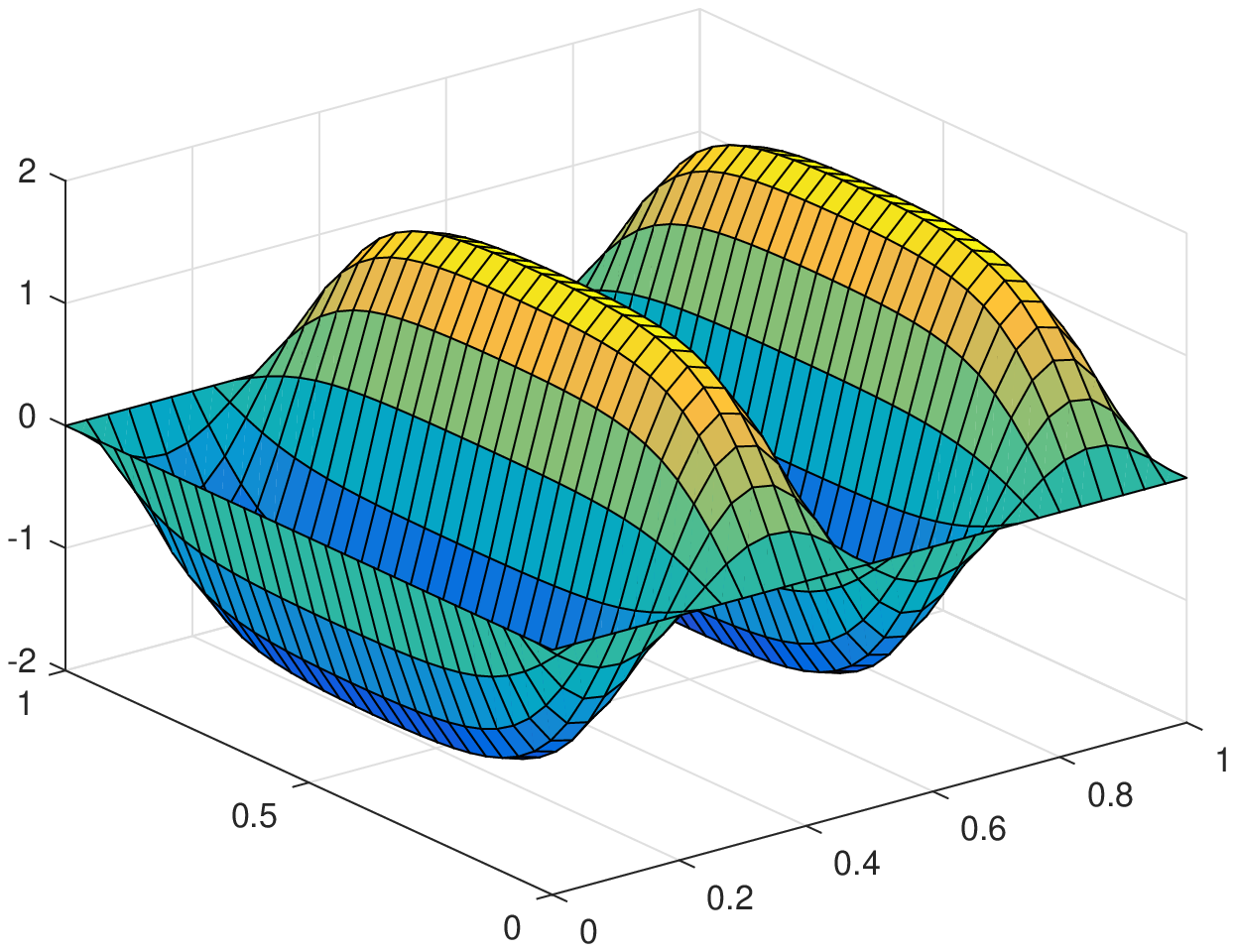}
        \caption{$\lambda = 120.47$}
        \end{minipage}
    }
\end{figure}
\vspace{-1cm}
\begin{figure}[H]
     \centering
     \subfigure
    {
        \begin{minipage}[b]{.3\linewidth}
        \centering
        \includegraphics[scale = 0.25]{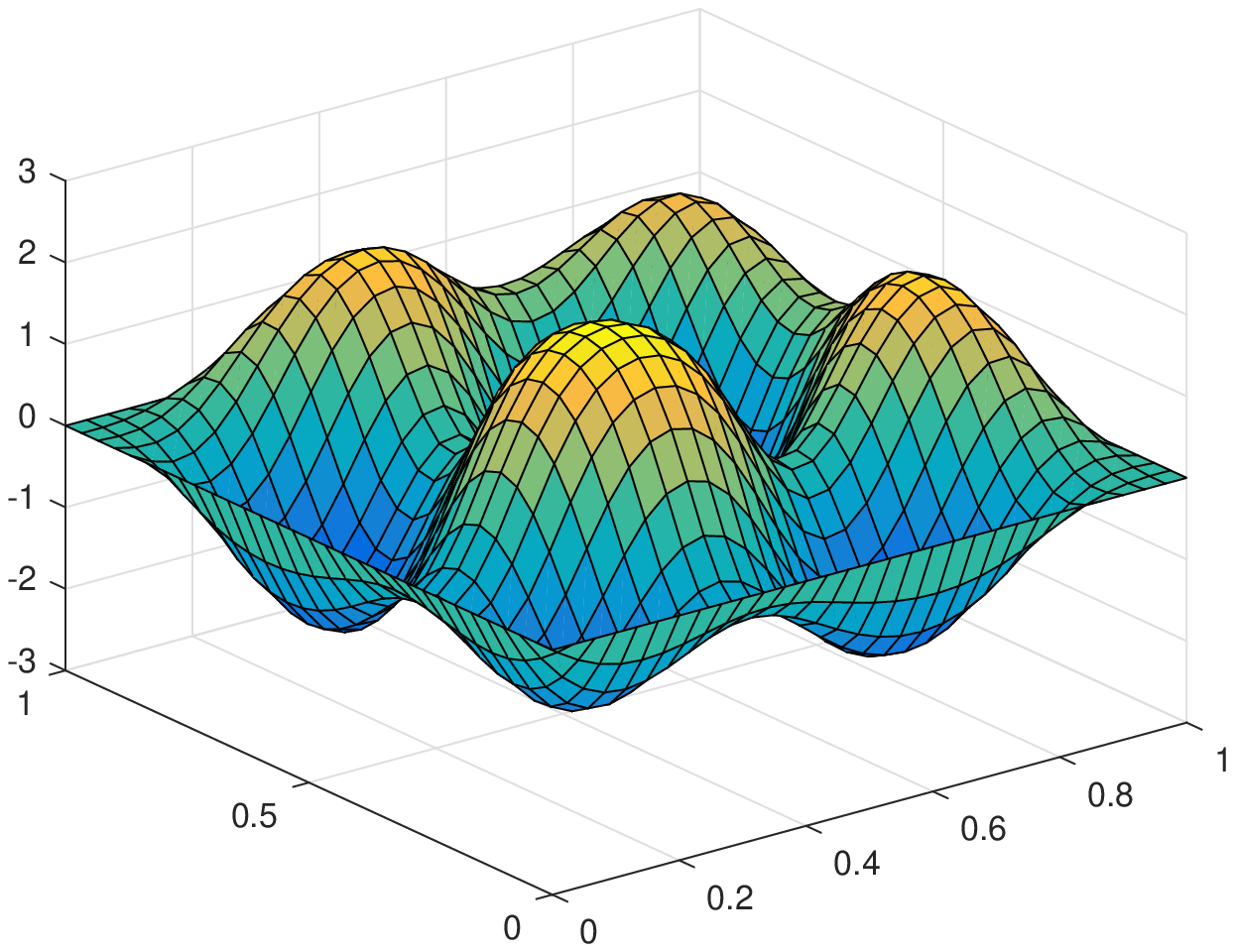}
        \caption{$\lambda = 129.67$}
        \end{minipage}
    }
    \subfigure
    {
        \begin{minipage}[b]{.3\linewidth}
        \centering
        \includegraphics[scale = 0.25]{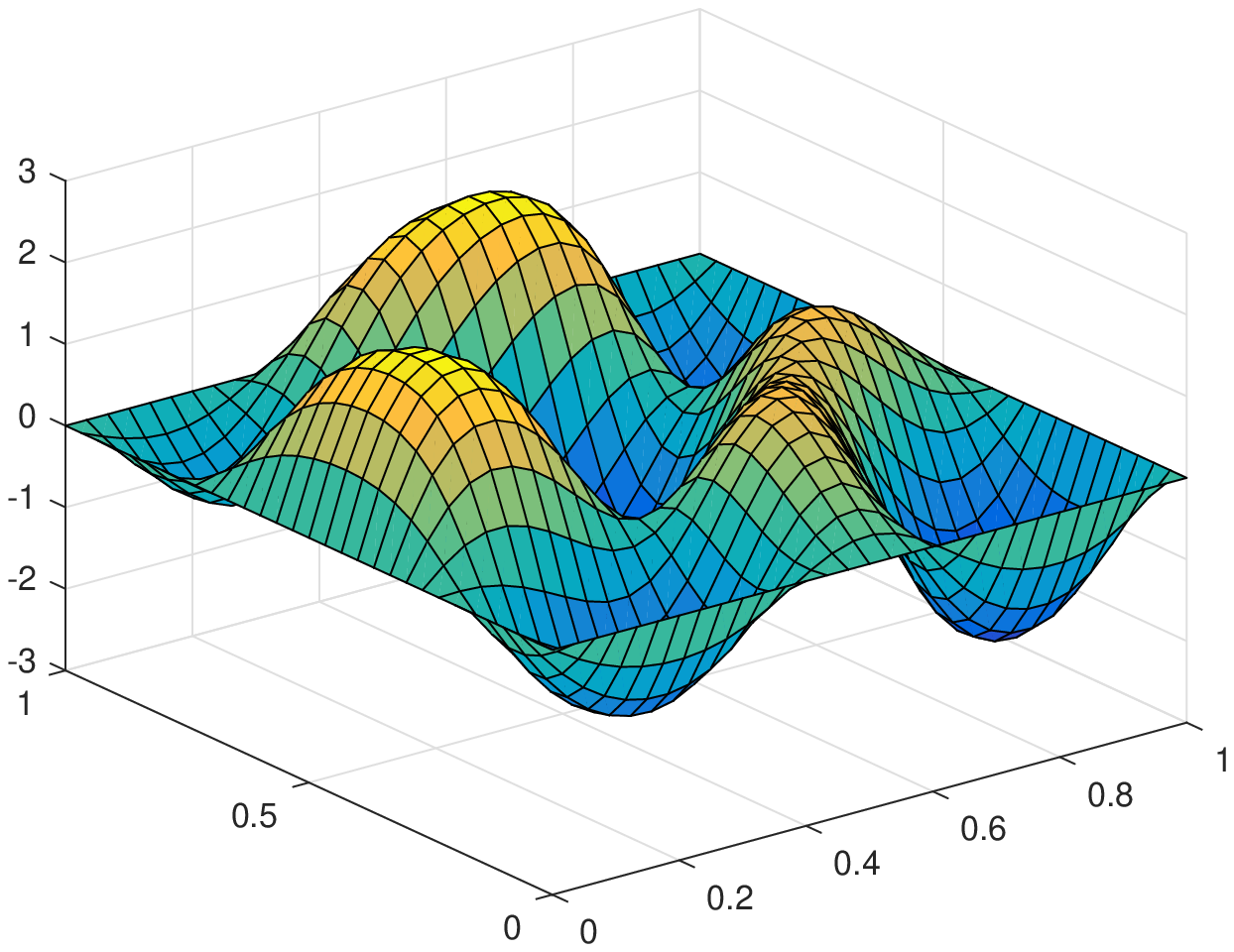}
        \caption{$\lambda = 133.81$}
        \end{minipage}
    }
    \subfigure
    {
        \begin{minipage}[b]{.3\linewidth}
        \centering
        \includegraphics[scale = 0.25]{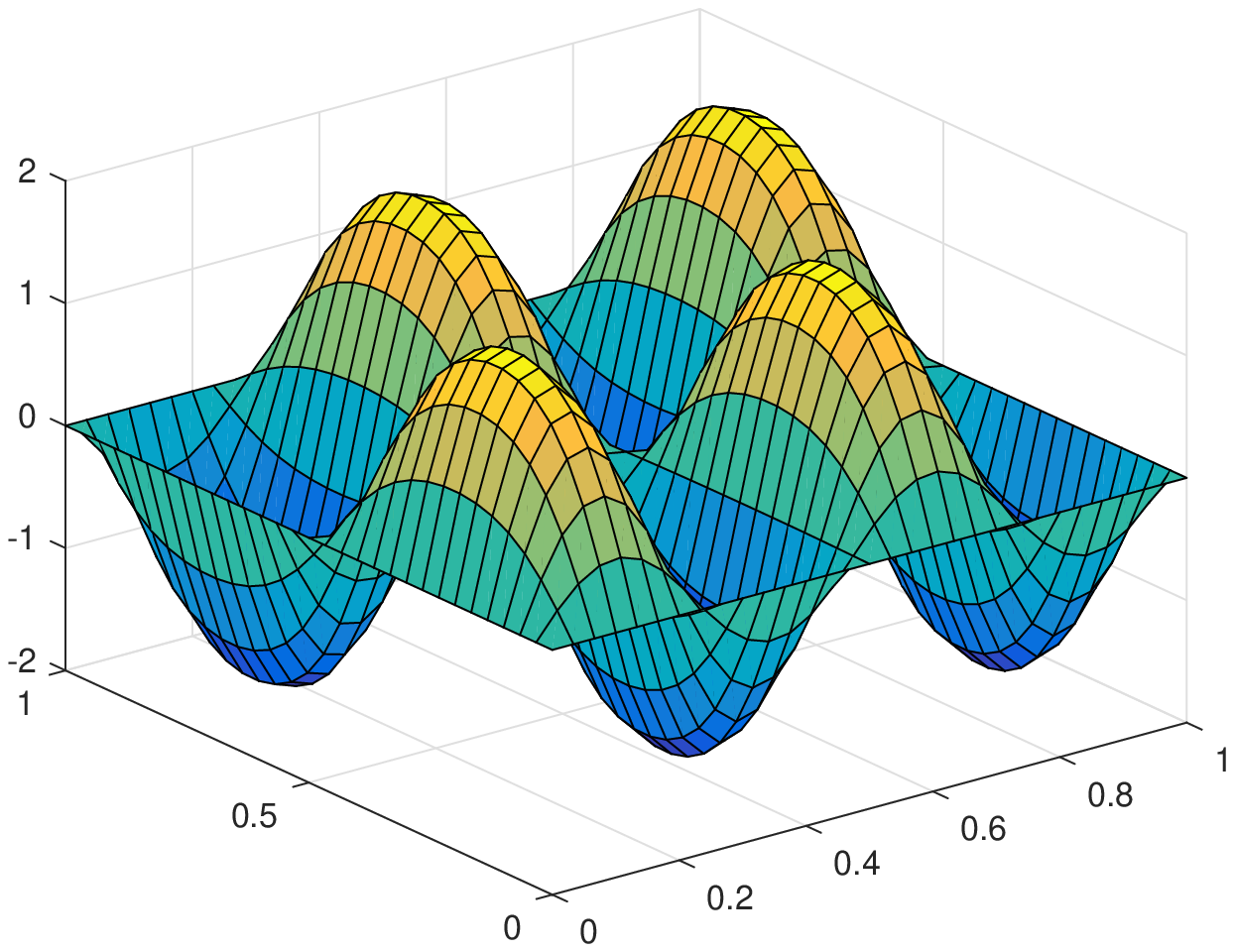}
        \caption{$\lambda = 138.68$}\label{sec4:fig-2D-12}
        \end{minipage}
    }
\end{figure}

\section{Conclusion}
Solutions to the discretized problem with the finite difference disretization for the GPE inherit certain properties of the solutions to the continuous problem, such as the existence and uniqueness of positive eigenvector (eigenfunction). The designed homotopy continuation methods are suitable for computing eigenpairs corresponding to excited states of high energy as well as the ground state and the first excited state. In order to make sure that the homotopy paths are regular and that the path following is efficient, artificial homotopy parameter and random matrices with certain structures in the homotopies seem indispensable.


\begin{thebibliography}{10}

\bibitem{BT}{\sc W. Bao and W. Tang},
{\em Ground state solution of Bose-Einstein condensate by directly minimizing the energy functional}, J. Comput. Phys., \textbf{187} (2003), 230--254.

\bibitem{CCJ}{\sc S.-L. Chang, H.-S. Chen, B.-W. Jeng, and C.-S. Chien},
{\em A spectral-Galerkin continuation method for numerical solutions of the Gross-Pitaevskii equation}, J. Comput. Appl. Math., \textbf{254} (2013), 2--16.

\bibitem{BD}{\sc W. Bao and Q. Du},
{\em Computing the ground state solution of Bose-Einstein condensates by a normalized gradient flow}, SIAM J. Sci. Comput., \textbf{25}(5) (2003), 1674--1697.

\bibitem{B}{\sc W. Bao and Y. Cai},
{\em Mathematical theory and numerical methods for Bose-Einstein condensation}, J. Kinetic \& Related Models., \textbf{6} (2013), 1--135.

\bibitem{MK}{\sc M. Edwards and K. Burnett},
{\em Numerical solution of the nonlinear schr\"{o}dinger equation for small samples of trapped neutral atoms}, Phys. Rev. A., \textbf{51} (1995), 1382--1386.

\bibitem{A}{\sc S. K. Adhikari},
{\em Numerical solution of the two-dimensional Gross-Pitaevskii equation for trapped interacting atoms}, Phys. Lett. A., \textbf{265} (2000), 91--96.

\bibitem{CC}{\sc S.-L. Chang and C.-S. Chien},
{\em Numerical continuation for nonlinear schr\"{o}dinger equations}, Int. J. Bifurcat. Chaos., \textbf{17} (2007), 641--656.

\bibitem{SCB}{\sc S.-L. Chang, C.-S. Chien, and B.-W. Jeng},
{\em Computing wave functions of nonlinear Schr\"{o}dinger equations: a time-independent approach}, J. Comput. Phys., \textbf{226} (2007), 104--130.

\bibitem{CKMe}{\sc Y. S. Choi, I. Koltracht, P. J. McKenna, and N. Savytska},
{\em Global monotone convergence of Newton iteration for a nonlinear eigen-problem}, Linear Algebra Appl., \textbf{357} (2002), 217--228.

\bibitem{CKM}{\sc Y. S. Choi, I. Koltracht, and P. J. McKenna},
{\em A generalizaion of the Perron-Frobenius theorem for nonlinear perturbations of stiltjes matrices}, Contemp. Math. AMS., \textbf{281} (2001), 325--330.

\bibitem{YLZ}{\sc Y. Cai, L.-H. Zhang, Z. Bai, and R.-C. Li},
{\em On an eigenvector-dependent nonlinear eigenvalue problem}, SIAM J. Matrix Anal. Appl., \textbf{39} (2018), 1360--1382.

\bibitem{SCF2}{\sc Y. Saad, J. R. Chelikowsky, and S. M. Shontz},
{\em Numerical methods for electronic structure
calculations of materials}, SIAM Rev., \textbf{52} (2010), 3--54.

\bibitem{JES}
{\sc E. Jarlebring, S. Kvaal, and W.Michiels}, {\em An inverse iteration method for eigenvalue problems with eigenvector nonlinearities}, SIAM J. Sci. Comput., \textbf{36} (2014), 1978--2001.

\bibitem{Yao-Zhou}{\sc  X. D. Yao and J. X. Zhou},
{\em Numerical methods for computing nonlinear eigenpairs: part I. Iso-homogeneous cases}, SIAM J. Sci. Comput., \textbf{29} (2007), 135--1374.

\bibitem{Yang-Huang-Liu}{\sc  Q. Z. Yang, P. F. Huang and Y. J. Liu},
{\em Numerical examples for solving a class of nonlinear eigenvalue problems}, Journal on Numerical Methods and Computer Applications, \textbf{40} (2019), 130--142.

\bibitem{Xie}{\sc  H. H. Xie},
{\em A multigrid method for nonlinear eigenvalue problems (in Chinese)}, Sci. Sin. Math., \textbf{45} (2015), 1193--1204.

\bibitem{TYA}{\sc T. Y. Li and N. H. Rhee},
{\em Homotopy algorithm for symmetric eigenvalue problems}, Numer. Math., \textbf{55} (1989), 265--280.

\bibitem{TZS}{\sc T. Y. Li, H. Zhang, and X. H. Sun},
{\em Parallel homotopy algorithm for the symmetric tridiagonal eigenvalue problems}, SIAM J. Sci. Stat. Comput., \textbf{12} (2006), 469--487.

\bibitem{LZC}{\sc T. Y. Li, Z. Zeng, and L. Cong},
{\em Solving eigenvalue problems of real nonsymmetric matrices with real homotopies}, SIAM J. Numer. Anal., \textbf{29} (1992), 229--248.

\bibitem{Lui-Keller-Kwok}{\sc S. H. Lui, H. B. Keller, and T. W. Kwok},
{\em Homotopy method for the large, sparse, real nonsymmetric eigenvalue problem}, SIAM J. Matrix Anal. Appl. \textbf{18} (1997), 312--333.

\bibitem{TAT}{\sc T. Y. Li And T. Sauer},
{\em Homotopy method for generalized eigenvalue problems Ax=$\lambda$Bx}, Linear Algebra. Appl., \textbf{91} (1987), 65--74.

\bibitem{CT}{\sc M. T. Chu, T. Y. Li, and T. Sauer},
{\em Homotopy method for general $\lambda$-matrix problems}, SIAM J. Matrix Anal. Appl., \textbf{9} (1988), 528--536.

\bibitem{Allgower}
{\sc E. L. Allgower and K. Georg}, {\em Introduction to Numerical Continuation Methods}, Classics in Applied Mathematics, 45, SIAM, Philadelphia, 2003.

\bibitem{Huang-Zeng-Ma}
{\sc X. D. Huang, Z. G. Zeng and Y. N. Ma}, {\em Theories and Methods of Nonlinear Numerical Analysis (in Chinese)}, Wuhan University Press, Wuhan, 2004.


\end{thebibliography}
\end{document}